\newcommand{\R}{\mathbb{R}} 
\newcommand{\exponente}{\vartheta}
\theoremstyle{theorem}
\newtheorem{thmintro}{Theorem}
\newtheorem{thmBp}{Theorem}
\newtheorem{thm}{Theorem}[section]
\newtheorem{prop}[thm]{Proposition}
\newtheorem{lem}[thm]{Lemma}
\newtheorem{cor}[thm]{Corollary}
\newtheorem{qs}[thm]{Question}
\theoremstyle{definition}
\newtheorem{defn}[thm]{Definition}
\newtheorem{rem}[thm]{Remark}
\newtheorem{ex}[thm]{Example}
\newcommand{\ger}{\widehat{\mathrm{Diff}}(\mathbb{K},0)}
\newcommand{\geri}{\widehat{\mathrm{Diff}_{\ell}}(\mathbb{K},0)}
\newcommand{\geriplus}{\widehat{\mathrm{Diff}}_{\ell + 1}(\mathbb{K},0)}
\begin{document}

\date{}
\author{H\'el\`ene Eynard-Bontemps \,\, \& \,\, Andr\'es Navas}

\title{On residues and conjugacies for germs of 1-D parabolic diffeomorphisms in finite regularity}
\maketitle

\noindent{\bf Abstract.} We study conjugacy classes of germs of non-flat diffeomorphisms of the real line fixing the origin. Based on the work of Takens and Yoccoz, we establish results that are sharp in terms of differentiability classes and order of tangency to the identity. The core of all of this lies in the invariance of residues under low-regular conjugacies. This may be seen as an extension of the fact (also proved in this article) that the value of the Schwarzian derivative at the origin for germs of $C^3$ parabolic diffeomorphisms is invariant under $C^2$ parabolic conjugacy, though it may vary arbitrarily under parabolic $C^1$ conjugacy.

\vspace{0.2cm}

\noindent{\bf Keywords:} diffeomorphism, conjugacy, residue, Schwarzian derivative, rigidity.

\vspace{0.2cm}

\noindent{\bf MSC-class:} 20F38, 37C25, 37E99. 

\vspace{0.5cm}

Several dynamical properties of maps or flows strongly depend on the behavior at fixed points or singularities of vector fields, respectively. Normal forms then become a crucial tool for their study. There is a huge literature in this regard for complex analytic maps. In particular, a complete classification in the 1-D case follows from the work of \'Ecalle \cite{ecalle}, Martinet and Ramis \cite{MR}, and Voronin \cite{voronin}. The real 1-D case was treated by Takens \cite{takens} in the $C^{\infty}$ setting and later studied by Yoccoz \cite{yoccoz} in finite regularity. It follows from their works that for a non-flat germ of diffeomorphism,  the associated vector field ({\em i.e.} the unique $C^1$ vector field whose time-1 map yields the germ), first constructed by Szekeres \cite{szekeres}, has the expected regularity ($C^{\infty}$ in case of a germ of smooth maps, $C^{k-1}$ for a germ of $C^k$ diffeomorphisms). This certainly has direct consequences for the conjugacy classes of the corresponding germs of diffeomorphisms, yet these are explicit only in \cite{takens}, with no mention in \cite{yoccoz}. This work arose from the necessity of clarifying this issue. More interestingly, some of our results go beyond a direct translation of those of Takens and Yoccoz,  and reveal several unexpected rigidity phenomena for low-regular conjugacies.

Most of this work will deal with germs of diffeomorphisms of the real line fixing the origin. To simplify, we will restrict the discussion to orientation-preserving (germs of) maps; extending the results/examples to the orientation-reversing case is straightforward. Moreover, we will slightly modify the notion of germ by considering only right neighborhoods of the origin. In other words, a {\em germ} in this setting will be an equivalence class of diffeomorphisms of intervals of the form $[0,\eta)$, with $\eta > 0$, where two maps are equivalent if they coincide on some interval  $[0,\eta')$, with $\eta' > 0$. Definitively, this definition is more adapted to the 1-D context; moreover, one can easily retrieve analogous results for the original notion starting from those in this setting.

In the framework above, an obvious conjugacy invariant is the value of the derivative at the fixed point. In the hyperbolic case, this value is a complete invariant of $C^{1+\tau}$ conjugacy. (The $C^1$ case is different and rather special; see \cite{eynard-navas-sternberg}.) For non-flat parabolic germs, the first invariant is the order of contact to the identity at the origin.  Although the sign of the corresponding coefficient in the Taylor series expansion is invariant under conjugacy by germs of orientation-preserving maps, its value is not, as it can be easily modified by conjugacies by homotheties. However, there is a number that yields another obstruction to conjugacy: the {\em residue} $\mathrm{Res}$. Together with its companion, the {\em iterative residue} $\mathrm{Resit}$,  this is a very well-known object in the complex setting, and extends to non-flat germs of  regular-enough (for instance, real-analytic) diffeomorphisms as follows: Start with a parabolic germ of the form 
 $$f(x) = x + \sum_{n \geq \ell+1} a_n \, x^n, \qquad \mbox{with } \, \, a_{\ell+1} \neq 0.$$
 Such a germ will be said to be {\em exactly $\ell$-tangent to the identity}. Moreover, $f$ will be said to be 
 (topologically) 
 {\em expanding} (resp. {\em contracting}) if the value of the coefficient $a_{\ell+1}$ above is positive (resp. negative). This is equivalent to that $f(x) > x$ (resp. $f(x) < x$) for all small-enough $x > 0$. A very well-known lemma claims that $f$ is conjugate by a germ of diffeomorphism to a germ of the (reduced) form 
$$x \mapsto x \pm x^{\ell+1} + \mu x^{2\ell + 1} + \sum_{n \geq 2\ell + 2} b_n \, x^n,$$
where the sign above is $+$ (resp. $-$) in the expanding (resp. contracting) case. Moreover, the value of $\mu$ is uniquely determined. One then defines 
$$\mathrm{Res} (f) := \mu \qquad \mbox{ and } \qquad \mathrm{Resit} (f) := \frac{\ell + 1}{2} - \mu.$$
By definition, these expressions are invariant under conjugacy. 

In \S \ref{section-morphisms}, we put this construction in a more algebraic perspective. In particular, we consider formal germs of diffeomorphisms at the origin (or changes of variables) on arbitrary ground fields. In this general setting, in \S \ref{subsection:resadd}, we introduce still another residue, that we call {\em additive residue}. For a germ $f$ as above, this is just defined as 
$$\mathrm{Resad}_{\ell} (f) := \frac{(\ell+1) \, a_{\ell+1}^2}{2} - a_{2\ell+1}.$$
This residue fails to be invariant under conjugacy; however, it has the (somewhat surprising and) very nice property of being a group homomorphism when restricted to the subgroup of parabolic germs having order of contact to the identity at least $\ell$. Using this, in \S \ref{section-resitad}, we give a very short and conceptual proof of the important  iterative formula for $\mathrm{Resit}$ first established by Écalle: for $n \neq 0$,
$$\mathrm{Resit}(f^n) = \frac{\mathrm{Resit}(f)}{|n|}.$$
An interesting relation with the classical Schwarzian derivative is presented in \S \ref{section-sc}.

\vspace{0.1cm}

The definition of residues easily extends to germs of $C^{2\ell+1}$ diffeomorphisms that are exactly $\ell$-tangent to the identity (see \S \ref{section-resitad}). It is not hard to see that $\mathrm{Res}$ and $\mathrm{Resit}$ are invariant under conjugacy by $C^{2\ell+1}$ diffeomorphisms. The first two main results of this work, stated and discussed in~\S \ref{section-statements}, establish that invariance actually holds for conjugacies by germs of $C^{\ell+1}$ diffeomorphisms, and that this result is sharp. 

\begin{thmintro}
\label{t:A}
Given $\ell \geq 1$, let $f,g$ be two parabolic germs in $\mathrm{Diff}^{2\ell + 1}_+(\mathbb{R},0)$ that are exactly $\ell$-tangent to the identity. If $f$ and $g$ are conjugated by a germ in $\mathrm{Diff}^{\ell + 1}_+ (\mathbb{R},0)$, then they have the same (iterative) residue.
\end{thmintro}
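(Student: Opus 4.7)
The plan is to approximate the $C^{\ell+1}$ conjugacy $h$ by its Taylor polynomial of degree $\ell+1$ at the origin, thereby producing a smooth diffeomorphism that conjugates $f$ to $g$ up to an error of order $o(x^{2\ell+1})$, and then to appeal to the easier $C^{2\ell+1}$-invariance of the residue mentioned just before the theorem. First, by successive composition with polynomial changes of variable, I would bring both germs to the semi-normal form
$$f(x) = x + \varepsilon\, x^{\ell+1} + \mu_f\, x^{2\ell+1} + o(x^{2\ell+1}), \qquad g(x) = x + \varepsilon\, x^{\ell+1} + \mu_g\, x^{2\ell+1} + o(x^{2\ell+1}),$$
with the same $\varepsilon \in \{\pm 1\}$ and vanishing intermediate coefficients $a_{\ell+2},\dots,a_{2\ell}$. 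These $C^\infty$ reductions preserve both the residue and the hypothesis of $C^{\ell+1}$-conjugacy; after them, $\mathrm{Res}(f)=\mu_f$ and $\mathrm{Res}(g)=\mu_g$, so it suffices to show $\mu_f=\mu_g$.

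Next, I would decompose $h = P + r$, where $P$ is the degree-$(\ell+1)$ Taylor polynomial of $h$ at $0$. Then $r\in C^{\ell+1}$ vanishes at $0$ together with its first $\ell+1$ derivatives, and iterated Taylor expansion yields the pointwise estimates $r^{(k)}(x)=o(x^{\ell+1-k})$ for $0\leq k\leq \ell+1$. Starting from the identity $h\circ f=g\circ h$ and substituting $h = P+r$, one finds
$$P\circ f - g\circ P \;=\; \bigl(g\circ(P+r)-g\circ P\bigr) - r\circ f.$$
Using $g'(y)=1+O(y^\ell)$ together with $r(x)=o(x^{\ell+1})$, the first parenthesis equals $r + o(x^{2\ell+1})$; using the mean-value theorem with $r'(\xi)=o(x^\ell)$ and $f(x)-x=O(x^{\ell+1})$, one also has $r\circ f = r + o(x^{2\ell+1})$. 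Combining these,
$$P\circ f \;=\; g\circ P + o(x^{2\ell+1}).$$

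Since $P$ is polynomial and $f,g\in C^{2\ell+1}$, both sides admit Taylor expansions to order $2\ell+1$. Rewriting the above as $f = P^{-1}\circ g\circ P + o(x^{2\ell+1})$, the $C^{2\ell+1}$ germ $\widetilde g := P^{-1}\circ g\circ P$ has the same $(2\ell+1)$-jet as $f$, hence $\mathrm{Res}(\widetilde g)=\mathrm{Res}(f)$. But $\widetilde g$ is $C^\infty$-conjugate to $g$, so the $C^{2\ell+1}$-invariance of the residue gives $\mathrm{Res}(\widetilde g)=\mathrm{Res}(g)$. Altogether, $\mathrm{Res}(f)=\mathrm{Res}(g)$.

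The main obstacle is the improvement from the trivial bound $P\circ f - g\circ P = o(x^{\ell+1})$ (immediate from $h\in C^{\ell+1}$) to the sharp bound $o(x^{2\ell+1})$. This gain of an extra factor of $x^\ell$ reflects a subtle interplay between two sources of vanishing---the tangency order of $f$ and $g$ to the identity on the one hand, and the order of vanishing of the derivatives of the remainder $r$ at the origin on the other---and is precisely what underlies the survival of the residue down to conjugacies of regularity $C^{\ell+1}$.
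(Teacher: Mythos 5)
Your proof is correct, and it takes a genuinely different --- and more elementary --- route than the paper. Both of the paper's proofs pass through the dynamics: the first uses the Szekeres--Takens--Yoccoz generating vector field $X$ (whose $(2\ell+1)$-jet encodes $\mathrm{Resad}_{\ell}(f)$, formula (\ref{eq-fnvf})) together with the fact that a $C^1$ conjugacy of the diffeomorphisms must conjugate the vector fields, and then identifies coefficients in the first-order relation $X\cdot Dh=Y\circ h$; the second extracts $\mathrm{Resit}(f)$ from the logarithmic deviation of the orbit $f^n(x_0)$ from $(a\ell n)^{-1/\ell}$ (Proposition \ref{prop:deviation}) and compares orbits via $hf^n=g^nh$. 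The paper even remarks that a direct proof using only the un-iterated conjugacy relation ``gets stuck'' --- and the naive version does, because writing $h(x)=x+cx^{\ell+1}+o(x^{\ell+1})$ and expanding $h\circ f=g\circ h$ leaves an $o(x^{\ell+1})$ error that swamps the $x^{2\ell+1}$ coefficient. Your decomposition $h=P+r$ circumvents exactly this: the uncontrolled remainder $r$ enters the identity $P\circ f-g\circ P=\bigl(g\circ h-g\circ P\bigr)-r\circ f$ twice and cancels to order $o(x^{2\ell+1})$, since $g'(\xi)-1=O(x^{\ell})$ damps $r=o(x^{\ell+1})$ on one side while $r'(\eta)=o(x^{\ell})$ against $f(x)-x=O(x^{\ell+1})$ damps it on the other; both estimates are legitimate (one-sided Taylor--Peano expansion plus the mean value theorem on $[0,\eta)$), and the resulting conjugacy of $(2\ell+1)$-jets by the polynomial $P$ reduces the theorem to the formal invariance of the residue already contained in Lemma \ref{lem-basic}. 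Your bookkeeping also makes the threshold transparent: with $h$ only $C^{\ell}$ the same estimates yield merely $o(x^{2\ell})$, consistent with the sharpness exhibited by Theorem \ref{t:B}. What the paper's route buys is machinery reused elsewhere (the residue/vector-field dictionary needed for Theorems \ref{t:B} and \ref{t:C}, and the orbit asymptotics); what yours buys is independence from Yoccoz's regularity theorem for the generating vector field, which is the deepest external input of the paper's first proof.
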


\begin{thmintro}
\label{t:B}
Given $\ell \geq 1$, let $f,g$ be two parabolic germs in $\mathrm{Diff}^{2\ell + 1}_+(\mathbb{R},0)$ 
that are exactly $\ell$-tangent to the identity. If they are both expanding or both contracting,  
then they are conjugated by a germ in $\mathrm{Diff}^{\, \ell}_+ (\mathbb{R},0)$.
\end{thmintro}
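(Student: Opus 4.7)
I would proceed in three stages: reduce to matching leading coefficients, embed $f$ and $g$ in vector field flows, and then construct the conjugacy via time functions. First, since the homothety $x \mapsto \lambda x$ is a smooth germ whose conjugation action on an exactly $\ell$-tangent germ multiplies $a_{\ell+1}$ by $\lambda^\ell$, I may assume $a_{\ell+1}(f) = a_{\ell+1}(g) = 1$ (treating the expanding case; contracting is analogous). Then, by the Szekeres--Yoccoz theorem recalled in the introduction, $f = \phi^{X_f}_1$ and $g = \phi^{X_g}_1$ for $C^{2\ell}$ vector fields
\[
X_f(x) = x^{\ell+1}\bigl(1+p_f(x)\bigr), \qquad X_g(x) = x^{\ell+1}\bigl(1+p_g(x)\bigr),
\]
with $p_f, p_g \in C^{\ell-1}$ vanishing at $0$. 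Any germ $h$ satisfying $h_\ast X_f = X_g$ automatically satisfies $h \circ f = g \circ h$, so it is enough to build such an $h$ of class $C^\ell$.

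To construct it, fix a small $x_0 > 0$ and set
\[
T_f(x) := \int_{x_0}^x \frac{du}{X_f(u)}, \qquad T_g(y) := \int_{x_0}^y \frac{dv}{X_g(v)}.
\]
Both are $C^{2\ell+1}$ diffeomorphisms of some $(0,\eta)$ onto a half-line $(-\infty,c)$ (tending to $-\infty$ at the origin because the integrand blows up like $u^{-\ell-1}$), and the candidate $h := T_g^{-1} \circ T_f$ satisfies $h_\ast X_f = X_g$ on $(0,\eta)$ by direct differentiation. Everything then reduces to showing that $h$ extends as a $C^\ell$ diffeomorphism through $0$.

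The heart of the proof is this asymptotic analysis. Expanding $1/X_f(u) = u^{-\ell-1}(1+p_f(u))^{-1}$ and integrating term by term, one aims for an expansion
\[
T_f(x) = -\frac{1}{\ell x^\ell} + \sum_{j=1}^{\ell-1} \frac{\alpha_j(f)}{x^j} + \nu(f)\log\frac{1}{x} + C_f + \rho_f(x),
\]
where $\rho_f(x) \to 0$ sufficiently rapidly as $x \to 0^+$, and analogously for $T_g$. The normalization of the first step makes the $-1/(\ell x^\ell)$ terms coincide, while a direct computation should identify $\nu(f)$ with a nonzero scalar multiple of $\mathrm{Res}(f)$, consistent with Theorem~A. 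Inverting $T_g$ and composing then yields
\[
h(x) - x \;=\; \tfrac{\nu(f)-\nu(g)}{\ell}\,x^{\ell+1}\log\tfrac{1}{x} \;+\; (\text{remainder of class } C^{\ell+1}).
\]
Since $x^{\ell+1}\log(1/x)$ is of class $C^\ell$ but not $C^{\ell+1}$, this yields $h \in \mathrm{Diff}_+^{\ell}(\mathbb{R},0)$, and the regularity is sharp, in line with Theorem~A.

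The main obstacle will be the precision of this asymptotic analysis. The Szekeres--Yoccoz regularity $X_f \in C^{2\ell}$ (equivalently $p_f \in C^{\ell-1}$) is by itself too weak to cleanly extract the logarithmic coefficient or to control the non-logarithmic remainder in $C^{\ell+1}$. One has to complement it with the formal data encoded in $f \in C^{2\ell+1}$: through the relation $f = \exp X_f$, the Taylor coefficients of $X_f$ up to order $2\ell+1$ are uniquely determined, even though the a priori regularity of $X_f$ stops at $2\ell$. It is in handling this one-derivative gap that the hypothesis $f, g \in C^{2\ell+1}$ is essentially used.
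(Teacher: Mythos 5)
Your overall route---pass to the Szekeres--Yoccoz vector fields and build the conjugacy as $h=T_g^{-1}\circ T_f$ from the time functions $\int 1/X$---is exactly the one the paper takes (it conjugates each field to the model $x^{\ell+1}$ rather than $X_f$ to $X_g$ directly, but this is the same computation). The problem is the precision you claim for the asymptotic expansion. You assert that after matching leading coefficients one gets
$$h(x)-x=\tfrac{\nu(f)-\nu(g)}{\ell}\,x^{\ell+1}\log\tfrac1x+(\text{remainder of class }C^{\ell+1}),$$
with $\nu$ a multiple of the residue. This is false under the hypothesis $f,g\in C^{2\ell+1}$: \S\ref{section-ejemplo-helene} of the paper exhibits two expanding $C^{3}$ germs ($\ell=1$), exactly $1$-tangent to the identity, \emph{with equal (vanishing) iterative residues}, whose only conjugacies are $x\mapsto x+x^2\log(\log x)$ up to flow elements --- hence $C^1$ but not $C^2$. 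If your decomposition held, equality of residues would kill the logarithmic term and force a $C^{\ell+1}$ conjugacy, contradicting that example. In other words, with vector fields that are only $C^{2\ell}$ (plus $2\ell+1$ derivatives at the origin), the ``remainder'' $\rho_f$ in $T_f$ and its contribution to $h$ can only be controlled in $C^{\ell}$, not $C^{\ell+1}$; isolating the $\log$-term with a $C^{\ell+1}$ error genuinely requires one more derivative on the germs (this is the content of Theorem~\ref{t:C}, which assumes $C^{2\ell+2}$ at least).

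The fix is to aim lower: for Theorem~\ref{t:B} you never need to see the logarithm or the residue at all. The paper's Proposition~\ref{prop:CS} reduces $X$ to the form $x^{\ell+1}/(1+\epsilon(x))$ with $\epsilon$ of class $C^{r-1}$ and $(r-1)$-flat ($r\le\ell$), writes $h(x)=x\bigl(1-\ell x^{\ell}u(x)\bigr)^{-1/\ell}$ with $u(x)=\int_1^x\epsilon(y)y^{-\ell-1}dy+c$, and then proves directly --- via the elementary Lemmas~\ref{l:taylor} and~\ref{l:debile} and the flatness of $\epsilon$ --- that $x\mapsto x^{\ell+1}u(x)$ is $C^{r}$, hence $h$ is $C^{r}$. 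That is also the part of the work your proposal defers (``the main obstacle will be the precision of this asymptotic analysis''): the actual derivative estimates on the remainder are the proof, and as written your target for them is unattainable. Recalibrate the goal to a $C^{\ell}$ (not $C^{\ell+1}$) control of $h-x$ and the plan becomes the paper's argument.
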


Roughly, these results summarize (and are nicely complemented) as follows:

\vspace{0.1cm}

\noindent -- Two non-flat parabolic germs of (real 1-D) diffeomorphisms are conjugate by the germ of (orientation-preserving) $C^1$ diffeomorphisms if and only if they have the same order of contact to the identity and the first (higher-order) nonzero coefficients of the Taylor series expansions have the same sign (this goes back to Firmo \cite{firmo});

\vspace{0.1cm}

\noindent -- Any $C^1$ conjugacy is automatically of class $C^{\ell}$ if the order of  tangency above is $\ell$ and the germs are of class $C^{2\ell + 1}$;

\vspace{0.1cm}

\noindent -- An obstruction to improve the differentiability of the conjugacy above is given by the iterative residue: for $C^{2\ell + 1}$ diffeomorphisms that are exactly $\ell$-tangent to the identity, this value is invariant under $C^{\ell + 1}$ conjugacy.

\vspace{0.1cm}

After discussing some relevant examples and heuristic arguments in \S \ref{section-statements}, a complete proof of Theorem \ref{t:A} (resp. Theorem \ref{t:B}) is presented in \S \ref{section-proofs} (resp. \S \ref{section-parte2}). Actually, Theorem \ref{t:B} is proved in the following extended form:

\begin{thmBp} 
\label{t:Bp} 
Let $1 \leq r \leq \ell$, and let $f,g$ be two parabolic germs in $\mathrm{Diff}^{\ell + 1 + r}_+(\mathbb{R},0)$ that are exactly $\ell$-tangent to the identity. If they are both expanding or both contracting, then they are conjugated by a germ in $\mathrm{Diff}^{\, r}_+ (\mathbb{R},0)$.
\end{thmBp}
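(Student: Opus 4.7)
My plan is to transfer the conjugacy problem from $f,g$ to their Szekeres vector fields $X_f, X_g$. By the Takens--Yoccoz theorem recalled in the introduction, these are of class $C^{\ell+r}$, each vanishing to order exactly $\ell+1$ at $0$, both with $(\ell+1)$-jets of the same sign (say positive, i.e.\ expanding; the contracting case is symmetric). Since $f=\Phi^{X_f}_1$ and $g=\Phi^{X_g}_1$, any germ of $C^r$ diffeomorphism $h$ with $h(0)=0$ satisfying $h_*X_f=X_g$ automatically satisfies $h\circ f=g\circ h$, so it suffices to produce such an $h$.

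\textbf{Step 1: jet normalization.} Precomposing with homotheties, I may assume the $(\ell+1)$-jets of $X_f$ and $X_g$ both equal $x^{\ell+1}$. A direct computation shows that conjugating a vector field whose $(\ell+1)$-jet is $x^{\ell+1}$ by the polynomial diffeomorphism $x\mapsto x+\beta\,x^k$ shifts the Taylor coefficient at $x^{\ell+k}$ by $(k-\ell-1)\,\beta$, modulo corrections depending only on strictly lower-order coefficients. Since $2\le k\le r\le\ell$ keeps $k-\ell-1\neq 0$, I can iteratively and independently adjust on $X_f$ and on $X_g$ all Taylor coefficients of orders $\ell+2,\ldots,\ell+r$. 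The normalizing changes of variables being polynomial (hence smooth), the vector fields remain of class $C^{\ell+r}$, and after these operations I may assume
\[
X_f(x)-X_g(x)\;=\;o(x^{\ell+r})\qquad\text{as }x\to 0^+.
\]
This is the crucial step where the assumption $r\le\ell$ enters: the critical exponent $k=\ell+1$, at which the residue obstruction of Theorem~\ref{t:A} lives, is kept out of reach of the normalizing polynomial diffeomorphisms.

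\textbf{Step 2: construction via straightening, and regularity at $0$.} Fix $x_0>0$ small, and define the time coordinates $\tau_i(x):=\int_{x_0}^x dt/X_i(t)$ for $i\in\{f,g\}$. Each $\tau_i$ straightens the corresponding flow to a translation ($\tau_i\circ\Phi^{X_i}_s=\tau_i+s$), so $h:=\tau_g^{-1}\circ\tau_f$ is a smooth diffeomorphism of $(0,\rho)$ onto $(0,\rho')$ conjugating $f$ to $g$. Writing $h(x)=x+\delta(x)$, the defining relation $\tau_g(x+\delta(x))=\tau_f(x)$ together with Hadamard's lemma yields the implicit expression
\[
\delta(x)\;=\;X_g(x)\,\bigl[\tau_f(x)-\tau_g(x)\bigr]+O\bigl(\delta(x)^2\bigr),
\]
in which $\tau_f(x)-\tau_g(x)=\int_{x_0}^x\frac{X_g(t)-X_f(t)}{X_f(t)\,X_g(t)}\,dt$. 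The Step~1 matching and the estimate $X_f(t)\,X_g(t)\sim t^{2\ell+2}$ give an integrand of size $o(t^{r-\ell-2})$, which integrates to $o(x^{r-\ell-1})$, whence $\delta(x)=o(x^r)$. Iterating the same kind of control on the successive derivatives of $\delta$ produces continuous extensions of $\delta,\delta',\ldots,\delta^{(r)}$ to $x=0$, all vanishing there, so that $h$ is a germ in $\mathrm{Diff}^r_+(\mathbb{R},0)$, as required.

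\textbf{Main obstacle.} The heart of the argument is the derivative-by-derivative regularity analysis of $\delta$ at the singular fixed point in Step~2. The bound $r\le\ell$ is essential on both sides: in Step~1 it keeps the normalizing changes of variables below the residue exponent, and in Step~2 it makes the asymptotic estimates on $\tau_f-\tau_g$ and its derivatives compatible with $C^r$-regularity at $0$. Pushing the method to $r=\ell+1$ would precisely bring in a logarithmic contribution encoding the iterative residue, in perfect agreement with the sharpness statement of Theorem~\ref{t:A}.
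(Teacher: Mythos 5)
Your overall strategy is the paper's: pass to the Szekeres vector fields (of class $C^{\ell+r}$ by Takens--Yoccoz), normalize the Taylor coefficients of orders $\ell+2,\dots,\ell+r$ by polynomial conjugacies (possible precisely because $r\le\ell$ keeps these orders away from the resonant exponent $2\ell+1$), and then build the conjugacy from the time coordinates $\tau=\int dt/X$. Your Step 1 is exactly Lemma \ref{l:reduc}, and your zeroth-order estimate $\delta(x)=o(x^r)$ in Step 2 is correct. The problem is that the entire difficulty of the theorem is concentrated in the sentence ``Iterating the same kind of control on the successive derivatives of $\delta$ produces continuous extensions of $\delta,\delta',\dots,\delta^{(r)}$ to $x=0$'': this is asserted, not proved, and it is not a routine iteration of the first estimate. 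Differentiating the defining relation gives $Dh=(X_g\circ h)/X_f$, and each further differentiation entangles $D^jX_g\circ h$ with lower derivatives of $h$ through Fa\`a di Bruno; since $\tau_g^{-1}$ has no closed form, you have no explicit expression for $\delta$ from which to read off the behaviour of $\delta^{(j)}$ near the degenerate fixed point, and a bound $\delta=o(x^r)$ on a function says nothing by itself about its derivatives.

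The paper's proof resolves exactly this point by a structural choice you forgo: instead of conjugating $X_f$ directly to $X_g$, it conjugates each field to the exact monomial model $X_\ell(x)=x^{\ell+1}$, whose time coordinate $-1/(\ell x^\ell)$ is explicitly invertible. This yields the closed formula $h(x)=x\,(1-\ell\,x^\ell u(x))^{-1/\ell}$ with $u(x)=\int_1^x \epsilon(y)y^{-\ell-1}dy+c$ and $\epsilon$ an $(r-1)$-flat $C^{r-1}$ error term, after which the $C^r$ regularity at $0$ is established by an explicit computation of $D^{(r-1)}(x^\ell u(x))$ term by term, together with Lemma \ref{l:taylor} (division of flat functions by powers of $x$) and Lemma \ref{l:debile} (to handle the fractional power $(\cdot)^{-1/\ell}$). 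To complete your argument you would either need to reproduce this reduction to the monomial model, or supply a genuine inductive bootstrap for the ODE $Dh=(X_g\circ h)/X_f$ showing that $x^{\,j}$ times the $j$-th derivative of the relevant error terms tends to $0$ for all $j\le r$; as written, Step 2 is a statement of the goal rather than a proof of it.
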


To close this work, we study the regularity of the conjugacies in case of coincidence of residues. For germs of $C^{\infty}$ diffeomorphisms with the same order of tangency to the identity, Takens proved  that they are $C^{\infty}$ conjugate provided they have the same iterative residue. Our last main result is a version of this phenomenon  in finite regularity. Our proof strongly follows his arguments (and allows retrieving his result without using Borel's lemma; see Remark \ref{r:final}).
%This allows retrieving Takens' result (yet our proof strongly follows his arguments).

\begin{thmintro}  
\label{t:C}
Let $r \geq \ell + 1$ and let $f,g$ be germs of $C^{\ell+1+r}$ diffeomorphisms, both expanding or both contracting, that are exactly $\ell$-tangent to the identity, with $\ell \geq 1$. If $f$ and $g$ have the same (iterative) residue, then they are $C^{\ell+1}$ conjugate, and such a conjugacy is automatically 
a (germ of) $C^r$ diffeomorphism. 
%\marginpar{\textcolor{red}{but not necessarily $C^{r+1}$ ???}}
\end{thmintro}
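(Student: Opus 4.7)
The plan is to follow Takens' strategy from \cite{takens}, with two adaptations dictated by the finite-regularity setting: replacing Borel's lemma by a finite polynomial pre-normalization, and using the Szekeres vector field machinery of Yoccoz \cite{yoccoz} as both the construction device and the regularity engine. The argument splits naturally into three steps.

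First, I would reduce by polynomial conjugacies to the case where $f$ and $g$ coincide with a common polynomial model $F_0$ at the origin up to order $\ell + 1 + r$. This step is purely algebraic: after a homothety normalizing the leading non-trivial coefficient to $\pm 1$, the equality of iterative residues cancels the (only) obstruction at order $2\ell + 1$, and subsequent degree-by-degree polynomial conjugacies kill the higher-order coefficients one at a time, each step affording itself of the algebraic framework of \S \ref{section-morphisms} and staying inside the $C^{\ell + 1 + r}$ class. After this reduction, $f - g$ vanishes to order at least $\ell + 1 + r$ at $0$.

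Second, I would construct a $C^{\ell+1}$ conjugacy $h$ using the Szekeres vector fields $X_f, X_g$, which by Yoccoz are of class $C^{\ell + r}$ with common leading jet $\mp x^{\ell+1}/\ell + \cdots$. Fix a small fundamental interval $[a, f(a)]$, prescribe $h$ on it as a $C^{\ell+1}$ diffeomorphism onto $[a, g(a)]$ whose jets at both endpoints are tuned to make $h \circ f = g \circ h$ valid to order $\ell+1$ at the boundary, and then extend $h$ to a full right-neighborhood of $0$ via $h \circ f^n = g^n \circ h$, with $h(0) := 0$. The combination of the polynomial-rate contraction of $f^{-n}$ towards $0$ and the high-order flatness of $f - g$ at $0$ from Step~1 ensures convergence of the derivatives of $h$ up to order $\ell + 1$ as $x \to 0^+$.

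Third, for the automatic upgrade, given any $C^{\ell+1}$ conjugacy $h$, I would first show that $h$ conjugates the Szekeres vector fields: the pushforward $h_{\ast} X_f$ is a $C^\ell$ vector field whose time-$1$ flow is $g$, and uniqueness of the Szekeres vector field in this regularity class forces $h_{\ast} X_f = X_g$. The resulting first-order ODE
\[
 h'(x) \, X_f(x) \,=\, X_g(h(x))
\]
has $C^{\ell + r}$ coefficients away from $0$, so standard bootstrap gives $h \in C^{\ell + r + 1}$ on any punctured neighborhood of $0$; at $0$, a Taylor expansion of $X_f, X_g$ together with the hypothesis $r \geq \ell + 1$ extracts exactly the claimed $C^r$ regularity. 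The main obstacle throughout is the degeneration at the origin: all quantitative estimates (polynomial normalization, fundamental-domain convergence, and ODE bootstrap) must track how many derivatives survive as $x \to 0^+$. This is precisely where the finite-order polynomial matching replaces Borel's lemma, since only finitely many Taylor coefficients (up to order $\ell + 1 + r$) need to be aligned for the subsequent flow and ODE arguments to close.
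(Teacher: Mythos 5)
Your Step 1 is fine and matches in spirit the paper's reduction (Lemma \ref{l:reduc2}, done there at the level of vector fields): equality of the residues is exactly what lets the degree-by-degree polynomial normalization produce a common $(\ell+1+r)$-jet. The genuine gap is in Step 2. The assertion that a fundamental-domain prescription of $h$ on $[a,f(a)]$, with jets merely matched at the endpoints, becomes $C^{\ell+1}$ at the origin because $f-g$ is flat to order $\ell+1+r$ is false. Test it with $f=g$, where $f-g\equiv 0$ is as flat as one could wish: any $C^{\ell+1}$ map $\phi$ of the fundamental domain that agrees with the identity to order $\ell+1$ at the endpoints (say, the identity near the endpoints with a bump in the middle) extends equivariantly to a germ commuting with $f$ and smooth away from $0$; if its derivatives up to order $\ell+1$ converged at $0$ it would be a $C^1$ centralizer element, hence by Kopell's lemma a member of the Szekeres flow, which it is not. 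So flatness of $f-g$ cannot by itself "ensure convergence of the derivatives": unlike the hyperbolic case, parabolic orbits converge only polynomially ($f^n(x_0)\sim(a\ell n)^{-1/\ell}$), and the mismatch between an arbitrary prescription and the canonical conjugacy does not get damped under the extension $h\circ f^n=g^n\circ h$. One must instead construct a specific conjugacy — in the paper, each vector field $X,Y$ (of class $C^{\ell+r}$, with $\ell+1+r$ derivatives at $0$ by Yoccoz) is conjugated to the common Takens normal form $x\mapsto x^{\ell+1}+\mu x^{2\ell+1}$ — and the entire analytic content is then proving that this specific conjugacy is $C^r$ \emph{at} the degenerate fixed point (Proposition \ref{p:takens-fini}: a path/homotopy method reducing to a linear ODE (\ref{e:EDL}) with parameter, plus the flatness bookkeeping of Lemmas \ref{l:taylor} and \ref{l:reduc2} and an induction on $x^sH$).

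Your Step 3 starts correctly (a $C^{\ell+1}$, hence $C^2$, conjugacy must send $X_f$ to $X_g$, by the Kopell-type uniqueness the paper also invokes), but the phrase "a Taylor expansion of $X_f,X_g$ together with $r\geq\ell+1$ extracts exactly the claimed $C^r$ regularity" is again precisely the hard point: the ODE $Dh\cdot X_f=X_g\circ h$ degenerates at $0$, and no standard bootstrap applies there; that the conclusion is delicate is shown by the paper's example in \S\ref{section-ejemplo-helene} ($C^{2\ell+1}$ germs with equal residues that are not $C^{\ell+1}$ conjugate) and by the extra differentiability hypothesis needed in the borderline case $r=\ell+1$ of Proposition \ref{p:takens-fini}, which your sketch never uses. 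Once a canonical $C^r$ conjugacy is actually constructed, the automatic upgrade is cleaner than an ODE bootstrap: any other $C^{\ell+1}$ conjugacy also conjugates the vector fields, hence differs from the canonical one by a flow element, which is of class $C^{\ell+r}$.
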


In particular (take $r = \ell + 1$), two germs of $C^{2\ell + 2}$ diffeomorphisms with the same iterative residue that are both expanding or both contracting are $C^{\ell+1}$ conjugate. Notice that this fails to be true for germs of $C^{2\ell+1}$ diffeomorphisms, as is shown by an example in \S \ref{section-ejemplo-helene}.

%It seems very hard (yet doable) to prove our results directly, that is, without passing to the associated vector fields. Even using them, some key steps rest on a very subtle property, namely, the fact that for %non-flat germs of $C^k$ diffeomorphisms, these vector fields are still $k$ times differentiable at the origin, despite they may fail to be of class $C^k$ (as mentioned before, they are ensured to be $C^{k-1}$). %Quite surprisingly, this is not explicitly stated this way in Yoccoz' paper \cite[Appendice 3]{yoccoz}, though it follows directly from  \S 8 therein. More precisely, it corresponds to his second condition $(C_i)$ %for $i=k-1$.  (Compare \cite[Lemma 1.1]{eynard-navas-sternberg}, which is the analogous property in the hyperbolic case.) 
\vspace{0.2cm}

This article is written in a pedagogic style because some of the issues discussed here are folklore. In particular, several sections presenting examples and alternative arguments may be skipped in a first reading by those who want to directly proceed to the core of the proofs. We hope that our presentation will help the readers in getting a panorama on the subject, and will perhaps invite them to work on it. A couple of concrete questions on distortion elements of the group of germs of diffeomorphisms is addressed in the last section.

%%%%%%%%%%%%%%%%%%%%%%%%%%%%%%%%%%%%%%%%%%%%%%%%%%%%%%%%%%%%%%%%%%%%%%%

\section{Some general (algebraic) facts}
\label{section-morphisms}

\subsection{Germs and morphisms}
\label{subsection:resadd}

Let $\mathbb{K}$ be an integral domain. We let $\ger$ be the group of formal power series of the form
$$a_1 x + \sum_{n\geq 2} a_n x^n,$$
where $a_n \in \mathbb{K}$ and $a_1 \in \mathbb{K} \setminus \{0\}$ is invertible.  The group operation is that of composition of series, which identifies to that of substitution of variables. In the case where $\mathbb{K}$ is a field, it also identifies to the group of formal germs of diffeomorphisms at the origin. 

For $\ell \geq 1$, we consider the subgroup $\geri$ of $\ger$ formed by the germs that are $\ell$-tangent to the identity, that is, that are of the form 
$$x + \sum_{n \geq \ell + 1} a_n x^n.$$ 
On $\ger$, there is an obvious homomorphism $\Phi_0$ into the multiplicative subgroup of the invertible elements of $\mathbb{K} \setminus \{0\}$ given by 
$$\Phi_0 : \sum_{n\geq 1} a_n x^n \mapsto a_1.$$
Similarly, on each subgroup $\geri$, there is an obvious homomorphism $\Phi_{\ell}$ into the additive group $\mathbb{K}$, namely, 
$$\Phi_\ell : x + \sum_{n \geq \ell + 1} a_n x^n \mapsto a_{\ell + 1} .$$ 
Actually, $\Phi_{\ell}$ is the first of $\ell$ homomorphisms $\Phi_{\ell,1}, \Phi_{\ell,2} \ldots, \Phi_{\ell,\ell}$ defined on $\geri$ by
$$\Phi_{\ell,i}: x + \sum_{n \geq \ell + 1} a_n x^n \mapsto a_{\ell+i}$$
(checking that these are homomorphisms is straightforward; see also (\ref{sale}) below).

Less trivially, there is another homomorphism from $\geri$ into $\mathbb{K}$, namely 
\begin{equation}\label{def-resad}
\overline{\mathrm{Resad}}_{\ell} \!: x + \sum_{n \geq \ell + 1} a_n x^n \mapsto  (\ell + 1) \, a_{\ell + 1}^2 - 2 a_{2\ell + 1}.
\end{equation}
Whenever it makes sense (for instance, when $\mathbb{K}$ is a zero-characteristic field), we let
\begin{equation}\label{eq-resad-i}
\mathrm{Resad}_{\ell} \!: x + \sum_{n \geq \ell + 1} a_n x^n \mapsto  \frac{\ell + 1}{2} \, a_{\ell + 1}^2 - a_{2\ell + 1}.
\end{equation}
We call both \, $\overline{\mathrm{Resad}}_{\ell}$ \, and \, $\mathrm{Resad}_{\ell}$ \, the {\em additive residues} at order $\ell + 1$. 
Checking that they yield group homomorphisms is straightforward: if 
$$f (x) = x + \sum_{n \geq \ell + 1} a_n x^n \quad \mbox{ and } \quad g (x) = x + \sum_{n \geq \ell + 1} a_n' x^n,$$ 
then $fg(x)$ equals 
$$ \left[ x + \sum_{m \geq \ell + 1} a_m' x^m \right] +  \sum_{n \geq \ell + 1} a_n  \left[ x + \sum_{m \geq \ell + 1} a_m' x^m \right]^n,$$
that is,
$$ x + \sum_{m =  \ell + 1}^{2\ell + 1} a_m' x^m + \sum_{n =  \ell + 1}^{2\ell + 1} a_n x^n  + a_{\ell + 1}  (\ell + 1) x^{\ell} a_{\ell + 1}' x^{\ell + 1}  + T_{2\ell+2},$$
where $T_{2\ell+2}$ stands for a formal power series in $x$ all of whose terms have order at least $2\ell+2$. Therefore, 
\begin{equation}\label{sale}
fg (x) = x + \sum_{n=\ell + 1}^{2 \ell} [a_n+a_n'] x^n + [a_{2\ell + 1} + a'_{2\ell + 1} + (\ell + 1) a_{\ell + 1} a_{\ell + 1}'] x^{2\ell + 1} + T_{2\ell+2},
\end{equation}
and 
$$\overline{\mathrm{Resad}}_{\ell} (fg) 
= (\ell + 1) \, [a_{\ell + 1} + a_{\ell + 1}']^2  - 2 \,  [a_{2\ell + 1} + a'_{2\ell + 1} + (\ell + 1) a_{\ell + 1} a_{\ell + 1}'] = 
\overline{\mathrm{Resad}}_{\ell} (f) + \overline{\mathrm{Resad}}_{\ell} (g). 
$$

\noindent{\bf A basis for the cohomology.} In the case where $\mathbb{K}$ is the field of real numbers, Fukui proved in \cite{fukui} that the $\Phi_{\ell,i}$ together with $\mathrm{Resad}_{\ell}$ are the generators of the continuous cohomology group of $\geri$. In \cite{babenko}, Babenko and Bogatyi treated the case $\mathbb{K} = \mathbb{Z}$. In particular, they computed the continuous cohomology $H^1 (\widehat{\mathrm{Diff}}_1 (\mathbb{Z},0))$: besides 
$$\Phi_{1}:  x + \sum_{n \geq 2} a_n x^n \mapsto a_2$$ 
and 
$$\mathrm{Resad}_1: x + \sum_{n \geq 2} a_n x^n \mapsto a_2^2 - a_3,$$ 
there are two homomorphisms into $\mathbb{Z}_2$, namely
$$x + \sum_{n \geq 2} a_n x^n \mapsto \frac{a_2 \, ( 1 + a_2 )}{2} + a_3 + a_4 \quad (\mbox{mod } 2)$$
and
$$x + \sum_{n \geq 2} a_n x^n \mapsto a_2  a_4 + a_4 + a_6 \quad (\mbox{mod } 2).$$
In \cite{BB}, Bogatayaa and Bogatyi do similar computations for $\mathbb{K} = \mathbb{Z}_p$, with $p$ a prime number. 
They show that, for $p \neq 2$, only  $\Phi_{1}$ and $\mathrm{Resad}_1$ survive, but for $p=2$, the situation is slightly more complicated. 
See also \cite{pavez}, where among other things 
it is shown that $H^1 ( \widehat{\mathrm{Diff}}_{\ell}(\mathbb{Z},0))$ is finitely generated for all values of $\ell$ larger than $1$. 
%Quite interestingly, it seems to be unknown whether $H^1 ( \widehat{\mathrm{Diff}}_{\ell}(\mathbb{Z},0))$ is finitely generated for values of $\ell$ larger than $1$.  

\vspace{0.4cm}

\noindent{\bf Genuine diffeomorphisms.} For $\mathbb{K}$ being the fields of the real, complex or $p$-adic numbers, we let $\mathrm{Diff}^{\omega} (\mathbb{K},0)$ be the group of genuine germs of analytic diffeomorphisms fixing the origin, and $\mathrm{Diff}^{\omega}_{\ell} (\mathbb{K},0)$ the subgroup of those elements that are $\ell$-tangent to the identity. These are subgroups of the corresponding groups $\ger$ and $\geri$, so the homomorphisms $\Phi_{\ell,i}$ and $\mathrm{Resad}_{\ell}$ restrict to them. 

More interestingly, 
%in the real case, 
each of the homomorphisms $\Phi_{\ell,i}$ and $\mathrm{Resad}_{\ell}$ above involve only finitely many derivatives at the origin (namely, $\ell + i$ and $2\ell + 1$, respectively). Their definitions hence extend to the subgroup $\mathrm{Diff}^{\infty}_{\ell} (\mathbb{K},0)$ of the group $\mathrm{Diff}^{\infty} (\mathbb{K},0)$ of germs of $C^{\infty}$ diffeomorphisms fixing the origin made of the elements that are $\ell$-tangent to the identity. Actually, they even extend to the larger group of germs of $C^{2\ell + 1}$ diffeomorphisms that are $\ell$-tangent to the identity. In this framework,  it is a nice exercise to check the additive property of $\mathrm{Resad}_{\ell}$ just by using the Fa\`a di Bruno formula. We will come back to this point in \S \ref{section-sc}.

Fukui's theorem stated above still holds in $\mathrm{Diff}^{\infty}_+ (\mathbb{R},0)$  (where $+$ stands for orientation-preserving maps). 
We do not know whether a complex or $p$-adic version of this is also valid. 

%%%%%%%%%%%%%%%%%%%%%%%%%%%%%%%%%%%%%%%%%%%%%%%%%%%%%%%%%%%%

\subsection{Res-it-ad}
\label{section-resitad}

Elements in $\geri \setminus \geriplus$ are those whose order of contact to the identity at the origin equals $\ell$. 
For simplicity, they will be said to be {\em exactly $\ell$-tangent to the identity}. These correspond to series expansions of the form 
$$x + \sum_{n \geq \ell + 1} a_n x^n, \quad \mbox{ with } \,\, a_{\ell + 1} \neq 0.$$
A well-known lemma gives a normal form for these elements.

\begin{lem}  \label{lem:raro}
If $\mathbb{K}$ is a zero-characteristic field having a square root of $-1$, then every $f~\in~\geri \setminus \geriplus$ is conjugate  in $\widehat{\mathrm{Diff}} (\mathbb{K},0)$ to a (unique) germ of the (normal) form
\begin{equation} \label{most-general-normal-form}
x \mapsto x + x^{\ell + 1} + \mu x^{2\ell + 1}.
\end{equation}
\end{lem}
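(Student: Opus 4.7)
The plan has two parts: existence by successive conjugations killing coefficients one at a time, and uniqueness by invoking the additive residue $\mathrm{Resad}_\ell$ from Subsection~\ref{subsection:resadd}.

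For existence, given $f(x)=x+a_{\ell+1}x^{\ell+1}+a_{\ell+2}x^{\ell+2}+\cdots$ in $\geri\setminus\geriplus$, I first conjugate by a homothety $h_0(x)=\lambda x$ with $\lambda^{\ell}=a_{\ell+1}^{-1}$, invoking the hypothesis on $\mathbb{K}$ to ensure the required $\ell$-th root exists. Since conjugation by $h_0$ multiplies each $a_n$ by $\lambda^{-(n-1)}$, this normalizes the leading coefficient to $1$. Then, inductively on $n\geq\ell+2$ with $n\neq 2\ell+1$, I conjugate the current germ by $h_n(x)=x+c_n x^{n-\ell}$. A direct expansion of $h_n\circ f\circ h_n^{-1}$ modulo $x^{n+1}$ shows that (i) all coefficients at orders strictly less than $n$ are preserved, and (ii) the coefficient at $x^n$ is shifted by $(n-2\ell-1)\,c_n$. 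The multiplier vanishes only at the resonant order $n=2\ell+1$, so for every non-resonant $n$ I can choose $c_n$ to annihilate $a_n$. The (formally convergent) composition of these conjugations realizes $f$ in the form $x+x^{\ell+1}+\mu\,x^{2\ell+1}$.

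For uniqueness, suppose $g_\mu(x):=x+x^{\ell+1}+\mu\,x^{2\ell+1}$ is conjugate to $g_{\mu'}$ by some $h\in\ger$ with linear part $\lambda$. Comparing the $x^{\ell+1}$ coefficients of $h\circ g_\mu=g_{\mu'}\circ h$ immediately yields $\lambda^{\ell}=1$. Decomposing $h=h_\lambda\circ\tilde h$ with $\tilde h$ tangent to the identity, one verifies that the homothety step fixes the pair $(a_{\ell+1},a_{2\ell+1})=(1,\mu)$ of the normal form, because both coefficients scale by powers of $\lambda$ which equal $1$ under $\lambda^{\ell}=1$. Hence $\tilde h$ itself must conjugate $g_\mu$ to $g_{\mu'}$, and the desired equality $\mu=\mu'$ follows from the value $\mathrm{Resad}_\ell(g_\nu)=\tfrac{\ell+1}{2}-\nu$ combined with the conjugation invariance of $\mathrm{Resad}_\ell$.

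The main obstacle is precisely this last step, namely establishing invariance of $\mathrm{Resad}_\ell$ under conjugation by an arbitrary $\tilde h\in\widehat{\mathrm{Diff}}_1(\mathbb{K},0)$, since the homomorphism property derived in Subsection~\ref{subsection:resadd} only directly yields invariance under inner conjugation within $\geri$, which is strictly smaller when $\ell\geq 2$. I would resolve this by factoring $\tilde h$ through the descending chain $\widehat{\mathrm{Diff}}_1(\mathbb{K},0)\supset\widehat{\mathrm{Diff}}_2(\mathbb{K},0)\supset\cdots\supset\geri$ and, at each step, applying the existence argument to absorb the non-resonant part of $\tilde h$ into a conjugating element actually lying in $\geri$, at which point the homomorphism property closes the argument. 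An equivalent route is a direct coefficient comparison at order $x^{2\ell+1}$, verifying that the cross-terms in the $b_k$-coefficients of $\tilde h$ cancel exactly in the combination $\tfrac{\ell+1}{2}a_{\ell+1}^2-a_{2\ell+1}$, mirroring the computation in (\ref{sale}).
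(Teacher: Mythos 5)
Your existence argument is the standard one and is essentially what the paper does (it is the all-orders, formal-series version of the sketch given for Lemma~\ref{lem-basic}, i.e.\ Voronin's argument that the paper cites): the computation that conjugating by $x+c_nx^{n-\ell}$ shifts the coefficient of $x^n$ by $(n-2\ell-1)\,c_n$ while preserving all lower-order coefficients is correct, it correctly isolates the resonance at $n=2\ell+1$, and it correctly uses characteristic zero to invert $n-2\ell-1$. One caveat: the initial homothety step requires $a_{\ell+1}$ to be an $\ell$-th power in $\mathbb{K}$, and the stated hypotheses (characteristic zero and $\sqrt{-1}\in\mathbb{K}$) do not guarantee this --- take $\mathbb{K}=\mathbb{Q}(i)$, $\ell=2$, $a_3=2$. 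So your claim that ``the hypothesis on $\mathbb{K}$'' provides the root is inaccurate; this is arguably a deficiency of the lemma's hypotheses (the cited argument lives over $\mathbb{C}$), but you should not attribute to them something they do not give.

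The genuine gap is in the uniqueness, precisely at the step you flag as the main obstacle, and neither of your proposed resolutions works as stated. The ``equivalent route'' --- that the cross-terms cancel in $\tfrac{\ell+1}{2}a_{\ell+1}^2-a_{2\ell+1}$ under conjugation by an arbitrary $\tilde h\in\widehat{\mathrm{Diff}}_1(\mathbb{K},0)$ --- is simply false for $\ell\geq 2$: conjugating $x+x^3+\mu x^5$ by $x+cx^2$ gives $x+x^3-cx^4+(\mu+c^2)x^5+\cdots$, so $\mathrm{Resad}_2$ changes by $-c^2$. And ``absorbing the non-resonant part of $\tilde h$ into a conjugator lying in $\geri$'' is not an operation available to you: the conjugator is given, not chosen. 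What actually closes the argument is that the intermediate zero coefficients of the normal forms constrain $\tilde h$ \emph{before} you ever look at order $2\ell+1$: if $\tilde h(x)=x+c_px^p+\cdots$ with $c_p\neq 0$ and $2\le p\le\ell$ conjugates $g_\mu$ to $g_{\mu'}$, then your own existence computation applied at order $n=\ell+p\le 2\ell$ (where both normal forms have coefficient $0$) yields $0=0+c_p(p-\ell-1)$, forcing $c_p=0$. Iterating, $\tilde h\in\geri$, and only then does the homomorphism property of $\mathrm{Resad}_\ell$ give $\mu=\mu'$. (For $\ell=1$ there is nothing to do, since $\widehat{\mathrm{Diff}}_1(\mathbb{K},0)=\geri$.) This is exactly the mechanism the paper uses in its first proof of Theorem~\ref{t:A} to show that the conjugacy must be $\ell$-tangent to the identity; your write-up has all the needed ingredients but resolves the key step incorrectly.
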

 
An explicit argument showing this in the complex case that applies with no modification to the present case appears in \cite{voronin}. The value of $\mu$ is called the {\em residue} of $f$, and denoted $\mathrm{Res} (f)$. As it is very well known (and easy to check), this is invariant under conjugacy. Actually, if $f$ with normal form (\ref{most-general-normal-form}) is conjugate in $\ger$ to a germ of the (reduced) form
$$x + x^{\ell + 1} + \sum_{n \geq 2\ell + 1} a_n x^{n},$$
then necessarily $a_{2\ell + 1} = \mathrm{Res} (f)$. 

The residue was introduced as a conjugacy invariant in the complex setting: for a germ of a holomorphic map fixing the origin, one defines 
$$\mathrm{R}(f) := \frac{1}{2 \pi i} \int_{\gamma} \frac{dz}{z - f(z)},$$
where $\gamma$ is a small, simple and positively oriented loop around the origin. For germs of the form 
$$f (z) = z +  z^{\ell + 1} +  \sum_{n \geq  2\ell + 1} a_n z^n,$$
one easily checks that $\mathrm{R}(f) = \mathrm{Res} (f) = a_{2\ell+1}$. 

The {\em iterative residue} of $f \in \geri \setminus \geriplus$, denoted $\mathrm{Resit} (f)$, is simply the value 
$$\frac{\ell + 1}{2} - \mathrm{Res} (f).$$
This satisfies the fundamental relation 
\begin{equation}\label{eq:ecalle-resit}
\mathrm{Resit} (f^n) = \frac{\mathrm{Resit}(f)}{|n|}
\end{equation}
for every integer $n \neq 0$. It was introduced by \'Ecalle in the context of germs of holomorphic diffeomorphisms \cite{ecalle}. 
A  proof of the formula above in this setting (due to \'Ecalle) appears for instance in \cite{milnor}. Below  we give an algebraic 
proof that works in a much broader context (in particular, it covers the case of germs of diffeomorphisms of the real line with 
finite regularity discussed just after Lemma \ref{lem-basic}).  
As we will see, it just follows from the additive properties of $\Phi_{\ell}$ and $\mathrm{Resad}_{\ell}$. 

Start with a germ of the form
$$f(x) = x + x^{\ell + 1} + \mu x^{2\ell + 1} +  \sum_{n \geq 2\ell+2} a_n x^n.$$
Using the homomorphisms $\Phi_{\ell}$ and $\mathrm{Resad}_{\ell}$, we obtain 
$$f^n (x) = x + n x^{\ell + 1} + \mu_n x^{2\ell + 1} + T_{2\ell+2},$$
with 
$$\frac{(\ell + 1) \, n^2}{2} - \mu_n = \mathrm{Resad}_{\ell} (f^n) = n \, \mathrm{Resad}_{\ell} (f) = n \, \left[ \frac{(\ell + 1)}{2} - \mu \right] = n \, \mathrm{Resit}(f).$$
Assume $n > 0$. If we conjugate by $H_{\lambda_n} \! :  x \mapsto \lambda_n x$ with $\lambda_n := \sqrt[\ell]{n}$, we obtain 
$$H_{\lambda_n} f^n H_{\lambda_n}^{-1} (x) = x + x^{\ell + 1} + \frac{\mu_n}{n^2} x^{2\ell + 1} + T_{2\ell+2}'.$$
By definition, this implies that 
$$\mathrm{Resit} (f^n) =  \frac{\ell + 1}{2} - \frac{\mu_n}{n^2} 
= \frac{1}{n^2} \left[ \frac{(\ell + 1) \, n^2}{2} - \mu_n \right] = 
\frac{ n \, \mathrm{Resit} (f) }{n^2} = \frac{\mathrm{Resit}(f)}{n}.$$
as announced. Computations for $n < 0$ are analogous.

\begin{ex} It follows from (the proof of) Lemma \ref{lem:raro} (see also Lemma \ref{lem-basic} below) 
that, for $f \in \geri \setminus \geriplus$ of the form
$$f(x) = x + \sum_{n \geq \ell + 1} a_n \, x^n, \qquad a_{\ell + 1} \neq 0,$$
the value of $\mathrm{Resit} (f)$ is a polynomial function of $a_{\ell + 1},a_{\ell+2}, \ldots, a_{2\ell + 1}$ times some integer power of $a_{\ell + 1}$. 
For instance, the reader may readily check that, for $a \neq 0$, 
$$\mathrm{Resit} (x + ax^2+bx^3+ \ldots) = \frac{a^2 - b}{a^2},$$
$$\mathrm{Resit} (x+ax^3+bx^4+cx^5+\ldots ) 
= \frac{3a^3 - b^2 - ac}{2a^3}.$$
See \cite[Theorem 1]{juan} for a general result in this direction. 
\end{ex}

\noindent  {\bf The residues in the real case.} Much of the discussion above still makes sense 
for germs of $C^{2\ell + 1}$ diffeomorphisms. In particular, the morphisms $\Phi_{\ell,i}$ 
and $\mathrm{Resad}_{\ell}$ extend to the subgroup $\mathrm{Diff}^{2\ell + 1}_{\ell} (\mathbb{R},0)$ made of those germs that are $\ell$-tangent to the identity. Moreover, the definition of $\mathrm{Resit}$ still extends to the elements of this subgroup that are exactly $\ell$-tangent to the identity at the origin, because of the next very well known lemma. (Compare Lemma \ref{lem:raro}.) For the statement, recall that a germ $f$ of diffeomorphisms of the real line fixing the origin is (topologically) 
{\em contracting} (resp. {\em expanding}) if  $f(x) < x$ (resp. $f(x)>x$) holds for all small-enough $x >0$.  
In all what follows, the little-$o$-notation $o (\cdot)$ will refer to values of the involved variable 
that are positive but very close to zero. 

\begin{lem} \label{lem-basic} Let $f$ be the germ of a $C^{2\ell + 1}$ diffeomorphism of the real line 
that is exactly $\ell$-tangent to the identity at the origin. If $f$ is expanding (resp. contracting), then there exists a germ of polynomial diffeomorphism $h$ such that the conjugate germ $hfh^{-1}$ has 
a Taylor series expansion at the origin of the (reduced) form
$$hfh^{-1} (x) = x \pm x^{\ell + 1} + \mu x^{2\ell + 1} + o (x^{2\ell + 1}),$$
where the sign is positive (resp. negative) in the expanding (resp. contracting) case. Moreover, $\mu$ is uniquely determined up to conjugacy by a germ of $C^{2\ell + 1}$ diffeomorphism.
\end{lem}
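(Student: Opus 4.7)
The plan is to carry out both claims at the level of $(2\ell+1)$-jets at the origin. By Faà di Bruno, the $(2\ell+1)$-jet of a composition or inverse of $C^{2\ell+1}$ germs depends only on the corresponding jets of the inputs, so every manipulation below reduces to a purely algebraic computation on truncated Taylor series, exactly as in the formal setting of Lemma \ref{lem:raro}.

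\textbf{Existence.} Expand $f(x) = x + \sum_{n=\ell+1}^{2\ell+1} a_n x^n + o(x^{2\ell+1})$ with $a_{\ell+1} \neq 0$. A first conjugation by the homothety $H_\lambda \colon x \mapsto \lambda x$ with $\lambda = |a_{\ell+1}|^{1/\ell}$ rescales the leading coefficient to $\varepsilon := \mathrm{sign}(a_{\ell+1}) \in \{+1,-1\}$, positive in the expanding and negative in the contracting case. One then eliminates the intermediate coefficients $a'_{\ell+2}, a'_{\ell+3}, \ldots, a'_{2\ell}$ one at a time, for $m = 2, 3, \ldots, \ell$, by conjugating by the polynomial $x \mapsto x + c_m x^m$. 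A direct jet expansion shows that such a conjugation changes the coefficient of $x^{\ell+m}$ by $\varepsilon (m - \ell - 1)\, c_m$ (while preserving all lower-order coefficients already set), so the equation $a'_{\ell+m} + \varepsilon(m - \ell - 1)\, c_m = 0$ can always be solved because $m - \ell - 1 \neq 0$ for $m \in \{2, \ldots, \ell\}$. The composition of these polynomial maps with $H_\lambda$ yields a polynomial germ $h$ bringing $f$ to the claimed form $x + \varepsilon x^{\ell+1} + \mu x^{2\ell+1} + o(x^{2\ell+1})$.

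\textbf{Uniqueness.} Suppose two normal forms $f_1(x) = x + \varepsilon x^{\ell+1} + \mu_1 x^{2\ell+1} + o(x^{2\ell+1})$ and $f_2$ (with residue $\mu_2$) satisfy $hf_1 = f_2 h$ for some $h \in \mathrm{Diff}^{2\ell+1}_+(\mathbb{R},0)$. Matching coefficients of $x^{\ell+1}$ in the identity $h(f_1(x)) = f_2(h(x))$ yields $h'(0)^\ell = 1$, hence $h'(0)=1$. Writing $h(x) = x + \sum_{j \geq 2} h_j x^j$ and matching coefficients of $x^{\ell+m}$ for $m = 2, 3, \ldots, \ell$ successively gives, by induction on $m$, the relation $\varepsilon(\ell+1-m)\, h_m = 0$: the terms in $h_{\ell+m}$ cancel on both sides, and all products involving $h_2, \ldots, h_{m-1}$ vanish by the inductive hypothesis. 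Since $\ell+1-m \neq 0$ in this range, we obtain $h_2 = \cdots = h_\ell = 0$, so $h$ lies in $\mathrm{Diff}^{2\ell+1}_\ell(\mathbb{R},0)$. The homomorphism property of $\mathrm{Resad}_\ell$ on this group (see \S\ref{subsection:resadd}) then gives $\mathrm{Resad}_\ell(f_1) = \mathrm{Resad}_\ell(f_2)$, which, evaluated on the two normal forms, reads $\tfrac{\ell+1}{2} - \mu_1 = \tfrac{\ell+1}{2} - \mu_2$, i.e., $\mu_1 = \mu_2$.

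The main technical obstacle in both parts is the resonance bookkeeping: one has to verify that, at each intermediate order, the linear coefficient governing either the change of $a'_{\ell+m}$ (in existence) or the constraint on $h_m$ (in uniqueness) is nonzero. In both cases this reduces to the elementary arithmetic fact $\ell+1 \neq m$ for $m \in \{2, \ldots, \ell\}$, but the explicit Taylor expansions still have to be carried out with some care to control the stack of cross-terms that appear at each step.
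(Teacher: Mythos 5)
Your proof is correct and follows essentially the same route as the paper's: a homothety normalizes $a_{\ell+1}$ to $\pm 1$, and then successive conjugations by $x \mapsto x + c_m x^m$, $2 \le m \le \ell$, kill the intermediate coefficients precisely because of the non-resonance $m \neq \ell+1$. The uniqueness statement, which the paper leaves to the reader, is settled correctly by your reduction to an $\ell$-tangent conjugating germ followed by the homomorphism property of $\mathrm{Resad}_{\ell}$, which is exactly in the spirit of \S \ref{subsection:resadd}.
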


\begin{proof} By assumption, $f$ has a Taylor series expansion at the origin of the form
$$f(x) = x + \sum_{n=\ell + 1}^{2\ell + 1} a_n x^n + o (x^{2\ell + 1}), \quad \mbox{with } a_{\ell + 1} \neq 0.$$
By conjugating $f$ by a homothety, we can arrange that $a_{\ell + 1} = 1$ (resp.  $a_{\ell + 1} = -1$) in the expanding (resp. contracting) case. Now, by conjugating by a germ of the form $x+\alpha x^2$, we can make $a_{\ell+2}$ vanish without changing $a_{\ell + 1} = \pm1$. Next we conjugate 
by a germ of the form $x+\alpha x^3$ to make $a_{\ell+3}$ vanish without changing $a_{\ell + 1} = \pm 1$ and $a_{\ell+2}=0$. One continues this way up to conjugating by a germ of the form $x+\alpha x^{\ell}$, which allows killing $a_{2\ell}$. We leave it to the reader to fill in the details as well as the proof of the uniqueness of $\mu$ (see also \cite{voronin}).
\end{proof}

In any case above, we let 
$$\mathrm{Res} (f) :=  \mu 
\qquad \mbox{ and } \qquad 
\mathrm{Resit} (f) = \frac{\ell + 1}{2} - \mu.$$
One readily checks that this definition is coherent with that of the complex analytic case. 

\begin{rem}
\label{rem:diferenciable}
Strictly speaking, for a germ of diffeomorphism that is $\ell$-tangent to the identity, one does not need $C^{2\ell+1}$ regularity to define residues: $(2\ell + 1)$-differentiability at the origin is enough. Indeed, all definitions only use Taylor series expansions at the origin and their algebraic properties under composition.
\end{rem}

%%%%%%%%%%%%%%%%%%%%%%%%%%%%%%%%%%%%%%%%%%%%%%%%%%%%%%%%%%%%%%%%%%%%%%%%%%%

\subsection{Schwarzian derivatives and $\mathrm{Resad}_{\ell}$}
\label{section-sc}

 For a germ of a genuine diffeomorphism $f \in \mathrm{Diff}^{\infty}(\mathbb{K},0)$ (where $\mathbb{K}$ is, for example, the field of real numbers), one has 
$$a_n = \frac{D^n f (0)}{n!}.$$
Thus, if $Df (0) = 1$, then 
$$\mathrm{Resad}_1 (f) = a_2^2 - a_3 = \frac{(D^2 f (0))^2}{4} - \frac{D^3f(0)}{6} = 
\frac{1}{6} \left[ \frac{3}{2} \left( \frac{D^2 f(0)}{Df(0)} \right)^2 - \frac{D^3f (0)}{Df(0)} \right] = \frac{Sf (0)}{6},$$
where $Sf$ denotes the Schwarzian derivative. That $Sf$ appears as a group homomorphism is not surprising. Indeed, in a general setting, it satisfies the cocycle identity 
\begin{equation}\label{cocicle-s}
S (fg) = S(g) + S (f) \circ g \cdot Dg^2,
\end{equation}
and restricted to parabolic germs this becomes 
$$S(fg)(0) = Sg (0) + Sf (0).$$
In this view, $\mathrm{Resad}_{\ell}$ may be seen as a kind of ``Schwarzian derivative of higher order'' for parabolic germs.  
Now, for $f \in \mathrm{Diff}_{\ell}^{\infty} (\mathbb{K},0)$, we have
$$\mathrm{Resad}_{\ell} (f) 
= \frac{\ell + 1}{2} \left( \frac{D^i f (0)}{(\ell + 1)\,!} \right)^2 - \frac{D^{2\ell + 1}f(0)}{(2\ell + 1)\,!} 
= \frac{1}{(2\ell + 1)\, !} \left[ {2\ell + 1 \choose \ell} \frac{(D^{\ell + 1} f(0))^2}{2} - D^{2\ell + 1} f (0) \right] \! .$$
Thus, letting
$$S_{\ell + 1} (f) := {2\ell + 1 \choose \ell} \frac{(D^{\ell + 1} f(0))^2}{2} - D^{2\ell + 1} f (0) = (2\ell+1)! \, \mathrm{Resad}_{\ell}(f),$$
we have $S_1 (f) = S(f)$. 

In a slightly broader context, if $f,g$ are two germs of $C^{2\ell + 1}$ diffeomorphisms  with all derivatives $D^2, D^3, \ldots ,D^{\ell}$ vanishing at the origin then, using the Fa\`a di Bruno formula, it is straightforward to check the cocycle relation below: 
$$S_{\ell} (fg) = S_{\ell} (g) + S_{\ell} (f)  \cdot (Dg )^{2\ell}.$$
This is an extension of the additive property of $\mathrm{Resad}_{\ell}$ to the framework of non-parabolic germs.

\vspace{0.1cm}

\begin{rem} 
A classical theorem of Szekeres establishes that every expanding or contracting germ of $C^2$ diffeomorphism of the real line is the time-1 map of a flow of  germs of $C^1$ diffeomorphisms;  moreover, a famous lemma of Kopell establishes that its centralizer in the group of germs of $C^1$ diffeomorphisms coincides with this flow (see \cite[chapter 4]{book}). Furthermore, it follows from the work of Yoccoz \cite{yoccoz}  that, for non-flat germs of class $C^k$, the flow is made of $C^{k-1}$ diffeomorphisms. Therefore, to each $f \in \mathrm{Diff}^{2\ell + 2}_{\ell} (\mathbb{R},0) \setminus \mathrm{Diff}^{2\ell + 2}_{\ell+1} (\mathbb{R},0)$  we may associate its flow $(f^t)$ made of $C^{2\ell+1}$ germs of diffeomorphisms. These are easily seen to be exactly $\ell$-flat. Moreover, equality (\ref{eq:ecalle-resit}) has a natural extension in this setting, namely, for all $t \neq 0$, 
\begin{equation}\label{eq:resit-flow}
\mathrm{Resit}(f^t) = \frac{\mathrm{Resit}(f)}{|t|}.
\end{equation}
See Remark \ref{rem:resit-flow} for a proof.
\end{rem}

\begin{rem} In the real or complex setting, if 
$$f(x) = x + ax^{\ell + 1} + bx^{2\ell + 1} + \ldots$$
then, letting $h (x) = x^{\ell}$, one readily checks that  
\begin{equation}\label{eq:red}
h f h^{-1} (x) = x + \ell \, a \, x^2 + \left[ \ell \, b + \frac{ \ell \, (\ell-1)}{2} a^2 \right] x^3 + \ldots.
\end{equation}
Moreover, using Lemmas \ref{l:taylor} and \ref{l:debile} below, one can show that $hfh^{-1}$ is of class $C^3$ if $f$ is of class $C^{2\ell+1}$ (yet this is not crucial to define residues, according to Remark \ref{rem:diferenciable}).  Now, (\ref{eq:red}) easily yields
$$\mathrm{Resad}_1 (h f h^{-1}) = \ell \,\, \mathrm{Resad}_{\ell} (f) 
\qquad \mbox{ and } \qquad 
\mathrm{Resit} (h f h^{-1}) = \frac{\mathrm{Resit}(f)}{\ell}.$$
The value of $\mathrm{Resad}_{\ell}$ hence equals (up to a constant) that of $\mathrm{Resad}_1$ of the conjugate, and the same holds for $\mathrm{Resit}$. 
Since $\mathrm{Resad}_1$ is nothing but a sixth of the Schwarzian derivative, this gives even more insight on $\mathrm{Resad}_{\ell}$ as a generalization of the Schwarzian derivative. 
Compare \cite{BW}, where the Schwarzian derivative naturally arises in the study of conjugacy classes of germs of real-analytic diffeomorphisms.)
\label{rem:i-1}
\end{rem}

%%%%%%%%%%%%%%%%%%%%%%%%%%%%%%%%%%%%%%%%%%%%%%%%%%%%%%%%%%%%%%%%%%%%

\section{On conjugacies and residues: statements, examples}
\label{section-statements}

Let us next recall the first two main results of this work.

\begingroup
\def\thethm{\ref{t:A}}
\begin{thm} \label{t:A-bis} 
 Given $\ell \geq 1$, let $f,g$ be two parabolic germs in $\mathrm{Diff}^{2\ell + 1}_+(\mathbb{R},0)$ that are exactly 
$\ell$-tangent to the identity. If $f$ and $g$ are conjugated by a germ in $\mathrm{Diff}^{\ell + 1}_+ (\mathbb{R},0)$, then they have the same 
(iterative) residue.
\end{thm}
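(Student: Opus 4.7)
The strategy is to pass from the germs to their Szekeres vector fields and extract the residue as the obstruction to smoothness of a conjugacy equation expressed in Fatou coordinates.

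First, using Lemma~\ref{lem-basic}, I conjugate $f$ and $g$ by polynomial (hence $C^{\infty}$) diffeomorphisms to bring them to the reduced normal form
$$f(x) = x + x^{\ell+1} + \mu_f\, x^{2\ell+1} + o(x^{2\ell+1}), \quad g(x) = x + x^{\ell+1} + \mu_g\, x^{2\ell+1} + o(x^{2\ell+1})$$
(treating the expanding case; the contracting one is symmetric). Such polynomial conjugacies preserve both $C^{2\ell+1}$ regularity and the value of the residue, so the task reduces to showing $\mu_f = \mu_g$, equivalently $\mathrm{Resit}(f) = \mathrm{Resit}(g)$.

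By Szekeres' theorem, $f$ and $g$ are the time-1 maps of uniquely defined $C^1$ vector fields $X_f, X_g$; by Yoccoz' regularity result these are $C^{2\ell}$ on a right neighborhood of the origin and (crucially, as emphasized in the introduction) $(2\ell+1)$-times differentiable at $0$. A direct computation from $\phi^1_X = \exp(X\partial_x)(\mathrm{id})$ shows that, in the above normal form,
$$X_f(x) = x^{\ell+1} - \mathrm{Resit}(f)\, x^{2\ell+1} + o(x^{2\ell+1}),$$
with all intermediate Taylor coefficients vanishing (and analogously for $X_g$). Kopell's lemma then guarantees that the $C^{\ell+1}$ conjugacy $h$ automatically intertwines the two Szekeres flows, $h\circ f^t = g^t\circ h$ for all $t$; differentiating at $t=0$ yields the pointwise identity $h'(x)\, X_f(x) = X_g(h(x))$. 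Integrating this once via the Fatou coordinates $F(x) := \int dx/X_f(x)$ and $G(y) := \int dy/X_g(y)$ produces the translation-type conjugation $G \circ h = F + c$ for some constant $c$.

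Combining the expansion of $X_f$ with an elementary integration gives
$$F(x) = -\frac{1}{\ell\, x^{\ell}} + \mathrm{Resit}(f)\log x + o(\log x) + \mathrm{const},$$
and similarly for $G$. Writing $h(x) = x + v(x)$ and substituting into $G(h(x)) = F(x) + c$, the dominant singular terms $-1/(\ell x^{\ell})$ cancel, and an asymptotic solve yields
$$v(x) = c'\, x^{\ell+1} + \bigl(\mathrm{Resit}(f) - \mathrm{Resit}(g)\bigr)\, x^{\ell+1}\log x + o(x^{\ell+1}\log x).$$
Since $x^{\ell+1}\log x$ fails to be $(\ell+1)$-times differentiable at $0$ (its $(\ell+1)$-st derivative contains an unbounded $\log x$ contribution), the hypothesis $h \in \mathrm{Diff}^{\ell+1}_+(\mathbb{R},0)$ forces the coefficient $\mathrm{Resit}(f) - \mathrm{Resit}(g)$ to vanish, which is the desired conclusion.

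The hardest step is making this last deduction rigorous: controlling the remainder in the asymptotic expansion of $v$ strongly enough that it cannot mask the logarithmic obstruction, and verifying that the mere $(2\ell+1)$-differentiability of $X_f$ at $0$ (rather than $C^{2\ell+1}$ there) still yields a usable expansion of $F$ past the leading singular term. Both points rest delicately on Yoccoz' regularity statement for the Szekeres vector field recalled in the introduction.
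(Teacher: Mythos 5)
Your argument is correct and, while it starts from the same platform as the paper (reduction to normal form via Lemma \ref{lem-basic}, the expansion $X_f(x)=x^{\ell+1}-\mathrm{Resit}(f)\,x^{2\ell+1}+o(x^{2\ell+1})$ of the Szekeres field from (\ref{eq-fnvf}), and the Kopell intertwining $Dh\cdot X_f=X_g\circ h$), it then diverges from both of the paper's proofs. The first proof in the paper stays infinitesimal: it identifies the coefficient of $x^{2\ell+1}$ on the two sides of $Dh\cdot X_f=X_g\circ h$ using the $(\ell+1)$-jet of $h$, and the residues cancel against the $(\ell+1)c$ terms. The second proof evaluates the Fatou coordinate along the discrete orbit $f^n(x_0)$ (Proposition \ref{prop:deviation}) and plays the resulting $\log n/(n\,n^{1/\ell})$ deviation against the bound $|h(x)-x|\le Cx^{\ell+1}$. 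Your route is in effect a continuous-time version of that second proof: integrating the intertwining relation to $G\circ h=F+c$, cancelling the singular parts $-1/(\ell x^{\ell})$ exactly, and reading $\mathrm{Resit}(f)-\mathrm{Resit}(g)$ off as the coefficient of $x^{\ell+1}\log x$ in $v=h-\mathrm{id}$. This is a legitimate and arguably cleaner packaging of the same mechanism; the worry you raise about $X_f$ being merely $(2\ell+1)$-times differentiable at $0$ is resolved exactly as in the paper's computation, since Taylor--Young at the origin is all that is needed to integrate $1/X_f$ with an $o(\log x)$ error.

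Two steps do need to be nailed down, though both are short. First, you write $h(x)=x+v(x)$ with $v(x)=o(x)$ without proving $Dh(0)=1$; this is needed to linearize $G(x+v)-G(x)=\frac{v(x)}{x^{\ell+1}}(1+o(1))$, and it follows by matching the leading singular terms $-1/(\ell\,(Dh(0))^{\ell}x^{\ell})$ and $-1/(\ell x^{\ell})$ in $G\circ h=F+c$. Second, the concluding deduction should not rest on the slogan that $x^{\ell+1}\log x$ is not of class $C^{\ell+1}$, because $v$ is only \emph{asymptotic} to such a function and a remainder could a priori hide the obstruction. The correct argument is at the level of values: the functional equation itself gives $v(x)=O(x^{\ell+1}\log x)=o(x^{\ell})$, so all Taylor coefficients of $v$ at $0$ up to order $\ell$ vanish, and Taylor--Young for the $(\ell+1)$-times differentiable $v$ then yields $v(x)=O(x^{\ell+1})$; this is incompatible with $v(x)\sim(\mathrm{Resit}(f)-\mathrm{Resit}(g))\,x^{\ell+1}\log x$ unless the residues coincide. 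With these two points made explicit, your proof is complete.
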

\addtocounter{thm}{-1}
\endgroup

\begingroup
\def\thethm{\ref{t:B}}
\begin{thm} \label{t:B-bis} 
Given $\ell \geq 1$, let $f,g$ be two parabolic germs in $\mathrm{Diff}^{2\ell + 1}_+(\mathbb{R},0)$ 
that are exactly $\ell$-tangent to the identity. If they are both expanding or both contracting,  
then they are conjugated by a germ in $\mathrm{Diff}^{\, \ell}_+ (\mathbb{R},0)$.
\end{thm}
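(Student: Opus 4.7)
The plan is to build the conjugacy through the Szekeres vector fields associated to $f$ and $g$. By Yoccoz's refinement of Szekeres' theorem (recalled in the introduction), since $f \in \mathrm{Diff}^{2\ell+1}_+(\mathbb{R},0)$ is exactly $\ell$-tangent to the identity, the unique $C^1$ vector field $X_f$ having $f$ as time-$1$ map is of class $C^{2\ell}$ on a right neighborhood of the origin, and admits at $0$ a Taylor expansion to order $2\ell+1$ of the form $X_f(x) = a_{\ell+1} x^{\ell+1} + \cdots + \alpha_{2\ell+1} x^{2\ell+1} + o(x^{2\ell+1})$, with $a_{\ell+1}$ of the expected sign. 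The analogous statement holds for $X_g$. After a preliminary polynomial (hence smooth) conjugacy applying Lemma~\ref{lem-basic}, I may assume that the leading coefficient of both $X_f$ and $X_g$ equals $\pm 1$, matching the common expanding/contracting regime.

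Conjugating $f$ to $g$ then reduces to conjugating $X_f$ to $X_g$. To this end, I introduce the first integrals
$$ \varphi_f(x) := \int_{x_0}^x \frac{dt}{X_f(t)}, \qquad \varphi_g(x) := \int_{x_0}^x \frac{dt}{X_g(t)},$$
which are $C^{2\ell+1}$ diffeomorphisms from a right neighborhood $(0, \eta)$ of $0$ onto a ray $(-\infty, A)$, and which conjugate $X_f, X_g$ (and hence the time-$1$ maps $f$ and $g$) respectively to $\partial_t$ and the unit translation. Accordingly, the map $h := \varphi_g^{-1} \circ \varphi_f$, extended by $h(0) = 0$, is a homeomorphism fixing the origin that conjugates $f$ to $g$ on $(0, \eta)$. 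The remaining goal is to show that $h$ is a germ of $C^{\ell}$ diffeomorphism at $0$.

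The regularity analysis proceeds by extracting the singular part of $\varphi_f$. Dividing the Taylor expansion of $X_f$ by $x^{\ell+1}$ and inverting the resulting power series yields a decomposition
$$ \frac{1}{X_f(x)} = \sum_{k=0}^{\ell} \frac{q_{k,f}}{x^{\ell+1-k}} + \rho_f(x),$$
where the $q_{k,f}$ are polynomial in the Taylor coefficients of $X_f$, with $q_{\ell,f} = \mathrm{Resit}(f)$ under the above normalization, and where $\rho_f$ is a suitable remainder. Integrating and handling $\rho_f$ carefully (crucially using the $(2\ell+1)$-differentiability of $X_f$ at $0$, not only its $C^{2\ell}$-regularity away from the origin), one obtains
$$\varphi_f(x) = -\sum_{k=0}^{\ell-1} \frac{q_{k,f}}{(\ell - k)\, x^{\ell - k}} + \mathrm{Resit}(f)\, \log x + \psi_f(x),$$
with $\psi_f$ of class at least $C^{\ell}$ near $0$, and similarly for $\varphi_g$. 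Inverting $\varphi_g$ in terms of its leading pole $-1/(\ell x^{\ell})$ by a standard bootstrap and composing with $\varphi_f$, one finally finds
$$h(x) = x + C\, \bigl(\mathrm{Resit}(f) - \mathrm{Resit}(g)\bigr)\, x^{\ell+1} \log x + \tilde{h}(x),$$
where $\tilde{h}$ is at least of class $C^{\ell+1}$ at $0$. Since $x^{\ell+1} \log x$ is of class $C^{\ell}$ but not $C^{\ell+1}$ at the origin, this gives $h \in \mathrm{Diff}^{\ell}_+(\mathbb{R},0)$, as desired.

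The main obstacle lies in the regularity bookkeeping of the remainders $\psi_f, \psi_g$ and of the composition $\varphi_g^{-1} \circ \varphi_f$: the singular pole parts must combine so as to leave behind only a $C^{\ell+1}$-contribution in $\tilde{h}$ (on top of the dominant linear identity term), while the logarithmic coefficients combine precisely into a constant multiple of $\mathrm{Resit}(f) - \mathrm{Resit}(g)$. The unavoidable appearance of the $x^{\ell+1}\log x$ term whenever $\mathrm{Resit}(f) \neq \mathrm{Resit}(g)$ is exactly the sharpness asserted by Theorem~\ref{t:B}: it is the obstruction preventing $h$ from being $C^{\ell+1}$, in full consistency with Theorem~\ref{t:A}. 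The argument moreover extends verbatim to the more general situation of Theorem~$\mathrm{B}'$, the regularity thresholds propagating from $2\ell+1$ down to $\ell+1+r$ at the cost of replacing $C^{\ell}$ by $C^{r}$ throughout.
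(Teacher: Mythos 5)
Your overall route (pass to the Szekeres--Yoccoz vector fields and compare their time coordinates $\varphi_f,\varphi_g$) is in the same spirit as the paper's proof, but the central regularity claim in the middle of your argument is false at the regularity you are allowed to use, and this is a genuine gap. From $f\in\mathrm{Diff}^{2\ell+1}_+(\mathbb{R},0)$ you only know that $X_f$ is $C^{2\ell}$ away from the origin and admits a Taylor expansion of order $2\ell+1$ \emph{at} the origin; consequently $1/X_f$ minus its pole-plus-logarithmic part is merely $o(1/x)$, and the primitive of an $o(1/x)$ function need not even be bounded near $0$ (think of $1/(x\log(1/x))$), let alone of class $C^{\ell}$. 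So the decomposition $\varphi_f(x)=-\sum_k q_{k,f}\,x^{-(\ell-k)}/(\ell-k)+\mathrm{Resit}(f)\log x+\psi_f(x)$ with $\psi_f\in C^{\ell}$, and a fortiori your final formula $h(x)=x+C\,(\mathrm{Resit}(f)-\mathrm{Resit}(g))\,x^{\ell+1}\log x+\tilde h(x)$ with $\tilde h\in C^{\ell+1}$, cannot be established under the hypotheses. In fact the paper's own example in \S\ref{section-ejemplo-helene} refutes them: for $\ell=1$, $X(x)=x^2$ and $Y=h^*X$ with $h(x)=x+x^2\log\log x$ are exactly $1$-flat $C^3$ vector fields with the same (vanishing) iterative residue which are \emph{not} $C^2$ conjugate; there one has $\varphi_Y(x)=-1/x+\log\log x+o(1)$, so the would-be remainder $\psi_Y$ is not even bounded. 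Put differently, your claimed form of $h$ would yield the conclusion of Theorem \ref{t:C} ($C^{\ell+1}$ conjugacy when residues coincide) already for $C^{2\ell+1}$ germs, which is precisely what that example excludes.

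The paper avoids this trap by never extracting a residue/logarithm term at all: since only $C^{r}$ regularity with $r\le\ell$ is sought, each vector field is conjugated (Proposition \ref{prop:CS}, after the polynomial reduction of Lemma \ref{l:reduc}) to the single model $X_\ell(x)=x^{\ell+1}$, and the conjugacy is written explicitly as $h(x)=x\,(1-\ell x^{\ell}u(x))^{-1/\ell}$ with $u(x)=\int_1^x \epsilon(y)y^{-\ell-1}dy+c$, where $\epsilon$ is $(r-1)$-flat and $C^{r-1}$. The whole point is that $u$ is allowed to be unbounded ($u(x)=o(x^{r-\ell-1})$); the $C^{r}$ regularity of $h$ is then wrung out of the flatness of $\epsilon$ via Lemmas \ref{l:taylor} and \ref{l:debile}. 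If you wish to keep your direct composition $\varphi_g^{-1}\circ\varphi_f$, you must give up the clean pole/log/$C^{\ell}$ splitting and instead estimate derivatives of the $o(1/x)$ remainder directly, aiming only at $C^{\ell}$ (not $C^{\ell+1}$) control of the composition; carried out honestly, that bookkeeping is essentially the computation the paper performs.
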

\addtocounter{thm}{-1}
\endgroup

In \S \ref{section-proofs}, we will give two complete and somewhat independent proofs of Theorem \ref{t:A}. A proof of Theorem \ref{t:B} (in the extended form given by Theorem \ref{t:Bp} from the Introduction) will be presented in \S \ref{section-parte2}. Here we illustrate both of them with several clarifying examples;  in particular, in  \S \ref{section-ejemplo-helene}, we give an example that shows that the converse of Theorem \ref{t:A} does not hold. The reader interested in proofs may skip most of this (somewhat long) section. 

%%%%%%%%%%%%%%%%%%%%%%%%%%%%%%%%%%%%%%%%%%%%%%%%%%%%%%%%%%%%%%%%%%%%

\subsection{A fundamental example: (non-)invariance of residues}
\label{sub-fundamental}

Let us consider the germs of 
$$f(x) = x - x^2  \qquad \mbox{ and } \qquad \, \, g(x) = \frac{x}{1+x} = x - x^2 +x^3 - x^4 + \ldots$$ 
These have different residues:
$$\mathrm{Resit} (f) = \mathrm{Resad}_1 (f) = 1 \neq 0 = \mathrm{Resad}_1 (g) = \mathrm{Resit} (g).$$
Using that $\Phi_{1}(f) = \Phi_{1} (g)$, one easily concludes that any $C^2$ conjugacy between $f$ and $g$ has to be parabolic (see the first proof below). 
Now, because of the invariance of $\mathrm{Resad}_1$ (equivalently, of the Schwarzian derivative) under conjugacies by parabolic germs of $C^3$ diffeomorphisms, we conclude that no $C^3$ conjugacy between them can exist. However, Theorems \ref{t:A} and \ref{t:B} above imply that, actually,  these germs are not $C^2$ conjugate, though they are $C^1$ conjugate. Below we elaborate on these two claims. We do this in two different ways: the former by directly looking at the conjugacy relation, and the latter by looking at the associated vector fields.

\vspace{0.3cm}

\begin{proof}[Sketch of proof that $f,g$ are not $C^2$ conjugate.] Assume that $h$ is a germ of $C^2$ diffeomorphism conjugating $f$ and $g$, that is, $hfh^{-1} = g$. Writing $h(x) = \lambda \, x + a x^2 + o(x^2)$ for $\lambda = Dh (0) \neq 0$ and $a = D^2h(0)/2$, equality $hf = gh$ translates to 
$$\lambda \, f (x) + a \, f(x)^2 + o (x^2) = h(x)- h(x)^2+o(x^2).$$
Thus, 
$$\lambda \, (x-x^2) + a \, (x-x^2)^2 + o(x^2) = (\lambda x + ax^2) - (\lambda x +ax^2)^2 + o(x^2).$$
By identifying the coefficients of $x^2$, this yields
$$- \lambda + a = a - \lambda ^2,$$ 
hence $\lambda = \lambda^2$ and, therefore, $\lambda = 1$. 

Now, knowing that $Dh(0)=1$, we deduce that there exists $C > 0$ such that 
\begin{equation}\label{C2-0}
| h(x) - x | \leq C \, x^2
\end{equation}
for small-enough $x> 0$. Now notice that \, $g^n (x) = \frac{x}{1+nx}.$ \,In particular, given any fixed $x_0' > 0$, 
\begin{equation} \label{C2-1}
g^n (x_0') \geq \frac{1}{n+D} 
\end{equation}
for a very large $D > 0$ (namely, for $D \geq 1/x_0'$) and all $n \geq 1$. In what concerns $f$, a straightforward induction argument (that we leave to the reader; see also Proposition 
\ref{prop:deviation}) shows that, for all $x_0 > 0$, there exists a (very small) constant $D' > 0$ such that, for all $n \geq 1$, 
\begin{equation} \label{C2-2}
f^n (x_0) \leq \frac{1}{n + D' \log(n)}.
\end{equation} 
Now fix $x_0 > 0$ and let $x_0' := h (x_0)$. The equality $hf^n (x_0) = g^n h(x_0)$ then yields 
$$hf^n(x_0) - f^n (x_0) = g^n (x_0') - f^n (x_0).$$
Using (\ref{C2-0}), (\ref{C2-1}) and (\ref{C2-2}), we obtain
$$\frac{C}{(n+D'\log(n))^2} \geq C (f^n(x_0))^2 \geq \frac{1}{n+D} - \frac{1}{n + D' \log (n)} = \frac{D' \log (n) - D}{ (n+D) (n + D' \log (n))},$$
which is impossible for a large-enough $n$.
\end{proof}

\vspace{0.3cm}

\begin{proof}[Sketch of proof that $f,g$ are $C^1$ conjugate.] The existence of a $C^1$ conjugacy between diffeomorphisms as $f$ and $g$ above is folklore (see for instance \cite{rog,firmo}). An argument that goes back to Szekeres \cite{szekeres} and Sergeraert \cite{sergeraert} proceeds as follows: The equality $hfh^{-1} = g$ implies $hf^nh^{-1} = g^n$ for all $n \geq 1$, hence $h = g^{-n} h f^n$. Thus, if $h$ is of class $C^1$, then
\begin{equation}\label{eq-lim}
Dh (x) = \frac{Df^n (x)}{Dg^n (g^{-n} h f^n (x))} Dh (f^n (x)) = \frac{Df^n (x)}{Dg^n (h(x))} Dh (f^n (x)).
\end{equation}
It turns out that the sequence of functions 
$$(x,y) \to A_n (x,y) := \frac{Df^n (x)}{Dg^n (y)}.$$
is somewhat well behaved. In particular, it converges to a continuous function $A$ away from the origin. Since $Dh(0)=1$ (see the  proof above), equation (\ref{eq-lim}) translates to 
\begin{equation}\label{Dh=A}
Dh (x) = A (x,h(x)).
\end{equation}
This is an O.D.E. in $h$. Now, a careful analysis shows that this O.D.E. has a solution, and this allows one to obtain the conjugacy between $f$ and $g$. See \cite{firmo} for the details.
\end{proof}

\vspace{0.3cm}

It is worth stressing that both proofs above used not just the conjugacy relation but an iterated version of it:
$$hfh^{-1} = g \quad \Rightarrow \quad hf^nh^{-1} = g^n.$$
In this regard, it may be clarifying for the reader to look for direct proofs just using the conjugacy relation in order to detect where things get stuck. The moral is that one really needs to use the underlying dynamics. Now, there are objects that encode such dynamics, namely, the vector fields whose time-1 maps correspond to the given diffeomorphisms. These were proved to exist by Szekeres \cite{szekeres} and Sergeraert \cite{sergeraert} in a much broader context. Their properties were studied, among others, by Takens in the non-flat $C^{\infty}$ case \cite{takens} and later by Yoccoz in finite regularity \cite{yoccoz}. We next illustrate how to use them with the example above.

\vspace{0.3cm}

\begin{proof}[Proof that $f,g$ are not $C^2$ conjugate using vector fields.]  
One proof for this particular case that is close to the preceding one works as follows: Let $X,Y$ be, respectively, the (unique) $C^1$ vector fields associated to $f,g$ (according to Takens and Yoccoz, these must be of class $C^{\infty}$; actually, the expression for $Y$ is explicit, as shown below). 
For $x$ close to 0, one has (see \S \ref{sub-residues-fields} for $X$):
$$X (x) = -x^2 - x^3 + o(x^3), \qquad Y(x) = -x^2.$$
Extend $X,Y$ to $\R_+ := [0, \infty)$ so that their corresponding flows $(f^t)$ and $(g^t)$ are made of $C^{\infty}$ diffeomorphisms of $\R_+$ and are globally contracting. Fix $x_0 > 0$. The map $\tau_X \!: x\mapsto \int_{x_0}^x\frac1X$ defines a $C^{\infty}$ diffeomorphism from $\R_+^* :=  \, (0,\infty)$ to $\R$ satisfying $\tau_X(f^t(x_0))=t$ by definition of the flow, so that $\tau_X^{-1}$ is the map $t\mapsto f^t(x_0)$. Obviously,  one has a similar construction for $Y$ and $(g^t)$. (We refer to \cite{eynard-navas-sternberg} for further details of this construction.) In particular, using the equalities 
$$\int_{x_0}^{f^n (x_0)} \frac{dy}{X(y)} \,\, = \tau_X (f^n(x_0)) \,\, = \,\, n \,\, = \,\, \tau_Y (g^n(x_0)) \,\, = \,\, \int_{x_0}^{g^n (x_0)} \frac{dy}{Y(y)},$$
one gets equations for the values of $f^n(x_0), g^n(x_0)$ that easily yield the estimates (\ref{C2-1}) and~(\ref{C2-2}). Then the very same arguments as in the preceding proof allow to conclude.

Another (more direct) argument of proof works as follows. If $h$ is a $C^2$ diffeomorphism that conjugates $f$ and $g$, then it must conjugate the associated flows (this is a consequence of the famous Kopell Lemma; see for instance \cite[Proposition 2.2]{eynard-navas-sternberg}). This means that \, $X \cdot Dh = Y \circ h.$ \, Writing  $h (x) = \lambda x + a x^2 + o(x^2)$, with $\lambda \neq 0$,  this gives
\begin{small}
$$(-x^2 - x^3 + o(x^3)) \cdot (\lambda + 2ax + o(x)) =  -h(x)^2 + o(x^3) = - (\lambda x + a x^2 + o(x^2))^2  + o(x^3).$$
\end{small}By identifying the coefficients of $x^2$ above, we obtain  $- \lambda = - \lambda^2$,  hence $\lambda = 1$. Next, by identifying the coefficients of $x^3$, we obtain 
$$-2a-1 = -2a - \lambda = -2a \lambda = - 2a,$$
which is absurd. 
\end{proof}

\vspace{0.12cm}

\begin{proof}[Proof that $f,g$ are $C^1$ conjugate using vector fields.] This is a direct consequence of the much more general result below. For the statement, we say that a (germ of) vector field $Z$ at the origin is {\em contracting} (resp. {\em expanding}) if its flow maps are contracting (resp. expanding) germs of diffeomorphisms for positive times. Equivalently, $Z(x) < 0$ (resp. $Z(x) > 0$) for all small-enough $x > 0$.

In all what follows, whenever $u,v$ are functions of a variable $x$ which are nonzero for $x > 0$, 
we will write $u \sim v$ if they satisfy $\lim_{x \to 0} \frac{ u(x) }{ v(x) } = 1$.

\vspace{0.1cm}

\begin{prop}
\label{p:Takens}
Let $X$ and $Y$ be two (germs of) continuous vector fields, both contracting or both expanding, that generate flows of (germs of) $C^1$  
diffeomorphisms. Suppose that there exist $r>1,s>1$ and $\alpha \neq  0,\beta \neq 0$ of the same sign satisfying $X(x)\sim \alpha x^r$ and 
$Y(x)\sim \beta x^s$ at $0$. Then $X$ and $Y$ are conjugate by the germ of a $C^1$ diffeomorphism if and only if $r=s$.
\end{prop}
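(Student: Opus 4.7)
The plan is to use the time-functions
$$\tau_X(x):=\int_{x_0}^{x}\frac{dt}{X(t)}\qquad\text{and}\qquad\tau_Y(y):=\int_{x_0}^{y}\frac{dt}{Y(t)}$$
introduced just above, and to express any possible conjugacy in their terms. For the ``only if'' direction, suppose $h$ is a germ of orientation-preserving $C^1$ diffeomorphism with $h_{*}X=Y$, i.e.\ $X(x)\,Dh(x)=Y(h(x))$ on a right-neighborhood of $0$. Writing $\lambda:=Dh(0)\neq 0$, the left-hand side is equivalent to $\alpha\lambda\,x^{r}$ as $x\to 0^{+}$, while $h(x)\sim\lambda x$ makes the right-hand side equivalent to $\beta\lambda^{s}x^{s}$. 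Matching exponents immediately forces $r=s$ (and, as a by-product, $\alpha=\beta\lambda^{r-1}$).

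For the ``if'' direction, assume $r=s$ and treat the contracting case, the expanding one being symmetric. Since $r>1$, the function $t\mapsto t^{-r}$ is not integrable at $0$, so that $\tau_X$ is a $C^1$ diffeomorphism of some $(0,\eta)$ onto $(-c,+\infty)$, and similarly for $\tau_Y$. Define
$$h:=\tau_Y^{-1}\circ\tau_X,$$
a $C^1$ diffeomorphism of $(0,\eta')$ onto its image; differentiation of $\tau_Y\circ h=\tau_X$ gives $Dh(x)=Y(h(x))/X(x)$, hence $h_{*}X=Y$ on $(0,\eta')$. The task thus reduces to extending $h$ to a germ of $C^1$ diffeomorphism at $0$ with non-zero derivative. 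The key asymptotic
$$\tau_X(x)\,\sim\,\frac{x^{1-r}}{\alpha(1-r)}\qquad\text{as } x\to 0^{+}$$
(and its analog for $\tau_Y$), combined with $\tau_Y(h(x))=\tau_X(x)$ and $r=s$, yields $h(x)/x\to\lambda:=(\alpha/\beta)^{1/(r-1)}>0$; inserting this in the formula for $Dh$ then gives
$$Dh(x)\,=\,\frac{Y(h(x))}{X(x)}\,\sim\,\frac{\beta}{\alpha}\left(\frac{h(x)}{x}\right)^{r}\,\longrightarrow\,\lambda,$$
so that $h$ extends to a germ of $C^1$ diffeomorphism at $0$ with $h(0)=0$ and $Dh(0)=\lambda\neq 0$, as required.

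The main subtlety is to derive the asymptotic of $\tau_X$ from the mere hypothesis $X(t)\sim\alpha t^r$, with no quantitative control on the error term. This however follows from the two-sided sandwich $(1-\varepsilon)|\alpha|t^r\leq|X(t)|\leq(1+\varepsilon)|\alpha|t^r$, available on an arbitrarily small right-neighborhood of $0$, together with the elementary computation $\int_x^{x_0}\frac{dt}{t^r}\sim\frac{x^{1-r}}{r-1}$: these combine to pinch $\tau_X(x)\cdot x^{r-1}$ between $(1\pm\varepsilon)/(\alpha(1-r))$, which is all we need. The rest of the argument relies only on the continuity of $X$ and $Y$ and on the fact that the flows are given.
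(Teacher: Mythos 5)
Your proof is correct and takes essentially the same route as the paper's: necessity from the conjugacy relation $X\cdot Dh=Y\circ h$ by comparing asymptotics at $0$, and sufficiency by building $h=\tau_Y^{-1}\circ\tau_X$ from the time functions and showing $h(x)/x$ has a positive limit, so that $Dh(x)=Y(h(x))/X(x)$ converges and $h$ extends $C^1$ at the origin. The only cosmetic difference is that you obtain $h(x)/x\to(\alpha/\beta)^{1/(r-1)}$ directly from $\tau_Y(h(x))=\tau_X(x)$ and continuity of the power map, where the paper first inverts the asymptotic of $\tau_Y$.
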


\vspace{0.1cm}

\begin{rem}
We will see in the proof that the condition remains necessary if one only assumes $r \geq 1, s \ge 1$. However, it is not sufficient anymore if $r=s=1$. In this case, one has to add the condition $\alpha=\beta$ and, even then, one needs an additional regularity assumption on $X$ and $Y$ (for instance, $C^{1+\tau}$ with $\tau>0$ is enough \cite{MW}, but $C^{1+bv}$ and even $C^{1+ac}$ is not \cite{eynard-navas-sternberg}).
\end{rem}

\begin{rem}
Proposition \ref{p:Takens} gives a very simple criterion of $C^1$ conjugacy for contracting smooth vector fields $X$ and $Y$ \emph{which are neither hyperbolic nor infinitely flat at $0$} (or for $C^k$ vector fields which are not $k$-flat). This criterion is simply to have the same order of flatness at~$0$ and the same ``sign'', or equivalently to satisfy that $X/Y$ has a positive limit at $0$ (though the statement above is much more general, since it does not even require the vector fields to be $C^1$~!). 

%\marginpar{\textcolor{blue}{yet...}}
We do not know whether this condition remains sufficient if one considers infinitely flat vector fields. Nevertheless, in this context it is not necessary. For example, $X(x):=e^{-\frac1{x}}$ and $Y(x):=\frac12 e^{-\frac1{2x}}$ are conjugate by a homothety of ratio $2$, but $Y(x) = \frac12\sqrt{X(x)}$, so $X/Y$ goes to $0$. It is not necessary even if one restricts to conjugacies by germs of parabolic $C^1$ diffeomorphisms. For example, consider the vector field $X(x):= e^{-\frac1{x^2}}$ and the local (analytical) diffeomorphism $h(x):=\frac{x}{1+x}$. Then $Y=h^*X$ satisfies 
$$Y(x) = \frac{(X\circ h)(x)}{Dh(x)}\sim  e^{-\frac{(1+x)^2}{x^2}} = X(x) \, e^{-\frac2x-1},$$
so $X/Y$ goes to $+\infty$.
\end{rem}

\begin{proof}[Proof of the ``necessity'' in Proposition \ref{p:Takens}.] If $h$ is a germ 
of $C^1$ diffeomorphisms conjugating $X$ to $Y$ then, for $x>0$ close to the origin,
$$Dh(x) = \frac{(Y\circ h)(x)}{X(x)}\sim \frac{\beta \, (h(x))^s}{\alpha x^r}\sim \frac{\beta \, (Dh(0))^s}{\alpha} x^{s-r}.$$
The last expression must have a nonzero limit at $0$, which forces $r = s$. 
Observe that this part of the argument works for any values of $r$ and $s$. Moreover, if $r=s=1$ then, 
since $Dh(x)$ must tend to $Dh(0)\neq0$, we have in addition $\alpha = \beta$.
\end{proof}

\begin{proof}[Proof of the ``sufficiency'' in Proposition \ref{p:Takens}.] Again, extend $X,Y$ to $\R_+ = [0, \infty)$ so that they do not vanish outside the origin and their corresponding flows $(f^t)$ and $(g^t)$ are made of $C^1$ diffeomorphisms of $\R_+$. Multiply them by $-1$ in case they are expanding, fix $x_0 > 0$, and consider the maps $\tau_X \!: x \mapsto \int_{x_0}^x\frac1X$ and $\tau_Y \!: x \mapsto \int_{x_0}^x \frac{1}{Y}$. The $C^1$ diffeomorphism $h := \tau_Y^{-1}\circ \tau_X$ of $\R_+^* \!= \, (0,\infty)$ conjugates $(f^t)$ to $(g^t)$, and thus sends $X$ to $Y$. Let us check that, under the assumption $r=s>1$, the map $h$ extends to a $C^1$ diffeomorphism of $\R_+$. For $x>0$ near $0$, one has 
$$Dh(x) = \frac{(Y\circ h)(x)}{X(x)}\sim \frac{\beta}{\alpha}\left(\frac{h(x)}{x}\right)^{\! r},$$
so it suffices to show that $\frac{h(x)}{x}$ has a limit at $0$. To do this, first observe that the improper integral $\int_{x_0}^x\frac{dy}{y^r}$ diverges when $x$ goes to $0$, so 
$$\tau_X(x) = \int_{x_0}^x\frac1X\sim \int_{x_0}^x \frac{dy}{\alpha y^r}\sim -\alpha' x^{1-r}$$
for some constant $\alpha'>0$. Similarly, $\tau_Y(x)\sim -\beta' x^{1-r}$. It follows that when $t$ goes to $\infty$, one has $t = \tau_Y(\tau_Y^{-1}(t))\sim -\beta' (\tau_Y^{-1}(t))^{1-r}$, so that 
\begin{equation}\label{eq:estrella}
\tau_Y^{-1}(t)\sim -\beta'' t^{\frac1{1-r}}. 
\end{equation}
Finally, 
$$h(x) = \tau_Y^{-1}\tau_X(x)\sim \beta'' (\tau_X(x))^{\frac1{1-r}}\sim \beta''' x$$
for some new constant $\beta'''$, which concludes the proof.
\end{proof}

\begin{rem} The inoffensive relation (\ref{eq:estrella}) above is a key step. It does not work for $r=s=1$. In this case, $x^{1-r}$ must be replaced by $\varphi (x) := \log(x)$, but the function $\varphi$ does not satisfy 
$$u(t)\sim v(t) \quad \Longleftrightarrow \quad \varphi(u(t))\sim \varphi(v(t)),$$
whereas the power functions do (as it was indeed used in the line just following (\ref{eq:estrella}))!
\end{rem}

\vspace{0.1cm}

So far we have discussed the $C^1$ conjugacy between elements in $\mathrm{Diff}^1_+ (\mathbb{R},0)$ that are exactly $\ell$-tangent to the identity for the same value of $\ell$ via two different methods: one of them is direct and the other one is based on associated vector fields.  It turns out, however, that  these approaches are somehow equivalent. Indeed, according to Sergeraert \cite{sergeraert}, the vector field 
associated to a contracting germ $f$ of class $C^2$ 
%Szekeres \cite{szekeres} has established the existence of an associated $C^1$ vector field $X$ for every contracting germ $f$ of 
%class $C^2$. This vector field is unique and, according to Sergeraert \cite{sergeraert}, it 
has an explicit iterative formula:
$$X (x) = \lim_{n \to \infty} (f^n)^* (f-id) ( x ) =  \lim_{n \to \infty} \frac{f^{n+1}(x)- f^n (x)}{Df^n (x)}$$
(we refer to \cite{eynard-navas-arcconnected} for the details and extensions of this construction). Now, if $h$ is a $C^1$ diffeomorphism that conjugates $f$ to another contracting  germ $g$ of $C^2$ diffeomorphism, then, as already mentioned, it must send $X$ to the vector field $Y$ associated to $g$, that is, $X \cdot Dh = Y \circ h$.  Using the iterative formula above, we obtain
$$Dh (x) = \lim_{n \to \infty} \frac{Df^n (x)}{f^{n+1}(x)-f^n(x)} \cdot \frac{g^{n+1}(h(x)) - g^n (h(x))}{Dg^n (h(x))}.$$
Since $hfh^{-1} = g$, we have
$$\frac{g^{n+1}(h(x)) - g^n (h(x))}{f^{n+1}(x) - f^n (x)} = \frac{ h (f^{n+1}(x))- h (f^n (x))}{f^{n+1}(x)-f^n(x)} \longrightarrow Dh (0).$$
As a consequence, if $Dh (0) = 1$, we have 
$$Dh (x) = \lim_{n \to \infty} \frac{Df^n(x)}{Dg^n (h(x))},$$
and we thus retrieve the equality $Dh(x) = A (x,h(x))$ from (\ref{Dh=A}). 
\end{proof}

%%%%%%%%%%%%%%%%%%%%%%%%%%%%%%%%%%%%%%%%%%%%%%%%%%%%%%%%%%%%%%%%%

\subsection{Examples of low regular conjugacies that preserve residues}
\label{section:ejemplo}

The statement of Theorem \ref{t:A} would be empty if all $C^{\ell + 1}$ conjugacies between parabolic germs in $\mathrm{Diff}^{2\ell + 1}_{\ell} (\mathbb{R},0) \setminus \mathrm{Diff}^{2\ell + 1}_{\ell + 1} (\mathbb{R},0)$ were automatically $C^{2\ell + 1}$. However, this is not at all the case. Below we provide the details of a specific example for which $\ell = 1$; we leave the extension to higher order of tangency to the reader.  

Actually, our example directly deals with (flat) vector fields: we will exhibit  
two of them of  class $C^3$ that are $C^2$ conjugate but not $C^3$ conjugate. The announced diffeomorphisms will be the time-1 maps of their flows.

Let $X(x) := x^2$, $h(x) := x+x^2+x^3\log x$ and $Y := h^*X$. One easily checks that $h$ is the germ of $C^2$ diffeomorphism that is not $C^3$ on any neighborhood of the origin. (This follows from the fact that the function $x\mapsto x^{k+1} \log x$ is of class $C^k$ but not $C^{k+1}$, which can be easily checked by induction.) Every map that conjugates $X$ to $Y$ equals $h$ up to a member of the flow of $X$ (this immediately follows from \cite[Lemma 2.6]{eynard-navas-sternberg}). As these members are all of class $C^{\infty}$ and $h$ is not $C^3$, there is no $C^3$ conjugacy from $X$ to $Y$.

We are hence left to show that $Y$ is of class $C^3$. To do this, we compute: 
\begin{eqnarray*}
Y(x) 
&=& \frac{(X\circ h)(x)}{Dh(x)} \\
&=& \frac{(x+x^2+x^3\log x)^2}{1+2x+3x^2\log x+ x^2} \\
&=& x^2 \cdot \frac{1+x^2+x^4(\log x)^2+2x+2x^2\log x+2x^3\log x}{1+2x+3x^2\log x+ x^2}\\
&=& x^2\left(1+\frac{-x^2\log x+2x^3\log x+x^4(\log x)^2}{1+2x+3x^2\log x+ x^2}\right)\\
&=& x^2+x^4\log x \cdot \frac{-1+2x+x^2\log x}{1+2x+3x^2\log x+ x^2}.
\end{eqnarray*}
This shows that $Y$ is of the form $Y(x) = x^2 + o( x^4 \log x)$. Showing that $Y$ is of class $C^3$ (with derivatives equal to those of $x^2$ up to order 3 at the origin) requires some extra computational work based on the fact that the function $u (x) := x^4\log x$ satisfies $D^{(3-n)} u (x) = o(x^{n})$ 
for $n \in [\![0,3]\!] := \{0,1,2,3\}$. We leave the details to the reader.

%%%%%%%%%%%%%%%%%%%%%%%%%%%%%%%%%%%%%%%%%%%%%%%%%%%%%%%%%%%%%%%%%%%%

\subsection{$C^{2\ell+1}$ germs with vanishing residue that are non $C^{\ell+1}$ conjugate}
\label{section-ejemplo-helene}

The aim of this section is to give an example showing that the converse to Theorem \ref{t:A} does not hold (for $\ell = 1$). More precisely, we will give an example of two expanding germs of $C^3$ diffeomorphisms, both exactly 1-tangent to the identity and with vanishing iterative residue, that are not $C^2$ conjugate. Notice that, by Proposition \ref{p:Takens}, these are $C^1$ conjugate.

 Again, our example directly deals with (flat) vector fields. Namely, let $X(x):= x^2$ and $Y(x) := h^* X (x)$, where $h$ is given on $\R_+^*
$ by $h(x) := x + x^2 \, \log(\log x).$ Let $f$ (resp. $g$) denote the time-1 map of $X$ (resp. $Y$). We claim that these are (germs of) $C^3$ diffeomorphisms. This is obvious for $f$ (which is actually real-analytic). For $g$, this follows from the fact that $Y$ is of class $C^3$. To check this, observe that 
 $$Y(x) = \frac{X(h(x))}{Dh (x)} = \frac{ \big( x+x^2 \log (\log x) \big)^2}{ 1 + 2 x \log(\log x) + \frac{x}{\log x} },$$
hence, skipping a few steps, 
\begin{equation}\label{eq-taylor-Y}
Y(x) = x^2 + x^3 \cdot \frac{ x \, (\log (\log x))^2 - \frac{1}{\log x}}{1 + 2 x \log(\log(x)) + \frac{x}{\log(x)} } = x^2 + o(x^3).
\end{equation}
Now, letting 
$$u(x) := x^3, \qquad v(x):= \frac{ x \, (\log (\log x))^2 - \frac{1}{\log x}}{1 + 2 x \log(\log x) + \frac{x}{\log x} },$$ 
we have $Y(x) = x^2 + (u v) (x)$, and  
$$D^{(3)} (uv) (x) = \sum_{j=0}^3 {3 \choose j} \, D^{(3-j)} u (x) \, D^{(j)} (v)(x) = \sum_{j=0}^3 c_j \, x^j \, D^{(j)} v(x)$$
for certain constants $c_j$. We are hence reduced to showing that $x^j D^{(j)}v(x)$ tends to $0$ as $x$ goes to the origin, which is straightforward and is left to the reader. 

\vspace{0.1cm}

It will follow from \S \ref{sub-residues-fields} (see equation (\ref{eq-fnvf})) that both $f$ and $g$ have vanishing iterative residue. 
They are conjugated by $h$, which is the germ of a $C^1$ diffeomorphism which is easily seen to be non $C^2$. As in the previous 
example, this implies that there is no $C^2$ conjugacy between $f$ and $g$ (despite they are both $C^3$ and have vanishing iterative residue\,!).

%%%%%%%%%%%%%%%%%%%%%%%%%%%%%%%%%%%%%%%%%%%%%%%%%%%%%%%%%%%%%%%%%%%%

\section{On the conjugacy invariance of residues}
\label{section-proofs}

In this section, we give two proofs of Theorem \ref{t:A} that generalize those given in the particular case previously treated. 
In most of the arguments of both proofs, a careful study of the associated vector fields is crucial. In particular, a key argument rests on a 
very subtle property, namely, the fact that for non-flat germs of $C^k$ diffeomorphisms, these vector fields are still $k$ times differentiable 
at the origin, despite they may fail to be of class $C^k$ (as mentioned before, they are ensured to be $C^{k-1}$). Quite surprisingly, 
this is not explicitly stated this way in Yoccoz' paper \cite[Appendice 3]{yoccoz}, though it follows directly from  \S 8 therein. 
More precisely, it corresponds to his second condition $(C_i)$ for $i=k-1$.  
(Compare \cite[Lemma 1.1]{eynard-navas-sternberg}, which is the analogous property in the hyperbolic case.)

It is worth pointing out that, however, one of our proofs avoids the use of vector fields. Although this makes it longer, it remains 
completely elementary. It is closely related to classical arguments of Fatou regarding linearizations of parabolic germs of complex 
analytic maps \cite{carlesson-gamelin}; compare \cite{maja}.

%%%%%%%%%%%%%%%%%%%%%%%%%%%%%%%%%%

\subsection{Residues and vector fields}
\label{sub-residues-fields}

We begin by recalling Yoccoz' result and, in particular, by relating the residue to their infinitesimal expression.
%\marginpar{\textcolor{blue}{A qué se refiere ``their''? the generating vector fields?}}

Let $\mathbb{K}$ be a field. 
For each integer $k \geq 1$, let us consider the group  $G_k (\mathbb{K})$ made of the expressions of the form 
$$\sum_{n=1}^k a_n x^n, \quad \mbox{ with } \, a_1 \neq 0,$$
where the product is just the standard composition but neglecting terms of order larger than $k$. The group $\widehat{\mathrm{Diff}} (\mathbb{K},0)$ has a natural morphism into each group $G_k (\mathbb{K})$ obtained by truncating series expansions at order $k$: 
$$\sum_{n \geq 1} a_n x^n \in \widehat{\mathrm{Diff}} (\mathbb{K},0) \quad \longrightarrow \quad \sum_{n=1}^{k} a_n x^n \in G_k (\mathbb{K}).$$
Notice that each group $G_k (\mathbb{K})$ is solvable and finite-dimensional. Moreover, each subgroup $G_{k,\ell} (\mathbb{K})$, $\ell < k$, obtained by truncation as above of elements in $\geri$ is nilpotent. In case where $\mathbb{K}$ is the field of real  numbers, these groups have a well-defined exponential map, and every group element is the time-1 element of a unique flow. 

Actually, it is not very hard to explicitly compute this flow for elements $\mathrm{f} \in G_{2\ell + 1,\ell } (\mathbb{K})$, that is for those of the form 
$$\mathrm{f} \, (x) = x + \sum_{n=\ell + 1}^{2\ell + 1} a_n x^n.$$
Namely, this is given by 
\begin{equation}\label{eq-vf}
\mathrm{f}^t (x) 
= x + \sum_{n=\ell + 1}^{2 \ell} t \, a_n \, x^n + \left[  \frac{\ell + 1}{2} (t \, a_{\ell})^2 - t \, \mathrm{Resad}_{\ell} (\mathrm{f})  \right] x^{2\ell + 1},
\end{equation}
where $\mathrm{Resad}_{\ell}$ is defined as in $\geri$ by (\ref{eq-resad-i})  (assuming that $\mathbb{K}$ has characteristic different from $2$). Checking that this is indeed a flow is straightforward: it only uses the additive properties of the corresponding versions of $\Phi_{\ell,i}$ (for $1\leq i \leq \ell$) and $\mathrm{Resad}_{\ell}$ in $G_{2\ell + 1}(\mathbb{K})$.  

Inspired by the work of Takens \cite{takens}, in \cite[Appendice 3]{yoccoz}, Yoccoz considered germs $f$ of non-flat $C^k$ diffeomorphisms of the real line fixing the origin. In particular, he proved that there exists a unique (germ of) $C^{k-1}$ vector field $X$ that is $k$ times differentiable at the origin and whose flow $(f^t)$ has time-1 map $f^1 = f$. 

If $k = 2\ell + 1$, by truncating $f$ at order $2\ell + 1$, we get an element $\mathrm{f}_{2\ell + 1} \in G_{2\ell + 1} (\mathbb{R})$, and we can hence consider the flow $( \mathrm{f}^t_{2\ell + 1})$ in $G_{2\ell + 1} (\mathbb{R})$ whose time-1 element is  $\mathrm{f}_{2\ell + 1}$. Let $P$ be the associated polynomial vector field defined by
$$P(x) := \frac{d}{dt}_{ |_{t=0}} \mathrm{f}^t_{2\ell + 1} (x).$$ 
As a key step of his proof, Yoccoz showed a general estimate that implies that 
$$\lim_{x \to 0} \frac{X(x) - P(x)}{x^{2\ell + 1}} = 0.$$

According to (\ref{eq-vf}), if $f \in \mathrm{Diff}^{2\ell + 1}_{\ell } (\mathbb{R},0)$ writes in the form 
$$f(x) = x + \sum_{n=\ell + 1}^{2\ell + 1} a_n x^n + o (x^{2\ell + 1}),$$
then the polynomial vector field associated to $\mathrm{f}_{2\ell + 1}$ equals 
$$P(x) = \frac{d}{dt}_{ |_{t=0}} \mathrm{f}^t_{2\ell + 1} (x) = \sum_{n=\ell + 1}^{2 \ell} a_n x^n - \mathrm{Resad}_{\ell}(f) \, x^{2\ell + 1},$$
and therefore  
\begin{equation}\label{eq-fnvf}
X (x) = \sum_{n=\ell + 1}^{2\ell} a_n x^n - \mathrm{Resad}_{\ell}(f) \, x^{2\ell + 1} + o(x^{2\ell + 1}).
\end{equation} 
This formula in which $\mathrm{Resad}_{\ell}$ explicitly appears will be fundamental for the proof of Theorem~\ref{t:A}.

\begin{rem} 
\label{rem:resit-flow}
Formula (\ref{eq:resit-flow}), which holds for every $f \in \mathrm{Diff}^{2\ell + 2}_{\ell} (\mathbb{R},0) \setminus \mathrm{Diff}^{2\ell + 2}_{\ell+1} (\mathbb{R},0)$ and all $t \neq 0$, follows from (\ref{eq-fnvf}) above. Indeed, in order to check it, we may assume that $f$ is expanding and already in reduced form, say 
$$f (x) = x + x^{\ell+1} + \mu x^{2\ell+1} + o(x^{2\ell + 1}).$$
If $X$ is the $C^{2\ell+1}$ vector field associated to $f$, then the vector field $X_t$ associated to $f^t$ is $t X$. Since 
$$X_t (x) = t \, X(x) = 
t \left[  x^{\ell+1} - \mathrm{Resad}_{\ell}(f) \, x^{2\ell + 1} + o(x^{2\ell + 1}) \right],$$
%\end{eqnarray*}
by (\ref{eq-fnvf}) we have 
$$f^t (x) = x + t \, x^{\ell+1} + \mu_t \, x^{2\ell +1} + o(x^{2\ell +1}),$$
where $\mu_t$ is such that $\mathrm{Resad}_{\ell} (f^t) = t \, \mathrm{Resad}_{\ell} (f)$. Hence, 
$$\frac{(\ell +1) \, t^2}{2} - \mu_t = t \, \left[ \frac{\ell + 1}{2} - \mu \right].$$
%\quad \Longrightarrow \quad 
%\mu_t 
%= \frac{(\ell+1) \, t^2}{2} -  t \, \left[ \frac{\ell + 1}{2} - \mu \right]$$
Therefore, for $t > 0$,
$$\mathrm{Resit} (f^t) = \frac{\ell +1}{2} - \frac{\mu_t}{t^2} 
= \frac{1}{t^2} \left[ \frac{(\ell +1) \, t^2}{2} - \mu_t \right] = \frac{t}{t^2} \, \left[ \frac{\ell + 1}{2} - \mu \right] 
= \frac{1}{t} \, \left[ \frac{\ell + 1}{2} - \mu \right] = \frac{\mathrm{Resit} (f)}{t},$$
as announced. The case $t < 0$ is analogous.

It is worth mentioning that the argument above shows that equality (\ref{eq:resit-flow}) holds for every 
$f \in \mathrm{Diff}^{2\ell + 1}_{\ell} (\mathbb{R},0) \setminus \mathrm{Diff}^{2\ell + 1}_{\ell+1} (\mathbb{R},0)$ and all $t  \neq 0$ for which $f^t$ is a germ of $C^{2\ell+1}$ diffeomorphism (or, at least, has $(2\ell+1)$ derivatives at the origin; see Remark \ref{rem:diferenciable}).
\end{rem}

%%%%%%%%%%%%%%%%%%%%%%%%%%%%%%%%%%%%%%%%%%%%%%%%%%%%%%%%%%%%%%%%%%%%%%

\subsection{A first proof of the $C^{\ell + 1}$ conjugacy invariance of $\mathrm{Resit}$ }
%in $\mathrm{Diff}^{2\ell + 1}_{\ell} (\mathbb{R},0) \setminus \mathrm{Diff}^{2\ell + 1}_{\ell + 1} (\mathbb{R},0)$}

We next proceed to the proof of the $C^{\ell + 1}$ conjugacy invariance of $\mathrm{Resit}$ between germs of $C^{2\ell + 1}$ diffeomorphisms that are exactly $\ell$-tangent to the identity.

\vspace{0.2cm}

\begin{proof}[Proof of Theorem \ref{t:A}] Let $f$ and $g$ be elements in 
$\mathrm{Diff}^{2\ell + 1}_{\ell} (\mathbb{R},0) \setminus \mathrm{Diff}^{2\ell + 1}_{\ell + 1} (\mathbb{R},0)$ conjugated by an element $h \in \mathrm{Diff}^{\ell + 1}_+ (\mathbb{R},0)$. Then they are both contracting or both expanding. We will suppose that the second case holds; the other one follows from it by passing to inverses. 

As we want to show the equality $\mathrm{Resit} (f) = \mathrm{Resit} (g)$, by the definition and Lemma \ref{lem-basic}, we may assume that $f$ and $g$ have Taylor series expansions at the origin of the form
$$f(x) = x + x^{\ell + 1} + \mu \, x^{2\ell + 1} + o (x^{2\ell + 1}), 
\qquad 
g(x) = x + x^{\ell + 1} + \mu' \, x^{2\ell + 1} + o(x^{2\ell + 1}). 
$$  
Notice that 
$$\mathrm{Resit} (f) = \mathrm{Resad}_{\ell}(f) 
= \frac{\ell + 1}{2} - \mu =: R, \qquad \mathrm{Resit} (g) = \mathrm{Resad}_{\ell} (g) = \frac{\ell + 1}{2} - \mu' =: R'.$$
In virtue of (\ref{eq-fnvf}), the vector fields $X,Y$ associated to $f,g$, respectively, have the form 
$$X(x) = x^{\ell + 1} - R \, x^{2\ell + 1} + o(x^{2\ell + 1}), 
\qquad 
Y(x) = x^{\ell + 1} - R' \, x^{2\ell + 1} + o(x^{2\ell + 1}).$$
Now write 
$$h(x) = \lambda \, x + \sum_{n =2}^{\ell + 1} c_n x^n + o(x^{\ell + 1}).$$ 
Since $h$ conjugates $f$ to $g$, it must conjugate $X$ to $Y$, that is, 
\begin{equation}\label{eq-inv-vf}
X \cdot Dh = Y\circ h.
\end{equation}
We first claim that this implies $\lambda = 1$. Indeed, the relation writes in a summarized way as 
$$(x^{\ell + 1} + o (x^{\ell + 1})) \cdot (\lambda + o(1)) = (\lambda \, x + o (x))^{\ell + 1} + o(x^{\ell + 1}).$$
By identification of the coefficients of $x^{\ell + 1}$, we obtain $\lambda = \lambda^{\ell + 1}$. Thus, $\lambda = 1$, as announced.

We next claim that $h$ must be $\ell$-tangent to the identity. Indeed, assume otherwise and let 
$2 \leq p < \ell + 1$ be the smallest index for which $c_p \neq 0$. Then relation (\ref{eq-inv-vf}) above 
may be summarized as 
$$(x^{\ell + 1} + o(x^{\ell + 1})) \cdot (1 + p \, c_p \, x^{p-1} + o(x^{p-1})) = 
(x + c_p \, x^p + o(x^p))^{\ell + 1}  + o(x^{2\ell}).$$
By identifying the coefficients of $x^{\ell+p}$ we obtain $p \, c_p = (\ell + 1) \, c_p$, which is impossible for $c_p \neq 0$.

Thus, $h$ writes in the form  
$$h(x) = x + cx^{\ell + 1} + o(x^{\ell + 1}).$$ 
Relation (\ref{eq-inv-vf}) then becomes 
\begin{small}
$$( x^{\ell + 1} - R x^{2\ell + 1} + o (x^{2\ell + 1})) \cdot (1 + (\ell + 1) c x^{\ell} + o(x^{\ell})) 
= 
(x + cx^{\ell + 1} + o(x^{\ell + 1}))^{\ell + 1} - R' (x + o(x^{\ell}))^{2\ell + 1} + o(x^{2\ell + 1}).$$
\end{small}Identification of the coefficients of $x^{2\ell + 1}$ then gives 
$$(\ell + 1) \, c - R = (\ell + 1) \, c - R'.$$ 
Therefore, $R=R'$, as we wanted to show. 
\end{proof}

%%%%%%%%%%%%%%%%%%%%%%%%%%%%%%%%%%%%%%%%%%%%%%%%%%%%%%%%%%%%%%%%%%%%%%%%%%

\subsection{Residues and logarithmic deviations of orbits}

We next characterize $\mathrm{Resit}$ for contracting germs 
$f \in \mathrm{Diff}^{2\ell + 1}_{\ell} (\mathbb{R},0) \setminus  \mathrm{Diff}^{2\ell + 1}_{\ell + 1} (\mathbb{R},0)$ 
in terms of the deviation of orbits from those of the corresponding (parabolic) ramified affine flow of order $\ell$, namely, 
$$f^t(x) := \frac{x}{\sqrt[\ell]{1 + t x^{\ell}}} 
= x - \frac{t}{\ell} x^{\ell + 1} + \frac{1}{2 \, \ell} \left( 1+\frac{1}{\ell} \right) t^2 x^{2\ell + 1} + o (x^{2\ell + 1}).$$
Notice that the time-1 map $f := f^1$ of this flow satisfies $\mathrm{Resit} (f) = 0$. 
Moreover, for all $x>0$, 
$$ \frac{1}{\sqrt[\ell]{n}} - f^n (x) 
=  \frac{1}{\sqrt[\ell]{n}} - \frac{x}{\sqrt[\ell]{1+nx^{\ell}}}$$
which asymtotically behaves (when $n$ goes to infinity) as  
$$\frac{n^{\frac{\ell-1}{\ell}}}{\ell \, n \, (1+nx^{\ell})} 
\sim \frac{1}{\ell \, n \, \sqrt[\ell]{n} \, x^\ell},$$
where the first equivalence follows from the equality \, $u^{\ell} - v^{\ell} = (u-v) (u^{\ell-1} + u^{\ell-2}v + \ldots + v^{\ell-1})$. 
In particular, letting $a = 1/\ell$, we have
$$\lim_{n \to \infty} \left[ \frac{ \ell^2 \, n \, \sqrt[\ell]{a \, \ell \, n} }{\log (n)} \left( \frac{1}{ \sqrt[\ell]{a \, \ell \, n}} - f^n (x)\right) \right] = 0.$$
This is a particular case of the general proposition below.

\vspace{0.1cm}

\begin{prop}  \label{prop:deviation}
If $\ell \geq 1$ then, for 
every contracting germ $f \in \mathrm{Diff}^{2\ell + 1}_{\ell} (\mathbb{R},0) \setminus \mathrm{Diff}^{2\ell + 1}_{\ell + 1} (\mathbb{R},0)$ 
of the form
$$f(x) = x - ax^{\ell + 1} + b x^{2\ell + 1} + o(x^{2\ell + 1}), \qquad a > 0,$$
and all $x_0 > 0$, one has
\begin{small}
\begin{equation}\label{eq-resit-desv-gen}
\mathrm{Resit} (f) 
= \lim_{n \to \infty} \left[ \frac{a \, \ell^2 \, n^2}{\log (n)} \left( \frac{1}{a \, \ell \, n} -[ f^n (x_0)]^{\ell}\right) \right] 
= \lim_{n \to \infty} \left[ \frac{ \ell^2 \, n \, \sqrt[\ell]{a \, \ell \, n} }{\log (n)} \left( \frac{1}{ \sqrt[\ell]{a \, \ell \, n}} - f^n (x_0)\right) \right].
\end{equation}
\end{small}
\end{prop}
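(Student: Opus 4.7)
The plan is to transfer the discrete iteration to the continuous-time flow $(f^t)$ generated by the Szekeres--Yoccoz vector field $X$ associated to $f$, exploiting that $f^n(x_0)=x(n)$ where $x(t):=f^t(x_0)$. By formula (\ref{eq-fnvf}), the residue enters explicitly in the Taylor expansion of $X$, and it should govern the leading correction to the naive asymptotic $x(t)\sim (\ell a t)^{-1/\ell}$. Extracting this correction yields the claimed formulas.

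\textbf{Identifying the relevant data in $X$.} A direct computation shows that for $f(x)=x-ax^{\ell+1}+bx^{2\ell+1}+o(x^{2\ell+1})$, conjugation by the homothety $H_{a^{1/\ell}}$ followed by the successive maps $x+\alpha x^k$ of Lemma~\ref{lem-basic} (none of which, for $k\le\ell$, modifies the coefficient of $x^{2\ell+1}$) gives $\mathrm{Res}(f)=b/a^2$. Hence $\mathrm{Resad}_\ell(f)=\tfrac{\ell+1}{2}a^2-b=a^2 R$, where $R:=\mathrm{Resit}(f)$. Substituting into (\ref{eq-fnvf}),
$$X(x)=-ax^{\ell+1}-a^2R\,x^{2\ell+1}+o(x^{2\ell+1}).$$

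\textbf{Linearization and asymptotics of the flow ODE.} The main move is to change variables via $y(t):=x(t)^{-\ell}$, which tends to $+\infty$ as $t\to\infty$ since $f$ is contracting. The chain rule gives
$$\frac{dy}{dt}=-\ell\,x^{-\ell-1}X(x)=\ell a+\frac{\ell a^2R}{y}+o\!\left(\tfrac{1}{y}\right).$$
The leading term forces $y(t)\sim\ell a t$. Re-injecting this first-order asymptotic into the next-to-leading term gives $dy/dt=\ell a+aR/t+o(1/t)$; integrating (using that $\int^t o(1/s)\,ds=o(\log t)$) yields $y(t)=\ell a t+aR\log t+o(\log t)$. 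Inverting,
$$[f^t(x_0)]^\ell=\frac{1}{y(t)}=\frac{1}{\ell a t}-\frac{R\log t}{a\ell^2 t^2}+o\!\left(\tfrac{\log t}{t^2}\right),$$
so that multiplying by $a\ell^2t^2/\log t$ and specializing to $t=n\in\N$ delivers the first equality in (\ref{eq-resit-desv-gen}). The second equality is then obtained from the first through the algebraic identity $u^\ell-v^\ell=(u-v)(u^{\ell-1}+u^{\ell-2}v+\cdots+v^{\ell-1})$ applied to $u:=(a\ell n)^{-1/\ell}$ and $v:=f^n(x_0)$: both quantities are asymptotic to $(a\ell n)^{-1/\ell}$, so the second factor is asymptotic to $\ell(a\ell n)^{-(\ell-1)/\ell}$, and the two expressions differ only by a multiplicative asymptotic constant determined by this factorization.

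\textbf{Main difficulty.} The delicate step is the rigorous bootstrap for the ODE: one must control the remainder $o(1/y)$ in $dy/dt$ carefully enough to see the logarithmic correction $aR\log t$ in $y(t)$ and to be sure it is not drowned by the integrated error. This rests on the fact that, by Yoccoz' approximation, $X(x)+ax^{\ell+1}+a^2Rx^{2\ell+1}=o(x^{2\ell+1})$ is a genuine little-$o$ (not merely $O$), which under the change of variable $y=1/x^\ell$ becomes a term of order $o(1/y)$ whose integral along the solution is $o(\log t)$, precisely at the right order to allow the subleading $aR\log t$ to survive. Modulo this asymptotic-integration bookkeeping, the argument is a standard two-step perturbation of an explicitly solvable linear model.
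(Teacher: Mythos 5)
Your argument is correct and follows essentially the same route as the paper: both pass to the Szekeres--Yoccoz vector field, use (\ref{eq-fnvf}) to write $X(x)=-a\,x^{\ell+1}-a^{2}\,\mathrm{Resit}(f)\,x^{2\ell+1}+o(x^{2\ell+1})$ (i.e.\ $\mathrm{Resad}_{\ell}(f)=a^{2}\,\mathrm{Resit}(f)$), and then extract the logarithmic correction to $[f^{t}(x_0)]^{-\ell}\approx a\ell t$; your bootstrap on the ODE for $y=x^{-\ell}$ is just the differential form of the paper's asymptotic evaluation of $n=\int_{x_0}^{f^{n}(x_0)}dy/X(y)$, with the same two-step structure (first $[f^n(x_0)]^{\ell}\sim 1/(a\ell n)$, then the $\log$-term). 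The passage from the first limit to the second via the identity $u^{\ell}-v^{\ell}=(u-v)(u^{\ell-1}+\cdots+v^{\ell-1})$ is treated with exactly the paper's level of detail (neither you nor the paper computes the resulting multiplicative constant explicitly), so on that step your proposal neither adds nor loses anything relative to the published argument.
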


\vspace{0.1cm} 

\begin{proof}[First proof (using coordinates at infinity).]  
Consider the map $I (z) = 1/z^{1 / \ell}$, where $z > 0$ is large-enough. Then the conjugate germ at infinity $g := I^{-1} f I$ has an expansion of the form
$$g(z) 
= \left( \frac{1}{\frac{1}{z^{1/\ell}} - \frac{a}{z^{(\ell+1)/\ell}} + \frac{b}{z^{(2\ell+1)/\ell}} + ...} \right)^{\ell} 
= z \left( \frac{1}{1 - \frac{a}{z} + \frac{b}{z^2} + \ldots} \right)^{\ell}.$$
Hence,
$$g(z) 
= z \left( 1 + \left(\frac{a}{z} - \frac{b}{z^2} - \ldots \right) + \left(\frac{a}{z} - \frac{b}{z^2} - \ldots \right)^2 + \ldots \right)^{\ell} 
= z \left( 1 + \frac{a}{z} + \frac{a^2 - b}{z^2} + O \left( \frac{1}{z^3} \right) \right)^{\ell},$$
that is 
$$g(z) = z \left( 1 + \ell \left( \frac{a}{z} + \frac{a^2 - b}{z^2} \right) + \frac{\ell (\ell-1)}{2} \left( \frac{a}{z} \right)^2 + O \left( \frac{1}{z^3}\right) \right).$$
Therefore,
\begin{equation}\label{eq-ln}
g(z) 
= z + a\ell + \frac{\ell}{z} \left[ \frac{(\ell + 1) a^2}{2} - b \right] +  O \left( \frac{1}{z^2}\right) 
= z + a\ell + \frac{R \ell}{z} +  O \left( \frac{1}{z^2}\right),
\end{equation}
where $R = \mathrm{Resad}_{\ell} (f)$. 
We claim that, for each fixed $z$, the sequence below converges as $n$ goes to infinity:
\begin{equation}\label{eq-seq}
\varphi_n (z) := g^n (z) - a\ell n - \frac{R  \log(n)}{a}.
\end{equation}
Assume this for a while. Then we have 
$$R 
= \lim_{n \to \infty} \frac{a}{ \log(n)} \big[ g^n(I^{-1}(x_0)) - a \ell n \big],$$
hence 
$$R
=  \lim_{n \to \infty} \frac{a}{ \log(n)  } \big[ I^{-1} f^n (x_0) - a \ell n \big]
=  \lim_{n \to \infty} \frac{a}{ \log(n) } \left[ \frac{1}{f^n (x_0)^{\ell}} - a \ell n \right].$$
This easily implies that 
\begin{equation}\label{eq-ln-encore}
\lim_{n \to \infty} (a \ell n)^{1/\ell} f^n (x_0) = 1.
\end{equation}
Moreover, using the identity \, 
$u^{\ell} - v^{\ell} = (u-v) (u^{\ell-1} + u^{\ell-2}v + \ldots + v^{\ell-1})$, \, one easily deduces that 
$$R 
=  \lim_{n \to \infty} \frac{ a \ell (a \ell n)^{(\ell-1)/\ell}}{\log(n)} \left[ \frac{1}{f^n (x_0)} - (a \ell n)^{1/\ell} \right] 
=  \lim_{n \to \infty} \frac{ a \ell (a \ell n)^{(\ell-1)/\ell}}{\log(n)} \left[ \frac{1}{ (a \ell n)^{1/\ell}} - f^{n}(x_0) \right] \frac{(a \ell n)^{1/\ell}}{f^n(x_0)}.$$
Using (\ref{eq-ln-encore}), one thus concludes 
$$R =  \lim_{n \to \infty} \frac{  a \textcolor{red}{\ell} (a \ell n)^{(\ell - 1)/\ell} (a \ell n)^{2/\ell} }{\log(n)} \left[ \frac{1}{ (a \ell n)^{1/\ell}} - f^{n}(x_0) \right] .$$
Therefore,
$$\mathrm{Resit} (f) 
= \frac{R}{a^2} 
= \lim_{n \to \infty} \frac{ \ell^2 n  (a \ell n)^{1/\ell} }{\log(n)} \left[ \frac{1}{ (a \ell n)^{1/\ell}} - f^{n}(x_0) \right],$$
as announced.

It remains to show that the sequence $\varphi_n(z)$ defined by (\ref{eq-seq}) 
converges as $n$ goes to infinity. First notice that, for all large-enough $z$, 
$$z + \frac{a \ell}{2} \leq g(z) \leq z + 2a\ell.$$
Hence, for a fixed $z$ and large-enough $n$,  
$$\frac{a\ell n}{2} \leq g^n(z) \leq 2 a\ell n.$$
Using (\ref{eq-ln}) and this last estimate, we obtain
\begin{eqnarray*}
\varphi_{n+1} (z) - \varphi_n (z) 
&=& g^{n+1} (z) -a\ell (n+1) - \frac{R \log(n+1)}{a} - \left[ g^n(z) - a\ell n - \frac{R \log(n)}{a} \right] \\
&=& \left[ g^n(z) + a\ell + \frac{R \ell}{g^n(z)} + O \left(\frac{1}{g^n(z)^2} \right) \right] - a\ell - \frac{R }{a} \log \left( \frac{n+1}{n} \right) - g^n(z),
\end{eqnarray*}
hence
\begin{equation}\label{eq-ln3}
\varphi_{n+1} (z) - \varphi_n (z) 
= \frac{R \ell}{g^n(z)} - \frac{R }{a} \log \left( \frac{n+1}{n} \right) + O \left(\frac{1}{g^n(z)^2} \right) 
= O \left( \frac{1}{n} \right).
\end{equation}
This estimate is still weak to prove the announced convergence, but reintroducing it in the computations above will give the desired convergence. 
Indeed, letting $\varphi_0 (z) := z$, it yields
$$|\varphi_n (z) - z| \leq \sum_{i=1}^n |\varphi_i (z) - \varphi_{i-1} (z)| = O (\log(n)).$$
Using the left-hand side equality in (\ref{eq-ln3}), the definition of $\varphi_n$ and the latter estimate, we finally obtain 
\begin{eqnarray*}
\varphi_{n+1} (z) - \varphi_n (z)  
&=& \frac{R \ell}{g^n(z)} -  \frac{R }{a} \log \left( \frac{n+1}{n} \right)  + O \left( \frac{1}{n^2} \right) \\
&=& \frac{R \ell}{a\ell n + \varphi_n (z) + R \log(n) / a} - \frac{R}{an} + O \left( \frac{1}{n^2} \right) \\
&=& \frac{ -R \, [\varphi_n (z) + R \log(n) / a]}{an (a\ell n + \varphi_n (z) + R \log(n) / a)}  + O \left( \frac{1}{n^2} \right) \\
&=& O \left( \frac{\log (n)}{n^2} \right).
\end{eqnarray*}
Since the sum $\sum \frac{\log(n)}{ n^2}$ is finite, this shows that $(\varphi_n(z))$ is a Cauchy sequence, 
which implies the announced convergence.
\end{proof}

\vspace{0.2cm}

\begin{proof}[Second proof (using vector fields).]  
We know from \S \ref{sub-residues-fields} that the vector field $X$ associated to $f$ 
has a Taylor series expansion of the form 
$$X(x) = -a x^{\ell + 1} - R x^{2\ell + 1} + o(x^{2\ell + 1}), \qquad R = \mathrm{Resad}_{\ell} (f).$$
We compute
\begin{eqnarray*}
n 
&=& \int_{x_0}^{f^n(x_0)} \frac{dy}{X(y)} \\
&=&  \int_{x_0}^{f^n(x_0)}  \frac{dy}{-a y^{\ell + 1} - R y^{2\ell + 1}} +  \int_{x_0}^{f^n(x_0)}  \left[ \frac{1}{a y^{\ell + 1} + R y^{2\ell + 1}} - \frac{1}{a y^{\ell + 1} + R y^{2\ell + 1} + o(y^{2\ell + 1})}  \right] \, dy \\
&=&  \int_{x_0}^{f^n(x_0)}  \frac{dy}{-a y^{\ell + 1} (1 + R y^{\ell} /a)} +  \int_{x_0}^{f^n(x_0)}  \left[ \frac{o(y^{2\ell + 1})}{y^{2\ell+2} (a^2 + o(y))}  \right] \, dy \\
&=&  \int_{x_0}^{f^n(x_0)}  \left[ \frac{1 - Ry^{\ell}/a + o(y^{2\ell-1})}{-a y^{\ell + 1}} \right] \, dy +  \int_{x_0}^{f^n(x_0)}  o \left( \frac{1}{y} \right)  \, dy \\
&=& \frac{1}{a \ell y^{\ell}}\Big|^{f^n(x_0)}_{x_0} +  \frac{R \, \log(y)}{a^2}\Big|^{f^n(x_0)}_{x_0} - \frac1a \int_{x_0}^{f^n (x_0)} o \left( y^{\ell-2} \right) + o (\log (f^n (x_0))) \\
&=& \frac{1}{a \, \ell \, [f^n (x_0)]^{\ell}} + \frac{R \, \log(f^n (x_0))}{a^2} + C_{x_{0}} + o (\log (f^n(x_0))),
\end{eqnarray*}
where $C_{x_0}$ is a constant that depends only on $x_0$ (and is independent of $n$). The right-side expression is of the form
$$\frac{1}{a \, \ell \, [f^n (x_0)]^{\ell}} +  o \left( \frac{1}{[f^n(x_0)]^\ell} \right),$$
which shows that, as $n$ goes to infinity,
\begin{equation}\label{eq:1/n}
[f^n (x_0)]^\ell  \sim \frac{1}{a \, \ell \, n}. 
\end{equation}
Now, from
\begin{eqnarray*}
R 
&=& \frac{a^2 \, (n-C_{x_0})}{\log (f^n (x_0))} - \frac{a}{\ell \, [f^n(x_0)]^\ell \, \log (f^n(x_0))}  + o(1) \\
&=& \frac{a^2 \, (n-C_{x_0})}{[f^n(x_0)]^\ell \, \log (f^n(x_0))} \left[ [f^n(x_0)]^\ell - \frac{1}{a \, \ell \,  (n-C_{x_0})} \right] + o(1),
\end{eqnarray*}
using (\ref{eq:1/n}) we obtain 
$$R 
= \lim_{n \to \infty} \frac{a^2 \, (n-C_{x_0})}{\frac{1}{a \, \ell \, n} \cdot \frac{1}{\ell}\log (\frac{1}{a \, \ell \, n})} 
\left[ [f^n(x_0) ]^\ell - \frac{1}{a \, \ell \, (n-C_{x_0})} \right] 
= \lim_{n \to \infty} \frac{a^3 \, \ell^2 \, n^2}{\log(n)} \left[ \frac{1}{a \, \ell \, n} - [f^n (x_0)]^\ell \right].$$
Therefore,
$$\mathrm{Resit}(f) = \frac{R}{a^2} = \lim_{n \to \infty} \frac{a \, \ell^2 \, n^2}{\log(n)} \left[ \frac{1}{a \, \ell \, n} - [ f^n (x_0) ]^\ell \right]
= \lim_{n \to \infty} \left[ \frac{ \ell^2 \, n \sqrt[\ell]{a \ell n}}{\log (n)} \left( \frac{1}{ \sqrt[\ell]{a \, \ell \, n}} - f^n (x_0)\right) \right],$$
where the last equality follows from the identity \, $u^\ell - v^\ell = (u-v) (u^{\ell-1} + u^{\ell-2}v + \ldots + v^{\ell-1})$.
\end{proof}

\vspace{0.1cm}

\begin{rem} 
It is a nice exercise to deduce the general case of the previous proposition from the case $\ell=1$ via conjugacy by the map $x \mapsto x^{\ell}$; see Remark \ref{rem:i-1}.
\end{rem}

\vspace{0.1cm}

What follows is a direct consequence of the previous proposition; checking the details is left to the reader. It is worth comparing the case $\ell=1$ of the statement with (\ref{C2-2}).

\vspace{0.1cm}

\begin{cor}  
If $\ell \geq 1$ then, for every contracting germ $f \in \mathrm{Diff}^{2\ell + 1}_\ell (\mathbb{R},0) \setminus \mathrm{Diff}^{2\ell + 1}_{\ell + 1} (\mathbb{R},0)$ of the form
$$f(x) = x - ax^{\ell + 1} + b x^{2\ell + 1} + o(x^{2\ell + 1})$$
and all $x_0 > 0$, one has
\begin{equation}\label{eq:asymp}
f^n (x_0) = \frac{1}{\sqrt[\ell]{a \ell n}} \left( 1 - \frac{[\mathrm{Resit}(f) + \delta_n] \log (n)}{\ell^2 \, n} \right),
\end{equation}
where $\delta_n$ is a sequence that converges to $0$ as $n$ goes to infinity.
\end{cor}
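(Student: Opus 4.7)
The corollary is a one-line algebraic consequence of Proposition \ref{prop:deviation}, so the plan is simply to isolate $f^n(x_0)$ in the limit expression therein. Concretely, the second equality in (\ref{eq-resit-desv-gen}) asserts that
$$\frac{\ell\, n\, \sqrt[\ell]{a\,\ell\,n}}{\log(n)}\left(\frac{1}{\sqrt[\ell]{a\,\ell\,n}} - f^n(x_0)\right) \xrightarrow[n\to\infty]{} \mathrm{Resit}(f).$$
The plan is to \emph{define} $\delta_n$ as the difference between the $n$-th term of this sequence and its limit, so that $\delta_n \to 0$ by construction, and then algebraically rearrange.

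Explicitly, I would set
$$\delta_n := \frac{\ell\, n\, \sqrt[\ell]{a\,\ell\,n}}{\log(n)}\left(\frac{1}{\sqrt[\ell]{a\,\ell\,n}} - f^n(x_0)\right) - \mathrm{Resit}(f),$$
so that the proposition gives $\delta_n \to 0$. Solving this identity for $f^n(x_0)$ yields
$$f^n(x_0) \;=\; \frac{1}{\sqrt[\ell]{a\,\ell\,n}} \;-\; \frac{[\mathrm{Resit}(f) + \delta_n]\,\log(n)}{\ell\, n\, \sqrt[\ell]{a\,\ell\,n}} \;=\; \frac{1}{\sqrt[\ell]{a\,\ell\,n}}\left(1 - \frac{[\mathrm{Resit}(f) + \delta_n]\,\log(n)}{\ell\, n}\right),$$
which is precisely (\ref{eq:asymp}).

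There is no substantial obstacle here: the content of the corollary is entirely absorbed into Proposition \ref{prop:deviation}, and the remaining step is purely a matter of factoring out $1/\sqrt[\ell]{a\,\ell\,n}$ and naming the error term. The only (very minor) point to double-check is the sign and the placement of the factor $\ell\, n\, \sqrt[\ell]{a\,\ell\,n}$ in the denominator, which follows directly from the display above.
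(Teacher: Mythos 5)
Your proposal is correct and is exactly the intended argument: the paper states the corollary as a direct consequence of Proposition \ref{prop:deviation} with details left to the reader, and defining $\delta_n$ as the deviation of the $n$-th term of (\ref{eq-resit-desv-gen}) from its limit and factoring out $1/\sqrt[\ell]{a\,\ell\,n}$ is precisely that verification (understood, as usual, for $n\geq 2$ so that $\log(n)\neq 0$).
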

%%%%%%%%%%%%%%%%%%%%%%%%%%%%%%%%%%%%%%%%%%%%%%%%%%%%%%%%%%%%%%%%%%%%%%%%%%

\subsection{A second proof of the $C^{\ell + 1}$ conjugacy invariance of $\mathrm{Resit}$}  

We can give an alternative proof of Theorem \ref{t:A} using the estimates of the previous section.

\vspace{0.1cm}

\begin{proof}[Second proof of  Theorem \ref{t:A}] 
Let $f,g$ in $\mathrm{Diff}^{2\ell + 1}_{\ell} (\mathbb{R},0) \setminus \mathrm{Diff}^{2\ell + 1}_{\ell + 1} (\mathbb{R},0)$ be elements conjugated by $h \in \mathrm{Diff}^{\ell + 1}_+ (\mathbb{R},0)$. In order to show that $\mathrm{Resit} (f)$ and $\mathrm{Resit} (g)$ coincide, we may assume that $f$ and $g$ have Taylor series expansions at the origin of the form
$$f(x) = x - x^{\ell + 1} + \mu \, x^{2\ell + 1} + o (x^{2\ell + 1}), 
\qquad 
g(x) = x - x^{\ell + 1} + \mu' \, x^{2\ell + 1} + o(x^{2\ell + 1}). 
$$  
The very same arguments of the beginning of the first proof show that $h$ must write in the form 
$$h(x) = x + cx^{\ell + 1} + o(x^{\ell + 1}).$$
In particular, for a certain constant $C > 0$, 
\begin{equation}\label{eq:C}
|h(x) - x| \leq C \, x^{\ell + 1}
\end{equation}
for all small-enough $x > 0$. Fix such an $x_0 > 0$. From $hf = gh$ we obtain $hf^n (x_0) = g^n h(x_0)$ for all $n$, which yields 
$$h f^n (x_0) - f^n(x_0) = g^n (h(x_0)) - f^n(x_0) $$
Using (\ref{eq:asymp}) and (\ref{eq:C}), this implies
\begin{equation}\label{eq:imp}
C \left[ \frac{1}{\sqrt[\ell]{a\ell n}} \left( 1 - \frac{[\mathrm{Resit} (f) + \delta_n] \log (n)}{\ell^2 \, n} \right) \right]^{\ell + 1} 
\geq \left|  \frac{[\mathrm{Resit}(g) - \mathrm{Resit}(f) + \delta_n'] \log (n)}{\ell^2 \, n \, \sqrt[\ell]{a \ell n}}  \right|
\end{equation}
for certain sequences $\delta_n,\delta_n'$ converging to $0$. On the one hand, the left-hand term above is of order $1/ (n \, \sqrt[\ell]{n})$. On the other hand, if $\mathrm{Resit}(f) \neq \mathrm{Resit}(g)$, then the right-hand term is of order 
$$\frac{|\mathrm{Resit}(g) - \mathrm{Resit}(f)| \, \log(n)}{n \, \sqrt[\ell]{n}}.$$
Therefore, if $\mathrm{Resit}(f) \neq \mathrm{Resit}(g)$, then inequality (\ref{eq:imp}) is impossible for large-enough $n$. Thus, $\mathrm{Resit}(f)$ and $\mathrm{Resit}(g)$ must coincide in case of $C^{\ell + 1}$ conjugacy.
\end{proof}

%%%%%%%%%%%%%%%%%%%%%%%%%%%%%%%%%%%%%%%%%%%%%%%%%%%%%%%%%%%%%%%%%%%%%%%%%%

\section{Existence of low regular conjugacies between germs}
\label{section-parte2}

The goal of this section is to prove Theorem \ref{t:Bp}, of which Theorem \ref{t:B} is a particular case (namely, the case $r = \ell$). To begin with, notice that, passing to inverses if necessary, we may assume that the diffeomorphisms in consideration are both expanding. Looking at the associated vector fields, one readily checks that Theorem \ref{t:Bp}  is a direct consequence of the next proposition. For the statement, given $\ell \geq 1$, we say that a (germ of) vector field $Z$ is {\em exactly $\ell$-flat} if it has a Taylor series expansion at the origin of the form
$$Z(x) = \alpha x^{\ell + 1} + o(x^{\ell + 1}), \quad \mbox{with } \alpha \neq 0.$$

\vspace{0.1cm}

\begin{prop} \label{prop:CS}
Given $\ell \ge1$ and $1\le r\le \ell$, any two germs at $0$ of exactly $\ell$-flat, expanding $C^{\ell+r}$ vector fields are $C^{r}$ conjugate.
\end{prop}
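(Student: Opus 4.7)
The approach is to construct the conjugating map $h$ explicitly via time functions, exactly as in the sufficiency part of the proof of Proposition~\ref{p:Takens}, and then to establish $C^r$ regularity at $0$ by induction. After a preliminary homothety (a $C^\infty$ operation, hence harmless), I may assume both leading coefficients equal $1$, so $X(x) = x^{\ell+1} + O(x^{\ell+2})$ and similarly for $Y$. Extend $X$ and $Y$ to complete, everywhere-positive $C^{\ell+r}$ vector fields on $\mathbb{R}_+ = [0,\infty)$, fix $x_0 > 0$, and set $\tau_X(x) := \int_{x_0}^x dy/X(y)$ and $\tau_Y(x) := \int_{x_0}^x dy/Y(y)$. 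These are $C^{\ell+r+1}$ diffeomorphisms from $\mathbb{R}_+^*$ onto $\mathbb{R}$, tending to $-\infty$ at $0$. Then $h := \tau_Y^{-1}\circ\tau_X$ is a $C^{\ell+r+1}$ diffeomorphism of $\mathbb{R}_+^*$ satisfying the conjugacy relation
\[
X\cdot Dh \;=\; Y\circ h, \qquad (\star)
\]
and the whole problem reduces to showing that $h$ extends to a $C^r$ diffeomorphism at $0$.

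The cases $k=0,1$ follow from the proof of Proposition~\ref{p:Takens}: the asymptotic $\tau_X(x) = -\frac1{\ell x^\ell} + O(\log x)$ yields $h(x)/x \to 1$, so $h(0)=0$, and then $Dh(x) = Y(h(x))/X(x) \to 1$. For $2\le k\le r$, I proceed by induction. Differentiating $(\star)$ exactly $k-1$ times via Leibniz on the left and Fa\`a di Bruno on the right, then identifying coefficients of $x^{\ell+k}$ in the resulting Taylor expansion, produces a relation of the form
\[
(k - \ell - 1)\, c_k \;=\; P_k\bigl(\text{Taylor coefficients of } X, Y \text{ up to order } \ell+k;\; c_1,\ldots,c_{k-1}\bigr),
\]
where $c_j := D^j h(0)/j!$ and $P_k$ is an explicit polynomial. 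Since $1 \le k \le r \le \ell$, the factor $k-\ell-1$ is always $\le -1$, hence nonzero; and since $\ell+k \le \ell+r$, all the Taylor coefficients involved exist because $X,Y\in C^{\ell+r}$. Thus $c_k$ is uniquely determined. To pass from this formal determination to actual $C^r$ regularity of $h$, set $P(x):= x + \sum_{j=2}^r c_j x^j$ and consider the remainder $R := h - P$; using $(\star)$, Taylor's theorem with integral remainder applied to $X$ and $Y$, and quantitative estimates on $\tau_X$ and $\tau_Y$ in the spirit of \cite[Appendice~3]{yoccoz}, one shows that $D^j R(x) = o(x^{r-j})$ as $x\to 0^+$ for all $j\le r$, which gives $h \in C^r$ at $0$ with $D^k h(0) = k!\, c_k$.

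The main obstacle — and the structural reason for the sharpness $r\le\ell$ — is precisely the factor $(k-\ell-1)$ in the inductive equation. At the critical index $k = \ell+1$ it vanishes, and the equation ceases to determine $c_{\ell+1}$; it instead becomes a \emph{compatibility condition} between $X$ and $Y$ which, upon inspection, is exactly the equality of residues $\mathrm{Res}(X) = \mathrm{Res}(Y)$, consistent with Theorem~\ref{t:A}. When the residues differ (which is allowed under the hypotheses of Proposition~\ref{prop:CS}), a sharper expansion $\tau_X(x) = -\frac{1}{\ell x^\ell} - \mathrm{Res}(X)\log x + o(\log x)$ shows that $h(x)-P(x)$ acquires a term of the form $(\mathrm{Res}(X)-\mathrm{Res}(Y))\, x^{\ell+1}\log x$, a function that is $C^\ell$ but not $C^{\ell+1}$ at the origin. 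The technical crux of the argument is therefore to control these potentially logarithmic contributions arising from integrating $1/X$ against the $x^{2\ell+1}$ term of $X$'s Taylor expansion, and to verify that they remain harmless for the derivatives of $h$ up to order $\ell$; everything stays bounded and convergent precisely as long as $k\le\ell$.
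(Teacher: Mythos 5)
Your construction of $h:=\tau_Y^{-1}\circ\tau_X$ and the formal bookkeeping are sound, and the factor $(k-\ell-1)$ in your recursion correctly identifies both why the candidate coefficients $c_k$ exist for $k\le\ell$ and why the argument must stop at $k=\ell+1$; this is exactly parallel to the paper's Lemma \ref{l:reduc}, where the inductive killing of coefficients uses $a=\alpha_s/(s-\ell)$ and hence requires $s<\ell$. The cases $k=0,1$ borrowed from Proposition \ref{p:Takens} are also fine.

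The genuine gap is the analytic core: the assertion that ``using $(\star)$, Taylor's theorem with integral remainder, and quantitative estimates on $\tau_X$ and $\tau_Y$ in the spirit of \cite{yoccoz}, one shows $D^jR(x)=o(x^{r-j})$'' is precisely the statement to be proved, and nothing in your sketch actually proves it. Controlling the intermediate derivatives of $h=\tau_Y^{-1}\circ\tau_X$ near the origin is not routine: differentiating $Dh=(Y\circ h)/X$ repeatedly produces terms with the singular factor $1/X\sim x^{-\ell-1}$ multiplied against derivatives of $h$ of all lower orders, and one needs a mechanism that converts the $(r-1)$-flatness of the perturbation into cancellation of these singularities; the citation of Yoccoz's Appendice 3 does not supply this, since those estimates concern the regularity of the vector field generated by a diffeomorphism, not the behaviour at $0$ of $\tau_Y^{-1}\circ\tau_X$. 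The paper resolves exactly this point by a different organization: it first conjugates each field (by the smooth polynomial changes of variable of Lemma \ref{l:reduc}, i.e.\ your formal step, performed honestly as conjugacies rather than as identification of putative Taylor coefficients of $h$) to the form $X(x)=x^{\ell+1}/(1+\epsilon(x))$ with $\epsilon$ of class $C^{r-1}$ and $(r-1)$-flat, and then conjugates to the single model $X_\ell(x)=x^{\ell+1}$, for which the time function is explicitly invertible. This yields the closed formula $h(x)=x\,(1-\ell\,x^\ell u(x))^{-1/\ell}$ with $u(x)=\int_1^x \epsilon(y)\,y^{-\ell-1}\,dy+c$, and the $C^r$ regularity is then reduced, via Lemma \ref{l:debile}, to that of $x\mapsto x^{\ell+1}u(x)$, which is checked by elementary flatness estimates (Lemma \ref{l:taylor} plus the computation of $x^{j}D^{(j)}$ of the relevant functions). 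To complete your proof you would either have to carry out the missing derivative estimates for the direct conjugacy, or interpolate the model field as the paper does; as written, the crux is asserted rather than established.
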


\vspace{0.1cm}

For $r=1$, this is a result concerning $C^1$ conjugacies between vector fields that should be compared to Proposition \ref{p:Takens}, yet the hypothesis here are much stronger.

\begin{rem} 
The conjugacy is no better than $C^{r}$ in general. For instance, in \S \ref{section-ejemplo-helene} there is an example of germs of exactly 1-flat, expanding, $C^3$ vector fields (case $\ell=1,$ $r=1$) that are $C^1$ conjugate but not $C^2$ conjugate, despite the time-1 maps have the same (actually, vanishing) iterative residue. It would be interesting to exhibit examples showing that the proposition is optimal for all the cases it covers.
%\marginpar{
%\textcolor{blue}{\'Etudier vraiment l'optimalit\'e, pour tous $\ell$ et $r$. Contre-exemples si on 
%demande moins de r\'egularit\'e des champs/diff\'eos, mais avec un DL?} 
\end{rem}

To prove Proposition \ref{prop:CS}, we will use a well-known general lemma concerning reduced 
forms for vector fields that is a kind of simpler version of \cite[Proposition 2.3]{takens} in finite regularity. (Compare Lemma \ref{lem-basic}, which deals with the case of diffeomorphisms.)

\vspace{0.2cm}

\begin{lem} 
\label{l:reduc}
Let $\ell \geq 1$ and $r \geq 1$, and let $Y$ be a germ of $C^{\ell + r}$ vector field admitting a Taylor series expansion of order $\ell + r$ 
at the origin that starts with $\alpha x^{\ell+1}$, where $\alpha > 0$. If $1\le r \le \ell$, then $Y$ is $C^{\infty}$ conjugate to a vector field 
of the form $x\mapsto x^{\ell+1}+o(x^{\ell+r})$.
\end{lem}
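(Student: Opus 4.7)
The plan is to produce the conjugacy as a finite composition of polynomial (hence $C^{\infty}$) diffeomorphisms that kill the non-principal Taylor coefficients of $Y$ one order at a time. First, a homothety $h_0(x) := \alpha^{-1/\ell}\, x$ normalizes the leading coefficient, since $h_0^* Y(x) = Y(\lambda x)/\lambda$ multiplies the leading coefficient by $\lambda^\ell$; after this reduction one may assume
\[
Y(x) \,=\, x^{\ell+1} + \beta_{\ell+2}\, x^{\ell+2} + \cdots + \beta_{\ell+r}\, x^{\ell+r} + o(x^{\ell+r}).
\]
If $r = 1$ there is nothing more to do; otherwise I would kill the coefficients $\beta_{\ell+j}$ successively for $j = 2, 3, \ldots, r$ by conjugating by maps of the form $h_j(x) := x + c_j\, x^j$, where $c_j$ is yet to be chosen.

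The central computation is this: for $h_j(x) = x + c\, x^j$ with $j \geq 2$, expanding $h_j^* Y(x) = Y(h_j(x))/h_j'(x)$ by means of $(x + cx^j)^m = x^m + mc\, x^{m+j-1} + O(x^{m+2j-2})$ and $1/h_j'(x) = 1 - jc\, x^{j-1} + O(x^{2j-2})$, one checks that all Taylor coefficients of orders strictly less than $\ell + j$ are left unchanged (since $h_j$ is tangent to the identity at order $j$, the perturbation of $x^{\ell+1}$ only appears at order $\ell + j$ and above), while the coefficient of $x^{\ell + j}$ transforms as
\[
\beta_{\ell+j} \,\longmapsto\, \beta_{\ell+j} + (\ell + 1)\, c - j\, c \,=\, \beta_{\ell+j} + (\ell + 1 - j)\, c.
\]
This is essentially the Lie-bracket identity $[x^j \partial_x,\, x^{\ell+1} \partial_x] = (\ell + 1 - j)\, x^{\ell+j} \partial_x$. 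Taking $c_j := -\beta_{\ell+j}/(\ell + 1 - j)$ eliminates the offending term without perturbing those already cleared in earlier steps, so that $h := h_0 \circ h_2 \circ h_3 \circ \cdots \circ h_r$ gives the required conjugacy.

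The essential point, and the only place where the hypothesis $r \leq \ell$ is needed, is the nonvanishing of $\ell + 1 - j$. The resonant value is $j = \ell + 1$, which is avoided precisely because $2 \leq j \leq r \leq \ell$, yielding $\ell + 1 - j \geq 1$. This has a transparent geometric meaning: the same method cannot kill the coefficient of $x^{2\ell + 1}$, as that would correspond to $j = \ell + 1$, consistent with the fact that this coefficient encodes the residue, which is a conjugacy invariant. The main point requiring care is the rigorous verification that successive applications do not disturb lower-order coefficients already set to zero; this reduces to the observation that $h_j(x) - x = O(x^j)$ forces $h_j^* Y - Y$ to vanish up to order $\ell + j - 1$ in the Taylor expansion, which is the first half of the display above. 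Finally, since every $h_j$ is polynomial, $h$ is $C^\infty$ and $h^* Y$ retains the $C^{\ell + r}$ regularity of $Y$ with the desired Taylor form $x^{\ell + 1} + o(x^{\ell + r})$.
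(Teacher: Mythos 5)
Your proposal is correct and follows essentially the same route as the paper: a homothety to normalize the leading coefficient, then successive conjugations by polynomial maps $x\mapsto x+c\,x^{j}$ (the paper writes $j=s+1$) chosen via the non-resonance factor $\ell+1-j\neq 0$, which is exactly where the hypothesis $r\le\ell$ enters in both arguments. The only cosmetic difference is that the paper organizes the same computation as an induction on the order of the remainder, re-reading the Taylor coefficient at each step, rather than clearing a pre-expanded list $\beta_{\ell+2},\dots,\beta_{\ell+r}$ at once.
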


\begin{proof}
For $1\le r \le \ell$, we construct by induction a sequence of $C^\infty$ conjugates $Y_1$, \dots, $Y_r$ of $Y$ such that $Y_s(x)=x^{\ell+1}+o(x^{\ell+s})$ for every $s \in [\![1,r]\!] := \{1,2,\ldots,r\}$. One first obtains $Y_1$ by conjugating $Y$ by a homothety. Assume we have constructed $Y_s$ for some $s \in [\![1,r-1]\!]$, and consider the diffeomorphism $h(x)=x+ax^{s+1}$. Since $Y_s$ is $C^{\ell+s+1}$, there exists $\alpha \in \R$ such that $Y_s(x) = x^{\ell + 1}+\alpha_s x^{\ell +s+1} + o(x^{\ell +s+1})$. Then, for $Y_{s+1}(x) := h^* Y_s(x) =  \frac{Y_s(h(x))}{Dh(x)}$, we have 
\begin{eqnarray*}
Y_{s+1}(x)
&=& \frac{(h(x))^{\ell+1}+\alpha_s (h(x))^{\ell +s+1}+o((h(x))^{\ell +s+1})}{1+a(s+1)x^{s}}\\
&=& \frac{(x+ax^{s+1})^{\ell+1}+\alpha_s (x+ax^{s+1})^{\ell +s+1}+o(x^{\ell+s+1})}{1+a(s+1)x^{s}}\\
&=& \left(x^{\ell +1}+((\ell+1)a+\alpha_s)x^{\ell +s+1}+o(x^{\ell +s+1})\right)\left(1-a(s+1)x^{s}+o(x^{s})\right)\\
&=& x^{\ell+1}+\left((\ell - s)a+\alpha_s \right)x^{\ell+s+1}+o(x^{\ell +s+1}).
\end{eqnarray*}
Since $s \leq r-1 < \ell$, we may let $a := \frac{\alpha_s}{s-\ell}$ to obtain $Y_{s+1}(x) = x^{\ell +1}+o(x^{\ell +s+1})$, which concludes the induction and thus finishes the proof. Observe that the conjugacy that arises at the end of the inductive process is a germ of polynomial diffeomorphism.
\end{proof}

\vspace{0.1cm}

We will also need a couple of elementary technical lemmas. For the statement, in analogy to the case of vector fields, we say that a real-valued function $u$ is {\em $\ell$-flat at $0$} 
if it has a Taylor series expansion at the origin of the form \, $u(x) = a x^{\ell+1} + o (x^{\ell+1}).$

\vspace{0.1cm}

\begin{lem}
\label{l:taylor} 
If $u$ is a $C^{k}$ germ of $m$-flat function at $0$, with $0 \leq m < k$, then  $x\mapsto \frac{u(x)}{x^{m+1}}$ 
extends at $0$ to a germ of $C^{k-m-1}$ map.
\end{lem}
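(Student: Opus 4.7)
The plan is to apply Taylor's formula with integral remainder. Since $u$ is a $C^k$ germ that is $m$-flat at $0$ (with $m<k$, so in particular $u$ is at least $C^{m+1}$), one has $u(0)=u'(0)=\cdots=u^{(m)}(0)=0$. The integral form of Taylor's remainder then gives
$$u(x)=\int_0^x \frac{(x-t)^m}{m!}\,u^{(m+1)}(t)\,dt.$$
The substitution $t=sx$ factors out $x^{m+1}$ cleanly, yielding
$$u(x)=\frac{x^{m+1}}{m!}\int_0^1 (1-s)^m\, u^{(m+1)}(sx)\,ds.$$

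Defining
$$v(x):=\frac{1}{m!}\int_0^1 (1-s)^m\, u^{(m+1)}(sx)\,ds,$$
this formula manifestly extends $x\mapsto u(x)/x^{m+1}$ to a function defined at $x=0$, with $v(0)=u^{(m+1)}(0)/(m+1)!$. Because $u$ is $C^k$, the derivative $u^{(m+1)}$ is $C^{k-m-1}$, so the integrand $(1-s)^m u^{(m+1)}(sx)$ and all its $x$-derivatives up to order $k-m-1$ are continuous in $(s,x)$ on $[0,1]\times I$ for a small neighborhood $I$ of $0$.

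The standard theorem on differentiation under the integral sign then gives, for $0\le j\le k-m-1$,
$$v^{(j)}(x)=\frac{1}{m!}\int_0^1 (1-s)^m s^j\, u^{(m+1+j)}(sx)\,ds,$$
each of which is a continuous function of $x$. Hence $v$ is of class $C^{k-m-1}$ on $I$, which is exactly what was claimed. There is no genuine obstacle here; the only point that needs care is verifying that the required hypotheses for differentiating under the integral (dominated continuity of $\partial_x^j$ of the integrand) hold, which is immediate from $u\in C^k$ and the compactness of $[0,1]$.
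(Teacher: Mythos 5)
Your proof is correct and follows essentially the same route as the paper: Taylor's formula with integral remainder (using that the $m$-flatness kills the derivatives of $u$ at $0$ up to order $m$), followed by differentiation under the integral sign, which is exactly the argument given there. No issues.
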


\begin{proof} 
Since $u$ is of class $C^{k}$ and $m$-flat, all its derivatives at the origin up to order $m$ must vanish. Hence, since $u$ is of class $C^{m+1}$, we have the following form of Taylor's formula: 
$$u(x) 
= 0+0\cdot x +\dots + 0\cdot x^m +\frac{x^{m+1}}{m \, !} \int_0^1(1-t)^m D^{(m+1)} u(tx)dt. 
$$
The lemma then follows by differentiating under the integral. 
\end{proof}

%\begin{lem}
%\label{l:taylor} 
%{\em Let $u$ be a $C^{\ell+r}$ germ of $\ell$-flat function at $0$, with $\ell \geq 1$, $r \ge 1$. Then $x\mapsto \frac{u(x)}{x^{\ell+1}}$ 
%extends at $0$ to a germ of $C^{r-1}$ map.}
%\end{lem}

%\begin{proof} Since $u$ is of class $C^{\ell}$ and $\ell$-flat, all its derivatives at the origin up to order $\ell$ must vanish. 
%Hence, since $u$ is of class $C^{\ell+1}$, we have the following form of Taylor's formula: 
%$$u(x) = 0+0\cdot x +\dots + 0\cdot x^\ell +\frac{x^{\ell+1}}{\ell \, !} \int_0^1(1-t)^\ell D^{(\ell+1)} u(tx)dt. $$
%The lemma then follows by differentiating under the integral. 
%\end{proof}

\vspace{0.01cm}

\begin{lem}
\label{l:debile}
Let $r\ge 1$ and let $\varphi$ be a germ of continuous function at the origin that is differentiable outside $0$ and such that $\varphi(0)=1$. If $x\mapsto x \, \varphi(x)$ is of class $C^r$, so is the function $x\mapsto x \, (\varphi(x))^\exponente$ for any $\exponente \neq 0$.
\end{lem}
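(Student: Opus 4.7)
The plan is to reduce the whole statement to the $C^{r-1}$ regularity of $\varphi$ itself near $0$, which comes almost for free from Lemma \ref{l:taylor}. Setting $\psi(x) := x\varphi(x)$, the hypothesis is that $\psi \in C^r$ with $\psi(0) = 0$; thus $\psi$ is $0$-flat at $0$, and Lemma \ref{l:taylor} (applied with $m=0$ and $k=r$) gives that $\varphi = \psi/x$ extends at $0$ to a germ of class $C^{r-1}$, with $\varphi(0) = 1$. Since $\varphi(0) = 1 > 0$, continuity yields $\varphi > 0$ on some neighborhood of $0$, so both $\varphi^\exponente$ and $\varphi^{\exponente-1}$ are well-defined and of class $C^{r-1}$ there (composition with the smooth map $t \mapsto t^\exponente$ on $(0,\infty)$).

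Next, set $\Psi(x) := x\,\varphi(x)^\exponente$. I would first observe directly from the difference quotient that $\Psi$ is differentiable at $0$ with
$$\Psi'(0) = \lim_{x \to 0} \varphi(x)^\exponente = 1,$$
while for $x \neq 0$ the product and chain rules yield
$$\Psi'(x) = \varphi(x)^\exponente + \exponente\,\varphi(x)^{\exponente-1}\,x\varphi'(x).$$

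The crux is then to rewrite $x\varphi'(x)$ so as to recognize it as the restriction of a $C^{r-1}$ germ defined on a full neighborhood of $0$. For this I would use the identity $\psi'(x) = \varphi(x) + x\varphi'(x)$, valid for $x \neq 0$, which rearranges to $x\varphi'(x) = \psi'(x) - \varphi(x)$. Since $\psi \in C^r$ we have $\psi' \in C^{r-1}$, and $\varphi \in C^{r-1}$ by the first paragraph, so $\psi' - \varphi$ is $C^{r-1}$ on a neighborhood of $0$. Consequently, the expression
$$\Phi(x) := \varphi(x)^\exponente + \exponente\,\varphi(x)^{\exponente-1}\,\big(\psi'(x) - \varphi(x)\big)$$
defines a $C^{r-1}$ germ at $0$ that agrees with $\Psi'(x)$ for $x \neq 0$ and whose value at $0$ is $1 + \exponente\cdot 1\cdot(1-1) = 1 = \Psi'(0)$. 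Hence $\Psi' = \Phi$ on a whole neighborhood of $0$, and therefore $\Psi$ is of class $C^r$, as claimed.

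The only mildly delicate point I foresee is the matching at $x = 0$: the formula for $\Psi'(x)$ is a priori only defined off the origin, and one must verify that its continuous extension agrees with the directly computed $\Psi'(0) = 1$. This matching is not automatic — it rests precisely on the vanishing $x\varphi'(x) \to 0$, which in turn follows from $\psi'(0) = \varphi(0) = 1$. In other words, the hypothesis $\varphi(0) = 1$ is what allows the otherwise wild factor $\varphi'$ to be tamed when multiplied by $x$, and it is the only place where $\varphi(0) = 1$ is essential beyond the trivial role of making $\varphi^\exponente$ well-defined.
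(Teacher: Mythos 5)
Your proof is correct and follows essentially the same route as the paper's: apply Lemma \ref{l:taylor} to get $\varphi\in C^{r-1}$, use $x\,D\varphi(x)=D(x\varphi(x))-\varphi(x)$ to see that $x\mapsto x\,D\varphi(x)$ is $C^{r-1}$, and then read off that the derivative of $x\mapsto x\,(\varphi(x))^{\exponente}$ is a combination of $C^{r-1}$ maps. Your extra verification that the continuous extension of this derivative matches the difference-quotient value at $0$ is a point the paper leaves implicit, but it is the same argument.
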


\begin{proof}
Let $u(x) := x \, \varphi(x)$ and $v(x) := x \, (\varphi(x))^\exponente$. Assume $u$ is of class $C^r$. 
If $\exponente = 1$, there is nothing to prove, so let us assume $\exponente \neq 1$. By Lemma \ref{l:taylor}, $\varphi$ is $C^{r-1}$. Now, outside $0$, 
$$Du (x) = \varphi(x) + x \, D\varphi (x)\quad\mbox{ and }\quad Dv (x) = (\varphi(x))^\exponente + \exponente \, x\, D\varphi(x) \, (\varphi(x))^{\exponente-1}.$$
By the first equality, since $Du$ and $\varphi$ are $C^{r-1}$, so is $x\mapsto x \, D \varphi (x)$. Thus, by the second equality, $Dv$ 
writes as a sum of products and compositions of $C^{r-1}$ maps, hence it is $C^{r-1}$. Therefore, $v$ is $C^r$, as announced.
\end{proof}

\vspace{0.1cm}

\begin{proof}[Proof of Theorem \ref{t:Bp}] 
Let $f$ be an expanding germ in $\mathrm{Diff}^{\ell + 1 + r}_+(\mathbb{R},0)$ that is exactly $\ell$-tangent to the identity, and let $X$ be its associated $C^{\ell + r}$ vector field. We will show that $X$ is $C^r$ conjugate to $X_{\ell}(x):=x^{\ell+1}$. This allows concluding the proof. Indeed, if $g$ is another expanding germ in $\mathrm{Diff}^{\ell + 1 + r}_+(\mathbb{R},0)$ that is exactly $\ell$-tangent to the identity, then its associated vector field $Y$ will also be $C^r$ conjugate to $X_{\ell}$. Therefore, $X$ and $Y$ will be $C^r$ conjugate, and this will allow conjugating their time-1 maps, that is, $f$ and $g$.

Now, by Lemma~\ref{l:reduc}, we may assume that $X(x)=x^{\ell+1}(1+\delta(x))$ with $\delta(x)=o(x^{r-1})$. Moreover, by Lemma \ref{l:taylor} (applied to $k = \ell + r$ and $m = \ell$), we may also assume that $\delta$ is of class $C^{r-1}$. Equivalently, we can write $X(x)= \frac{x^{\ell +1}}{1+\epsilon(x)}$,  still with $\epsilon$ of class $C^{r-1}$ and $(r-1)$-flat. To prove that $X$ is $C^{r}$ conjugate to $X_\ell$, it suffices to prove that the solutions $h$ of $Dh=X_\ell \circ h / X$ on~$\R_+^*$ (near $0$), which are of class $C^{\ell+r+1}$, extend to $C^{r}$ diffeomorphisms at the origin. Assume without loss of generality that $X$ is expanding on an interval containing the origin and $1$, and consider the solution $h$ fixing $1$. We have:
$$\int_1^{h(x)}\frac{1}{X_{\ell}} = \int_1^{x}\frac{Dh}{X_\ell \circ h}=\int_1^x \frac1X,$$
that is 
$$-\frac1{\ell \, (h(x))^\ell} = -\frac1{\ell \, x^\ell} + \int_1^x\frac{\epsilon(y)}{y^{\ell+1}}dy+c$$
for some constant $c$. Equivalently,
$$\frac1{(h(x))^\ell} = \frac1{x^\ell} - \ell \, u(x), 
\quad \mbox{ with }\quad u(x)=\int_1^x\frac{\epsilon(y)}{y^{\ell +1}}dy+c.$$
This implies
$$
{(h(x))^\ell} = \frac1{\frac1{x^\ell}(1-\ell \, x^\ell \, u(x))} =  \frac{x^\ell}{1-\ell \, x^\ell \, u(x)},
$$
so that 
$$h(x) =\frac{x}{(1-\ell \, x^\ell \, u(x))^{1/\ell}}.$$
Recall that $\epsilon$ is $C^{r-1}$, so $u$ is $C^{r}$ outside $0$. Moreover, $\epsilon$ is $(r-1)$-flat, so $\frac{\epsilon(y)}{y^{\ell +1}} =o(y^{r-\ell -2})$, and therefore, since $r \leq \ell$,  
\begin{equation}\label{eq:n=0}
u(x)=o(x^{r-\ell -1}).
\end{equation} 
In particular, $x^\ell u(x)\to 0$ as $x \to 0$, since $r\ge1$. 
%(Alternatively, we know that $h$ is parabolic so the denominator necessarily goes to $1$). 
According to Lemma~\ref{l:debile}, it thus suffices to prove that $x\mapsto x(1-\ell \, x^\ell \, u(x))$, or equivalently $v:x\mapsto x^{\ell+1} \, u(x)$, is~$C^{r}$. To do this, observe that 
$$Dv(x) = (\ell +1)x^{\ell}u(x)+x^{\ell +1} Du(x) = (\ell +1)x^{\ell}u(x)+\epsilon(x).$$
%which converges to $0$. 
% (consistent with the fact that $h$ is $C^1$, which we already knew)
We already know that $\epsilon$ is $C^{r-1}$, so we need only check that $w:x\mapsto x^\ell u(x)$ is $C^{r-1}$. On $\R_+^*$, $w$ is $C^{r}$ (as $u$) and satisfies
$$D^{(r-1)}w (x) = \sum_{n=0}^{r-1}a_n x^{\ell-(r-1-n)} D^{(n)}u (x) = \sum_{n=0}^{r}a_n x^{\ell -r+1+n} D^{(n)}u (x)$$
for some constants $a_n$. In view of this, it suffices to check that $x^{n+\ell +1-r} D^{(n)} u (x)$ has a limit at $0$ for every $n$ between $0$ and $r-1$. We have already checked this for $n=0$ (see (\ref{eq:n=0}) above). For $n\ge1$, 
\begin{eqnarray*}
x^{n+\ell +1-r} D^{(n)} u (x) 
&=& x^{n+\ell +1-r} \, D^{(n-1)} \! \left( \frac{\epsilon(x)}{x^{\ell +1}} \right) (x)\\
&=& x^{n+\ell +1-r}\sum_{j=0}^{n-1} c_{n,j} \, x^{-\ell-1-(n-1-j)} D^{(j)} \epsilon (x)\\
&=& \sum_{j=0}^{n-1}c_{n,j} \, x^{j-r+1} D^{(j)} \epsilon (x)
\end{eqnarray*}
for certain constants $c_{n,j}$. Finally, since $\epsilon$ is $(r-1)$-flat, we have $D^{(j)} \epsilon (x)=o(x^{r-1-j})$ for $j\le r-1$, which allows showing that $x^{n+\ell +1-r} D^{(n)} u (x) \to 0$ as $x \to 0$, thus completing the proof.
\end{proof}

%%%%%%%%%%%%%%%%%%%%%%%%%%%%%%%%%%%%%%%%%%%%%%%%%%%%%%%%%%%%%%%%%%%%%%%%%%

\section{Conjugacies in case of coincidence of residues}
The goal of this section is to prove Theorem \ref{t:C}. Again, passing to the associated vector fields, this will follow from the next proposition:

\begin{prop}
\label{p:takens-fini}
If $\ell \geq 1$ and $r\ge \ell +2$, then every germ of  $C^{\ell+r}$ expanding vector field that is exactly $\ell$-flat is $C^{r}$ conjugate to a (unique) vector field of the form $x\mapsto x^{\ell+1}+\mu x^{2\ell +1}$. This still holds for $r=\ell+1$ if one further assumes that the vector field has $(2\ell+2)$ derivatives at~$0$ (and is not only of class $C^{2\ell+1}$).
\end{prop}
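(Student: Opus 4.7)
My proof plan is to follow the same two-step strategy as in the proof of Proposition \ref{prop:CS}, but aiming at the target $X_\mu(x) := x^{\ell+1} + \mu x^{2\ell+1}$ in place of $X_\ell(x) = x^{\ell+1}$. The uniqueness of $\mu$ is immediate from Theorem \ref{t:A}: since $r \geq \ell+1$, any $C^r$ conjugacy is in particular a $C^{\ell+1}$ conjugacy, so the time-1 maps of $X$ and $X_\mu$ must share the same iterative residue, and this residue determines $\mu$ via $\mathrm{Resit} = (\ell+1)/2 - \mu$.

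For existence, I would first carry out a normal-form reduction extending Lemma \ref{l:reduc} past the step $s = \ell$. For every $s \in \{1, \ldots, r-1\}$ with $s \neq \ell$, the polynomial conjugacy $x \mapsto x + a\,x^{s+1}$ kills the coefficient of $x^{\ell+s+1}$ through the choice $a = \alpha_s/(s-\ell)$; at $s = \ell$ this factor vanishes and the coefficient becomes the obstruction we name $\mu$. Running the induction up to $s = r-1$, which is allowed since $X \in C^{\ell+r}$, produces a $C^\infty$-conjugate of $X$ of the form $x^{\ell+1} + \mu\,x^{2\ell+1} + E(x)$ with $E \in C^{\ell+r}$ and $E^{(j)}(0) = 0$ for every $j \leq \ell+r$. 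In the boundary case $r = \ell+1$, the extra $(2\ell+2)$-differentiability at the origin permits one further step $s = \ell+1$, yielding one more vanishing derivative on $E$. Writing $X(x) = x^{\ell+1}\bigl(1 + \mu x^\ell + \eta(x)\bigr)$ with $\eta := E/x^{\ell+1}$, Lemma \ref{l:taylor} (with $k = \ell+r$ and $m = \ell$) shows that $\eta$ is of class $C^{r-1}$ and $(r-1)$-flat at the origin, with one extra order of flatness in the boundary case.

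I would then define the conjugacy $h$, on a right neighborhood of $0$ and fixing $1$, by the flow-straightening equation
$$\int_1^{h(x)} \frac{dy}{X_\mu(y)} = \int_1^x \frac{dy}{X(y)},$$
which produces a $C^{\ell+r+1}$ diffeomorphism on $\R_+^*$ sending $X$ to $X_\mu$. Splitting $1/X = 1/X_\mu + \rho$ with $\rho(y) = -\eta(y)/[X_\mu(y)(1+\mu y^\ell + \eta(y))]$, this reads $\tau_{X_\mu}(h(x)) = \tau_{X_\mu}(x) + \tilde{I}(x)$ for $\tilde{I}(x) := \int_1^x \rho(y)\,dy$, or equivalently $h(x) = g^{\tilde{I}(x)}(x)$, where $(g^t)$ is the $C^\infty$ flow of $X_\mu$. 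Passing to the variable $w = y^\ell$, $\tau_{X_\mu}$ admits the explicit primitive $T(w) := -1/w - \mu \log w + \mu \log(1 + \mu w)$, and the identity becomes $T(h(x)^\ell) = T(x^\ell) + \ell\,\tilde{I}(x)$; this yields an explicit formula for $h(x)^\ell$ entirely analogous to the formula $h(x) = x/(1-\ell\,x^\ell\,u(x))^{1/\ell}$ from the proof of Proposition \ref{prop:CS}.

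The main obstacle is proving that $h$ extends to a $C^r$ diffeomorphism at the origin. A naive application of Lemma \ref{l:taylor} gives only $\rho \in C^{r-\ell-2}$ and hence $\tilde{I} \in C^{r-\ell-1}$, which is strictly weaker than $C^r$ for $\ell \geq 1$; the loss of $\ell+1$ orders comes from dividing $\eta$ by $y^{\ell+1}$. To close this gap I would repeat the end of the proof of Proposition \ref{prop:CS} with $X_\ell$ replaced by $X_\mu$: by Lemma \ref{l:debile}, the $C^r$-regularity of $h$ reduces to that of a weighted integral $v(x) := x^{\ell+1}\,\tilde{I}(x)$ (or its $T$-coordinate analogue suggested by Step~2), which in turn is established by computing $D^{(r-1)} v$ via the Leibniz rule and showing that each of the resulting terms $c_{n,j}\,x^{j-r+1}\,D^{(j)}\eta(x)$ with $0 \leq j \leq r-1$ tends to $0$ at the origin as a consequence of the $(r-1)$-flatness of $\eta$. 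The hypothesis $r \geq \ell+2$, or $r = \ell+1$ with $(2\ell+2)$ derivatives at $0$ supplying the missing order of flatness, is exactly what is needed for this bootstrap to close.
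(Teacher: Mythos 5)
Your overall strategy (straighten the time function of $X$ onto that of $X_\mu(x)=x^{\ell+1}+\mu x^{2\ell+1}$, as in the proof of Proposition \ref{prop:CS}) is genuinely different from the paper's proof, which uses a Takens-style homotopy method: one joins $X_0$ to $X$ by the path $X_y=X_0+yx^{2\ell+1}\epsilon(x)$, looks for a commuting vector field $Z=x^{\ell+1}H(x,y)\,\partial_x+\partial_y$, reduces the commutation $[Z,Y]=0$ to a linear ODE in $x$ with parameter $y$, and then gains the missing $\ell+1$ orders of differentiability by an induction showing that $x^sH$ is $C^{r-\ell+s-1}$ for $s=0,1,\dots,\ell+1$. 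Unfortunately, your route has a genuine gap exactly at the point where that machinery is needed. In Proposition \ref{prop:CS} the target field is $x^{\ell+1}$, whose time function is $-1/(\ell x^\ell)$, so the conjugacy admits the closed form $h(x)=x\,(1-\ell x^\ell u(x))^{-1/\ell}$ and Lemma \ref{l:debile} applies. For $X_\mu$ with $\mu\neq 0$ the time function is $T(x^\ell)$ with $T(w)=-\tfrac1w-\mu\log w+\mu\log(1+\mu w)$ (up to constants): the logarithmic terms make $T$ non-invertible in closed form, so there is \emph{no} ``explicit formula for $h(x)^\ell$ entirely analogous'' to the one in Proposition \ref{prop:CS}. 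What one gets instead is the implicit relation $\tfrac1{h^\ell}=\tfrac1{x^\ell}-\ell\bigl[\tilde I(x)+A(x)-A(h(x))\bigr]$ with $A(x)=\mu\log\bigl((1+\mu x^\ell)^{1/\ell}/x\bigr)$, or equivalently $h(x)=g^{\tilde I(x)}(x)$ with $(g^t)$ the flow of $X_\mu$; in either form $h$ appears on both sides (through $A(h)$, resp.\ through the composition with the flow), so the asserted reduction ``by Lemma \ref{l:debile}, the $C^r$-regularity of $h$ reduces to that of $v(x)=x^{\ell+1}\tilde I(x)$'' is not justified — the hedge ``or its $T$-coordinate analogue'' is precisely the missing argument. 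Note also that, unlike in Proposition \ref{prop:CS}, here $\tilde I$ extends continuously (indeed $C^{r-\ell-1}$) to $0$, so the naive regularity of $h=g^{\tilde I(x)}(x)$ is only $C^{r-\ell-1}$, and one must gain $\ell+1$ derivatives by exploiting the $(\ell+1)$-flatness of $\partial_t g^t$ in $x$; a single Leibniz computation on $x^{\ell+1}\tilde I$ does not obviously close this, and making it work (e.g.\ via the factorization $g^{J(x)}(x)=x+x^{\ell+1}J(x)\hat K(J(x),x)$ with $J=\tilde I-\tilde I(0)$, followed by an induction on the weights $x^sJ$) amounts to rebuilding, in this setting, the inductive bootstrap that the paper carries out for $x^sH$.

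Two smaller points. First, your normal-form step is fine and matches Lemma \ref{l:reduc2}, including the use of the extra $(2\ell+2)$-differentiability when $r=\ell+1$ to get one more order of flatness for the error. Second, your uniqueness argument via Theorem \ref{t:A} is sound in substance, but the formula is off: for the vector field normal form $x^{\ell+1}+\mu x^{2\ell+1}$, the time-one map $f$ satisfies $\mathrm{Resit}(f)=-\mu$ (by (\ref{eq-fnvf})), not $\mathrm{Resit}=(\ell+1)/2-\mu$; the latter relates $\mathrm{Resit}$ to the coefficient in the \emph{diffeomorphism} normal form. The conclusion that $\mathrm{Resit}$ determines $\mu$ is unaffected.
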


\begin{rem}
In the case $r=\ell+1$, the hypothesis of existence of $(2\ell+2)$ derivatives at the origin is necessary. Indeed, in \S \ref{section-ejemplo-helene}, there is an example of a $1$-flat $C^3$ vector field that is not $C^2$ conjugate to its normal form; this provides a counterexample for $\ell=1$ and $r=2=\ell+1$.
\end{rem}

\begin{proof}[Proof of Theorem \ref{t:C} from Proposition \ref{p:takens-fini}] 
Let $f$ and $g$ be two expanding germs as in the theorem, and let $X$ and $Y$ be their associated vector fields. By Proposition \ref{p:takens-fini}, these are $C^r$ conjugate to some $X_0$ and $Y_0$ of the form $x\mapsto x^{\ell+1}+\mu x^{2\ell+1}$ and $x\mapsto x^{\ell+1}+\nu x^{2\ell+1}$, respectively. Now, according to \S \ref{sub-residues-fields} and the hypothesis of coincidence of residues, 
$$\mu=-\mathrm{Resit}(f)=-\mathrm{Resit}(g)=\nu.$$ 
Thus, $X_0 = Y_0$ and, therefore, $X$ and $Y$ are $C^r$ conjugate, and so $f$ and $g$ as well.
\end{proof}

\vspace{0.1cm}

To prove Proposition \ref{p:takens-fini}, we first need a version of Lemma \ref{l:reduc} for large derivatives.

\vspace{0.1cm}

\begin{lem}\label{l:reduc2}
Let $\ell \geq 1$ and $r \geq 1$, and let $Y$ be a germ of $C^{\ell + r}$ vector field admitting a Taylor series expansion of order $\ell + r$ at the origin that starts with $\alpha x^{\ell+1}$, where $\alpha > 0$. If $r \ge \ell+1$, then $Y$ is smoothly conjugate to a vector field of the form $x\mapsto x^{\ell+1}+\mu x^{2\ell+1}+o(x^{\ell + r})$.
\end{lem}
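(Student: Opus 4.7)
The plan is to adapt the inductive procedure used in the proof of Lemma \ref{l:reduc}, pushing past the critical index $s=\ell$ where a genuine obstruction appears. Starting from $Y$, a homothety first normalizes the leading coefficient to produce $Y_1(x)=x^{\ell+1}+o(x^{\ell+1})$. Then, for $s=1,\dots,\ell-1$, I apply the exact same conjugacy $h(x)=x+ax^{s+1}$ as in the proof of Lemma~\ref{l:reduc}, with $a=\alpha_s/(s-\ell)$, to successively produce $Y_s(x)=x^{\ell+1}+o(x^{\ell+s})$. These steps are unaffected by the hypothesis $r\ge \ell+1$.

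The key new step is at $s=\ell$: the recursive formula from the proof of Lemma~\ref{l:reduc} shows that conjugating $Y_\ell$ by $h(x)=x+ax^{\ell+1}$ produces a coefficient $(\ell-s)a+\alpha_s=\alpha_\ell$ in front of $x^{2\ell+1}$, which is independent of $a$. Hence this coefficient is a genuine obstruction; I simply set $\mu:=\alpha_\ell$ and take the identity conjugacy, so that $Y_{\ell+1}:=Y_\ell=x^{\ell+1}+\mu x^{2\ell+1}+o(x^{2\ell+1})$.

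For $s=\ell+1,\dots,r-1$, I continue with $h(x)=x+ax^{s+1}$ and need to verify two things: (i) the coefficient of $x^{\ell+s+1}$ in the conjugate is still $(\ell-s)a+\alpha_s$ despite the presence of the $\mu x^{2\ell+1}$ term, so the choice $a=\alpha_s/(s-\ell)$ kills it (this uses $s\neq\ell$); and (ii) the coefficient of $x^{2\ell+1}$ is preserved across the conjugation. Both follow from a direct expansion: writing $Y_s(h(x))/Dh(x)$ as the product of $h(x)^{\ell+1}+\mu h(x)^{2\ell+1}+\alpha_s h(x)^{\ell+s+1}+\cdots$ with $1-(s+1)ax^s+o(x^s)$, the only contributions to the coefficient of $x^{\ell+s+1}$ come from $x^{\ell+1}\bigl(1+(\ell+1)ax^s\bigr)\cdot\bigl(1-(s+1)ax^s\bigr)$ and from $\alpha_s x^{\ell+s+1}$, while the perturbation of the $\mu x^{2\ell+1}$ block only shows up at order $2\ell+1+s$, which is strictly greater than $\ell+s+1$ since $\ell\ge 1$. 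Hence $\mu$ is preserved and $\alpha_s$ is killed.

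The main (mild) obstacle is exactly this preservation check: once the invariant $\mu$-term is present, one must confirm that further conjugacies do not disturb it. This is immediate from the order count $2\ell+1+s>\ell+s+1$, which is the whole reason why the procedure works. Since each $h$ is a polynomial diffeomorphism, their composition is again a polynomial (in particular $C^\infty$) diffeomorphism, so $Y$ is smoothly conjugate to a vector field of the form $x^{\ell+1}+\mu x^{2\ell+1}+o(x^{\ell+r})$, as announced; the error $o(x^{\ell+r})$ is inherited from the $C^{\ell+r}$ regularity of $Y$, which is preserved under polynomial conjugacy.
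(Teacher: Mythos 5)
Your proof is correct and follows essentially the same route as the paper's: reduce to $x^{\ell+1}+o(x^{2\ell})$ by the argument of Lemma \ref{l:reduc}, read off $\mu$ as the order-$(2\ell+1)$ Taylor coefficient (using $C^{2\ell+1}$ regularity), and then continue the induction with polynomial conjugations $x\mapsto x+ax^{s+1}$, which now succeed because $s>\ell$, while the order count $2\ell+s+1>\ell+s+1>2\ell+1$ shows the $\mu x^{2\ell+1}$ term is untouched. You have simply written out the details that the paper leaves to the reader.
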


\begin{proof}
Thanks to Lemma \ref{l:reduc}, we can start with $Y$ of the form $Y(x) = x^{\ell +1}+ o(x^{2\ell})$, which can be rewritten in the form $Y(x) = x\mapsto x^{\ell+1}+\mu x^{2\ell+1}+ o(x^{2\ell+1})$ since $Y$ is of class $C^{2\ell+1}$. One then constructs conjugate vector fields $Y_\ell :=Y, \ldots , Y_{r-1}$ such that each $Y_s$ is of the form $x\mapsto x^{\ell+1}+\mu x^{2\ell+1}+ o(x^{\ell +s+1})$. The construction is made by induction by letting $Y_{s+1}=h^* Y_s$, with $h(x)=x+ax^{\ell +s+2}$ for some well-chosen $a$. Details are left to the reader. 
\end{proof}

\vspace{0.1cm}

\begin{proof}[Proof of Proposition \ref{p:takens-fini}] 
Thanks to Lemma \ref{l:reduc2}, we can start with a vector field $X$ globally defined on $\mathbb{R}_+$ that has the form 
$$X(x) = x^{\ell+1}+\mu x^{2\ell+1}+o(x^{\ell + r}).$$
We then define the function $\epsilon$ by the equality
$$X(x) = x^{\ell +1}+\mu x^{2\ell +1}+x^{2\ell + 1}\epsilon(x).$$ 
%with $\epsilon (x) \to 0$ as $x \to 0$. 
We want to prove that $X$ is $C^{r}$ conjugate to $X_0 (x) := x^{\ell+1}+\mu x^{2\ell + 1}$. For $y\in[0,1]$, let $X_y (x) := X_0(x)+yx^{2\ell + 1}\epsilon(x)$, so that $X_1=X$. Let $Y$ be the horizontal $C^{\ell + r}$ vector field on $S := \R_+\times[0,1]$ defined by $Y(x,y)=X_y(x) \, \partial_x + 0 \cdot \partial_y$. \medskip

We claim that it suffices to find a $C^{r}$ vector field $Z$ on $S$ of the form $(x,y)\mapsto K(x,y) \, \partial_x + \partial_y$, with $K$ vanishing on $\{0\}\times[0,1]$ and $[Z,Y]=0$ near $\{0\}\times[0,1]$. To show this, denote by $\phi_Z^t$ the flow of such a $Z$. Clearly, $\phi_Z^1 |_{|\R_+\times \{0\}}$ is of the form $(x,0)\mapsto (\varphi(x),1)$ for some $C^{r}$ diffeomorphism $\varphi$ of $\R_+$. We claim that $\varphi_* X_0 = X_1$. Indeed, the equality $[Z,Y]=0$ implies that the flows of $Z$ and $Y$ commute near the ``vertical'' segment $\{0\}\times [0,1]$. Let $\mathcal{U}$ be a set of the form $\{\phi_Z^t(x,0): (x,t)\in [0,\eta] \times [0,1]\}$ where these flows commute. Then, if $f_0^1$ and $f_1^1$ denote the time-$1$ maps of $X_0$ and $X_1$, respectively, for every $x\in f_0^{-1}([0,\eta])$, 
$$(\varphi(f_0(x)),1)=\phi_Z^1(f_0(x),0) = \phi_Z^1(\phi_Y^1(x,0)) = \phi_Y^1(\phi_Z^1(x,0))= \phi_Y^1(\varphi(x),1) = (f_1(\varphi(x)),1).$$
Therefore, $\varphi$ conjugates $f_0$ to $f_1$, and thus $X_0$ to $X_1$ (by the uniqueness of the vector fields associated to diffeomorphisms), as required. \medskip

We are thus reduced to proving the existence of a $Z$ as above. Let us leave aside the questions of regularity for a while and assume that all involved functions are $C^\infty$. In this case, according for example to \cite[Proposition 2.3]{takens}, the function $\epsilon$ can be assumed to be of the form $x\mapsto x \, \delta(x)$, with $\delta$ smooth at $0$. We look for $Z$ of the form 
$$Z(x,y)=x^{\ell + 1}H(x,y)\partial_x+\partial_y.$$
 Let $F(x) := 1+\mu x^\ell$, so that $X_0(x)=x^{\ell + 1}F(x)$. If $(Y_x,Y_y)$ and $(Z_x,Z_y)$ denote the coordinates of $Y$ and $Z$, respectively, then we have $[Z,Y] = \psi(x,y)\partial_x+0\cdot\partial_y$, with
\begin{eqnarray*}
\psi(x,y) 
&=& \frac{\partial Y_x}{\partial x}Z_x - \frac{\partial Z_x}{\partial x}Y_x+\frac{\partial Y_x}{\partial y}Z_y \\
&=& \left[(\ell+1)x^\ell F(x) + (2\ell + 1)x^{2\ell}y\epsilon (x)+yx^{2\ell +1} D\epsilon(x)\right]\times x^{\ell +1}H(x,y) \\
&\qquad \,\, -& \left((\ell+1)x^{\ell}H(x,y)+x^{\ell +1}\frac{\partial H}{\partial x}(x,y)\right)\times\left(x^{\ell + 1}F(x)+x^{2\ell + 1}y\epsilon (x)\right) 
\, + \,  x^{2\ell + 1}\epsilon (x)\\
&=&  x^{2\ell + 2}\left(\frac{\partial H}{\partial x}(x,y)(F(x)+x^\ell y\epsilon (x))
+H(x,y)y\left( \ell x^{\ell -1}\epsilon (x) + x^\ell D \epsilon(x)\right)+\delta(x)\right)
\end{eqnarray*}
(the terms involving $x^{2\ell +1}$ cancel). We hence need to solve an equation of the form 
\begin{equation}
\label{e:EDL}
a(x,y) \frac{\partial H}{\partial x}(x,y)+b(x,y)H(x,y)+\delta(x)=0,
\end{equation}
with 
$$a(x,y) := F(x)+x^\ell y\epsilon (x), 
\qquad b(x,y) := y \, D(x^{\ell} \epsilon(x)), 
%\left( \ell x^{\ell -1}\epsilon (x) + x^\ell D\epsilon(x)\right), 
\qquad \delta (x) := \frac{\epsilon(x)}{x}.$$
This is a family of linear differential equations with variable $x$ and parameter $y$, with $a$ nonvanishing near $\{0\}\times [0,1]$ and $a$, $b$ and $\delta$ smooth (for now). The resolution of such equations ensures the existence of a smooth solution $H$ (which is not unique, because of the choice of integration constants), which concludes the proof in the smooth case. \medskip

Let us now concentrate on our case, where $\ell \geq 1$, $r\ge \ell+1$ and $X$ is only $C^{\ell+r}$ (with $(2\ell + 2)$ derivatives at $0$ if $r = \ell + 1$). We want to check that, in this case, there exists $H$ satisfying (\ref{e:EDL}) such that $(x,y)\mapsto x^{\ell+1}H(x,y)$ is $C^{r}$. 

\medskip

The existence of an $H$ satisfying (\ref{e:EDL}) that is (smooth in $y$ and) $C^1$ in $x$ follows from that $a,b$ and $\delta$ above are (smooth in $y$ and) continuous in $x$, as we now check. Since we already know that $\epsilon$ is as regular as $X$ away from $0$, that is, $C^{\ell + r}$, we are left with checking that $x\mapsto x^\ell\epsilon(x)$ and $x\mapsto \frac{\epsilon(x)}x$ extend respectively to a $C^1$ and a $C^0$ function near $0$. The first point follows from Lemma \ref{l:taylor} applied to $u(x):=x^{2\ell+1}\epsilon(x)$, $k=\ell+r$ and $m=\ell$ (which actually shows that $x\mapsto x^\ell\epsilon(x)$ is $C^{r-1}$ with $r-1\ge \ell\ge 1$). For the second point, if $r\ge \ell+2$, we know that
$$x^{2\ell + 1} \epsilon(x) = o(x^{\ell+r})$$
and $\ell+r\ge 2\ell+2$, hence by dividing each term by $x^{2\ell + 1}$ we get $\epsilon(x)=o(x)$, so $\delta$ indeed extends as a continuous function at $0$. If $r=\ell+1$, this is precisely where we use the additional assumption that $X$ is $(2\ell+2)$ times differentiable at $0$, which by Lemma \ref{l:reduc2} implies that one can start with $X$ of the form 
$$X(x) = x^{\ell+1}+\mu x^{2\ell+1}+o(x^{2\ell+2}),$$
%(and not only $o(x^{2\ell+1})$, \emph{i.e.} $o(x^{\ell+r})$, as in the case $r\ge \ell+2$), 
hence  $x^{2\ell+1}\epsilon(x)=o(x^{2\ell+2})$ and thus again $\epsilon(x)=o(x)$. \medskip

To conclude the proof, we finally need to show that $(x,y)\mapsto x^{\ell+1} H(x,y)$ is $C^r$. To do this, we will actually prove by induction on $s \in [\![0,\ell + 1]\!]$ that $H_s \!: (x,y)\mapsto x^sH(x,y)$ is $C^{r-\ell+s-1}$. \medskip

\noindent \emph{Case $s=0$}. We have already given part of the ingredients for this initial case. If $r=\ell+1$, then $r-\ell+0-1=0$, and we already know that $H_0=H$ is continuous. 

Now consider $r\ge \ell+2$. The theory of linear differential equations tells us that $H=H_0$ has one degree of differentiability more than the coefficients of the equation, so it suffices to check that these are $C^{r-\ell-2}$. We have already seen that $a$ and $b$ in \eqref{e:EDL} were respectively of class $C^{r-1}$ and $C^{r-2}$, so \emph{a fortiori} $C^{r-\ell-1}$ since $\ell\ge 1$. Now applying Lemma \ref{l:taylor} to $u(x):=x^{2\ell+1}\epsilon(x)$, $k=\ell+r$ and $m=2\ell$, we get that $\epsilon$ is $C^{r-\ell-1}$. Moreover, since $x^{2\ell + 1} \epsilon(x) = o(x^{\ell+r})$, we have $\epsilon(x)=o(x^{r-\ell-1})$, so $\epsilon$ is $(r-\ell-1)$-flat. Applying Lemma \ref{l:taylor} again but this time to $u:=\epsilon$, $k=r-\ell-1\ge 1$ and $m=0$, we get that $\delta:x\mapsto \frac{\epsilon(x) }x$ is $C^{r-\ell-2}$, as required.\medskip

\noindent \emph{Inductive step}. Assume now that the induction hypothesis is true for some $0 \le s \le \ell$. Then 
$$\frac{\partial H_{s+1}}{\partial x}(x,y) = (s+1)x^s H(x,y) + x^{s+1}\frac{\partial H}{\partial x}(x,y).$$
The first term of the right hand side is $C^{r-\ell+s-1}$ by the induction hypothesis, and the second is equal to
$$
x^{s+1}\left(-\frac{b(x,y)}{a(x,y)}H(x,y) -\frac{c(x,y)}{a(x,y)}\right)
= -\frac{x\cdot b(x,y)}{a(x,y)}H_s (x,y) -\frac{x^{s+1}c(x,y)}{a(x,y)}.
$$
Since $a$ is $C^{r-1}$ and thus $C^{r-\ell+s-1}$, we are left with proving that the numerators of the fractions, namely
$$y (\ell x^\ell\epsilon (x)+x^{\ell+1} D\epsilon(x)) \qquad \text{and}\qquad x^s \epsilon (x),$$ 
are $C^{r-\ell+s-1}$. For the second one, this follows from Lemma \ref{l:taylor} still applied to $u(x):=x^{2\ell+1}\epsilon(x)$ and $k=\ell+r$ but this time with $m=2\ell-s\le 2\ell<k$. A last application with $m=r-2<k$ shows that $x\mapsto x^{\ell+1}\epsilon(x)$ is $C^{\ell+r-m-1}=C^{r+1}$, so that the first numerator is $C^r$ and \emph{a fortiori} $C^{r-\ell+s-1}$ since $s\le \ell$. This concludes the induction and thus the proof.
\end{proof}

\begin{rem}
\label{r:final}
Proposition \ref{p:takens-fini} implies Takens' normal form result: if $X$ is a $C^{\infty}$ expanding vector field that is exactly $\ell$-flat for some $\ell \geq 1$, then $X$ is $C^{\infty}$ conjugate to a vector field of the form $x \mapsto x^{\ell+1} + \mu \, x^{2\ell +1}$ for a unique $\mu$. Indeed, we have shown the existence of such a $C^r$ conjugacy $\varphi_r$ for each $r \geq \ell + 2$. However, the conjugacy is unique up to composition with a member of the flow of $X$, which is a $C^{\infty}$ diffeomorphism. Therefore, $\varphi_r$ and $\varphi_{r+1}$ differ by the composition of a $C^{\infty}$ diffeomorphism, which easily implies that all the $\varphi_r$ are actually $C^{\infty}$. 

An analogous result holds for diffeomorphisms: any two $C^{\infty}$ germs that are exactly $\ell$-flat, both expanding or both contracting, and have the same iterative residue, are $C^{\infty}$ conjugate. Rather surprisingly, Takens' proof of this fact passes through the famous Borel Lemma on the realization of sequences of numbers as derivatives at the origin of $C^{\infty}$ germs, while our argument above avoids this.
\end{rem}

%%%%%%%%%%%%%%%%%%%%%%%%%%%%%%%%%%%%%%%%%%%%%%%%%%%%%%%%%%%%%%%%%%%%%%%%%%

\section{Residues and distortion}
\label{section-last-question}

Recall that an element $g$ of a finitely-generated group is said to be {\em distorted} if $g^n$ may be written 
as a product of factors among the generators and their inverses whose total number grows sublinearly with respect 
to $n$. (This definition does not depend on the chosen generating system.) An element of a general group is a 
{\em distortion element} if it is distorted inside some finitely-generated subgroup. The next question is inspired from \cite{navas}:

\begin{qs} 
What are the distortion elements of the group $\mathrm{Diff}^{\omega}_+ (\mathbb{R},0)$ 
of germs of (orientation-preserving) real-analytic diffeomorphisms fixing the origin~?
\end{qs}

An example of distortion element in the group of germs above is 
$$g (x) := \frac{x}{1+x} = x - x^2 + x^3 - x^4 +\ldots$$
Indeed, letting $h (x) = \frac{x}{2}$, one has 
\begin{equation}\label{eq:affine-flow}
h \, g \, h^{-1} = g^2,
\end{equation}
which easily yields $g^{2^m} = h^m g h^{-m}$ for all $m \geq 1$. Using this, it is not hard to conclude that $g^n$ may be written as a product of $O(\log(n))$ factors $g^{\pm 1}, h^{\pm 1}$. The case of the map below is particularly challenging.

\begin{qs} 
Is the germ $f(x) := x - x^2$ a distortion element of $\mathrm{Diff}^{\omega}_+ (\mathbb{R},0)$~?
\end{qs}

Observe that  $f$  is a distortion element of the much larger group $\mathrm{Diff}^1_+(\mathbb{R},0)$. Indeed, by Theorem \ref{t:B}, it is $C^1$ conjugated to $g$, which is a distortion element. Actually, this is a particular case of the much more general result below.

\begin{prop} 
Every (nontrivial) parabolic germ of real-analytic diffeomorphism of the line fixing the origin is a distortion element of the group $\mathrm{Diff}^1_+(\mathbb{R},0)$.
\end{prop}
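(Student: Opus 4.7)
The plan is to reduce the general case to a single ramified-affine model by a $C^1$-conjugacy (using Theorem~B'), and then to exhibit distortion of the model via the same dilation trick that already proves distortion of $g(x) = x/(1+x)$ in the paragraph preceding the statement.

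First, since distortion is invariant under taking inverses, I may replace $f$ by $f^{-1}$ if necessary and assume that $f$ is contracting. Being a nontrivial parabolic real-analytic germ, $f$ is then exactly $\ell$-tangent to the identity for some $\ell \geq 1$. As a model I would introduce the time-$1$ map of the vector field $-x^{\ell+1}\partial_x$, that is,
$$g_{\ell}(x) := \frac{x}{(1+\ell x^{\ell})^{1/\ell}},$$
which is a contracting real-analytic germ exactly $\ell$-tangent to the identity, with iterates $g_{\ell}^n(x) = x\,(1+\ell n x^{\ell})^{-1/\ell}$. A direct computation shows that the homothety $h(x) := 2^{-1/\ell}\, x$ satisfies $h\, g_{\ell}\, h^{-1} = g_{\ell}^{2}$, whence by induction $g_{\ell}^{2^m} = h^m g_{\ell} h^{-m}$ for every $m \geq 0$. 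Writing an arbitrary $n \geq 1$ in binary as a sum of at most $\lfloor\log_2 n\rfloor + 1$ distinct powers of $2$, we conclude that $g_{\ell}^n$ can be written as a word of length $O((\log n)^2) = o(n)$ in the alphabet $\{g_{\ell}^{\pm 1}, h^{\pm 1}\}$. Thus $g_{\ell}$ is a distortion element of the finitely generated subgroup $\langle g_{\ell}, h\rangle$, which is contained in $\mathrm{Diff}^{\omega}_+(\mathbb{R},0)\subset \mathrm{Diff}^{1}_+(\mathbb{R},0)$.

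Second, I would invoke Theorem~B' in its weakest regularity range, namely $r = 1$. Both $f$ and $g_{\ell}$ belong to $\mathrm{Diff}^{\ell + 2}_+(\mathbb{R},0)$ (in fact to $\mathrm{Diff}^{\omega}_+(\mathbb{R},0)$), are exactly $\ell$-tangent to the identity, and are contracting. Theorem~B' therefore yields a germ $\phi \in \mathrm{Diff}^{1}_+(\mathbb{R},0)$ with $\phi f \phi^{-1} = g_{\ell}$. Conjugating the generating set above, we get $f^n = \phi^{-1}\, g_{\ell}^n\, \phi$, which is a word of length $o(n)$ in the alphabet $\{f^{\pm 1}, (\phi^{-1} h \phi)^{\pm 1}\}$. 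Hence $f$ is distorted inside the finitely generated subgroup $\langle f,\, \phi^{-1} h \phi\rangle$ of $\mathrm{Diff}^{1}_+(\mathbb{R},0)$, as claimed.

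The main substantive ingredient is Theorem~B' with $r = 1$: without it, no dilation-type self-similarity of $f$ itself is available, and the entire reduction to the ramified-affine model collapses. Once that $C^{1}$-conjugacy is granted, the remaining arithmetic estimate is routine. It is worth stressing that the enlargement of the ambient group from $\mathrm{Diff}^{\omega}_+(\mathbb{R},0)$ to $\mathrm{Diff}^{1}_+(\mathbb{R},0)$ is essential in this argument: the auxiliary generator $\phi^{-1} h \phi$ is a priori only $C^{1}$, and whether the analogous statement holds inside the analytic group itself is precisely the open question raised just after the proposition.
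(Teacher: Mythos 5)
Your proposal is correct and follows essentially the same route as the paper: reduce to the ramified affine model $g_{\ell}(x)=x\,(1+\ell x^{\ell})^{-1/\ell}$ by a $C^1$ conjugacy and then exploit the dilation relation $h\,g_{\ell}\,h^{-1}=g_{\ell}^{2}$ to get sublinear (indeed logarithmic-type) word length for $g_{\ell}^{n}$. The only differences are cosmetic: the paper obtains the $C^1$ conjugacy from its vector-field criterion (Proposition \ref{p:Takens}) rather than from Theorem B' with $r=1$, and it gets an $O(\log n)$ bound where your cruder $O((\log n)^2)$ estimate is equally sufficient for distortion.
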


\begin{proof}  
Let $f$ be such a germ and $\ell$ its order of contact with the identity.  By Proposition \ref{p:Takens}, 
$f$ is $C^1$ conjugated to a germ $g$ of the form
$$g(x) := \frac{x}{\sqrt[\ell]{1 \pm x^{\ell}}} = x \mp \frac{x^{\ell +1}}{\ell} + \ldots .$$
%where the sign is positive (resp. negative) if $f$ is contracting (resp. expanding). 
We are hence left to showing that $g$ is a distortion element. But as above, this follows from the relation 
\, $h_\ell \, g \, h_\ell^{-1} = g^2$, \, where $h_\ell (x) := \frac{x}{2^{{1/\ell}}}$.
\end{proof}

We do not know whether the germ $f$ above is a distortion element of the smaller group $\mathrm{Diff}^2_+(\mathbb{R},0)$. A first difficulty of this question lies in that one cannot deduce distortion from a relation of type (\ref{eq:affine-flow}) because of the nonvanishing of the iterative residue of $f$, as proven below.

\begin{prop} 
Let $g$ be a germ of $C^3$ diffeomorphism fixing the origin that is exactly $1$-tangent to the identity. If $g$ is $C^2$ conjugate to some element $g^t \in \mathrm{Diff}_+^3 (\mathbb{R},0)$ of its flow, 
with $t \neq 1$, then $\mathrm{Resit}(g)$ vanishes.
\end{prop}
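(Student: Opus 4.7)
The plan is to combine the two main ingredients already in hand: the conjugacy invariance of the iterative residue under $C^{\ell+1}$ conjugacies (Theorem \ref{t:A}), and the behavior of $\mathrm{Resit}$ under the flow given by equation (\ref{eq:resit-flow}) (whose extended validity for germs that merely have $(2\ell+1)$ derivatives at the origin is pointed out in Remark \ref{rem:resit-flow}). Here $\ell = 1$, so both $C^{\ell+1} = C^2$ and $C^{2\ell+1} = C^3$ match precisely the regularity hypotheses in the statement.

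First I would observe that the orientation-preserving $C^2$ conjugacy $h$ with $hgh^{-1} = g^t$ forces $g$ and $g^t$ to be either both expanding or both contracting. Since $g$ is exactly $1$-tangent to the identity, it is either expanding or contracting; the element $g^t$ in the flow is of the same type as $g$ precisely when $t > 0$. Thus the assumption $t \neq 1$ together with the existence of a $C^2_+$ conjugacy forces $t > 0$ and $t \neq 1$, so in particular $|t| = t \neq 1$. Moreover, $g^t$ is exactly $1$-tangent to the identity as well (this is immediate from $g^t(x) = x + t X(x) + o(x^2)$ or from equation (\ref{eq-vf})).

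Next I would apply Theorem \ref{t:A} with $\ell = 1$, $f := g$ and ``$g$'' $:= g^t$: both are parabolic germs in $\mathrm{Diff}^3_+(\mathbb{R},0)$ exactly $1$-tangent to the identity, and they are conjugate by $h \in \mathrm{Diff}^2_+(\mathbb{R},0)$, so
\[
\mathrm{Resit}(g) \,=\, \mathrm{Resit}(g^t).
\]
Then I would invoke the flow identity (\ref{eq:resit-flow}) (valid here because $g^t$ has $3 = 2\ell+1$ derivatives at $0$, see the paragraph at the end of Remark \ref{rem:resit-flow}):
\[
\mathrm{Resit}(g^t) \,=\, \frac{\mathrm{Resit}(g)}{|t|} \,=\, \frac{\mathrm{Resit}(g)}{t}.
\]
Combining the two equalities gives $\mathrm{Resit}(g)\bigl(1 - 1/t\bigr) = 0$, and since $t \neq 1$ we conclude $\mathrm{Resit}(g) = 0$.

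The proof is essentially a one-line deduction once the two earlier statements are in place, so there is no serious obstacle. The only delicate point worth double-checking is that one is entitled to apply (\ref{eq:resit-flow}) in this finite-regularity setting: the hypothesis $g^t \in \mathrm{Diff}^3_+(\mathbb{R},0)$ is exactly what Remark \ref{rem:resit-flow} requires, and the assumption of orientation-preserving conjugacy is what rules out the potentially problematic value $t = -1$ (for which $|t| = 1$ would give no information).
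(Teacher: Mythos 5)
Your proposal is correct and follows essentially the same route as the paper: invariance of $\mathrm{Resit}$ under the $C^2$ conjugacy, combined with the flow identity (\ref{eq:resit-flow}) as extended in Remark \ref{rem:resit-flow}, yielding $\mathrm{Resit}(g) = \mathrm{Resit}(g)/|t|$, with the case $|t|=1$, $t\neq 1$ excluded because an orientation-preserving conjugacy cannot exchange contracting and expanding germs. The only cosmetic difference is that you rule out $t<0$ at the outset, whereas the paper dismisses the single residual case $t=-1$ at the end.
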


\begin{proof} 
Since $\mathrm{Resit} (g)$ is invariant under $C^2$ conjugacy, by (\ref{eq:resit-flow}) 
(see also Remark (\ref{rem:resit-flow})), one has
$$\mathrm{Resit} (g) = \mathrm{Resit} (g^t) = \frac{\mathrm{Resit}(g)}{|t|}.$$
This easily implies $\mathrm{Resit} (g) = 0$ if $t \neq 1$. The case $t = -1$ is impossible since 
a contracting diffeomorphisms cannot be conjugated to a expanding one.
\end{proof}

To finish, let us mention that we do not even know whether the germ $f$ above is a distortion element of the group $\widehat{\mathrm{Diff}} (\mathbb{R},0)$ of formal germs of diffeomorphisms. More generally, the next question seems challenging:

\begin{qs} 
Given a field $\mathbb{K}$, what are the distortion elements of $\ger$~? 
What about $\widehat{\mathrm{Diff}} (\mathbb{Z},0)$~?
\end{qs}
%%%%%%%%%%%%%%%%%%%%%%%%%%%%%%%%%%%%%%%%%%%%%%%%%%%%%%%%%%%%%%%%%%%%%%%%%%

\vspace{0.3cm}

\noindent{\bf Acknowledgments.} 
H\'el\`ene Eynard-Bontemps was funded by the IRGA project ADMIN of UGA and the CNRS in the context of a d\'el\'egation. She would like to thank CMM (FB210005, BASAL funds for centers of excellence from ANID-Chile)\,/\,U. de Chile for the hospitality during this 
period and Michele Triestino for inspiring discussions related to this work. Andr\'es Navas was funded by Fondecyt Research Project 1220032, and would like to thank Adolfo Guillot, Jan Kiwi and Mario Ponce for useful discussions and insight concerning residues of parabolic germs in the complex setting.

%%%%%%%%%%%%%%%%%%%%%%%%%%%%%%%%%%%%%%%%%%%%%%%%%%%%%%%%%%%%%%%%%%%%%%%%%%

\vspace{0.5cm}

\begin{small}

\end{small}

\begin{footnotesize}

\vspace{0.42cm}

\noindent {\bf H\'el\`ene Eynard-Bontemps} \hfill{\bf Andr\'es Navas}

\noindent Universit\'e Grenoble Alpes, CNRS\hfill{ Dpto. de Matem\'atica y C.C.}

\noindent  Institut Fourier  \hfill{ Universidad de Santiago de  Chile}

\noindent 38000 Grenoble \hfill{Alameda Bernardo O'Higgins 3363}

\noindent France \hfill{Estaci\'on Central, Santiago, Chile} 

\noindent helene.eynard-bontemps@univ-grenoble-alpes.fr \hfill{andres.navas@usach.cl}

\noindent \& 
%\hfill \&

\noindent Center for Mathematical Modeling 
%\hfill Dep. de Matem\'aticas

\noindent FCFM, Universidad de Chile 
%\hfill Fac. de Ciencias, Universidad de Chile

\noindent Santiago, Chile 
%\hfill Santiago, Chile

\end{footnotesize}

\end{document}